\theoremstyle{plain}
\newtheorem{thm}{Theorem}[section]
\newtheorem{lemma}[thm]{Lemma}
\newtheorem{prop}[thm]{Proposition}
\newtheorem{corollary}[thm]{Corollary}
\theoremstyle{definition}
\newtheorem{remark}{Remark}
\newtheorem{ex}[thm]{Example}
\def\d{{\fam0 d}}
\def\loc{\operatorname{loc}}
 \def\supp{\operatorname{supp}}
 \def\BMO{\operatorname{BMO}}
\def\Xint#1{\mathchoice
   {\XXint\displaystyle\textstyle{#1}}%
   {\XXint\textstyle\scriptstyle{#1}}%
   {\XXint\scriptstyle\scriptscriptstyle{#1}}%
   {\XXint\scriptscriptstyle\scriptscriptstyle{#1}}%
   \!\int}
\def\XXint#1#2#3{{\setbox0=\hbox{$#1{#2#3}{\int}$}
     \vcenter{\hbox{$#2#3$}}\kern-.5\wd0}}
\def\dashint{\Xint-}
\newtoks\by
\newtoks\paper
\newtoks\book
\newtoks\jour
\newtoks\yr
\newtoks\pages
\newtoks\vol
\newtoks\publ
\newtoks\eds
\newtoks\proc
\def\ota{{\hbox{???}}}
\def\cLear{\by=\ota\paper=\ota\book=\ota\jour=\ota\yr=\ota
\pages=\ota\vol=\ota\publ=\ota}
\def\endpaper{\the\by, \textit{\the\paper},
{\the\jour}  {\the\vol} (\the\yr), \the\pages.\cLear}
\def\endbook{\the\by, \textit{\the\book}, \the\publ.\cLear}
\def\endprep{\the\by, \textit{\the\paper}, \the\jour.\cLear}
\def\endproc{\the\by, \textit{\the\paper}, \the\publ, \the\pages.\cLear}
\def\name#1#2{#1 #2}
\numberwithin{equation}{section}
\begin{document}

\title{Higher-order Sobolev embeddings into spaces of Campanato and  Morrey type}

\begin{abstract} Necessary and sufficient conditions are offered for  Sobolev type spaces built on 
  re\-ar\-ran\-ge\-ment-in\-va\-ri\-ant spaces   
to be continuously
embedded into (generalized) Campanato and Morrey spaces on 
  open subsets of the $n$-dimensional Euclidean space. As a consequence, the optimal target and domain spaces in the relevant embeddings are identified. Our general criteria are implemented to derive sharp embeddings in the class of Orlicz-Sobolev spaces.
\end{abstract}

\author{Paola Cavaliere, Andrea Cianchi,  Lubo\v s Pick and Lenka Slav\'{\i}kov\'a}

\address{Dipartimento di Matematica\\ Universit\`a di Salerno\\ Via Giovanni Paolo II, 84084 Fisciano (SA), Italy}
 \email{pcavaliere@unisa.it}

\address{Dipartimento di Matematica e Informatica \lq \lq U. Dini''\\
Universit\`a di Firenze\\  Viale Morgagni 57/A, 50134 Firenze,
Italy}
 \email{cianchi@unifi.it}

\address{Department of Mathematical Analysis
\\  Faculty of Mathematics and Physics,
Charles University
\\  Sokolovsk\'a~83,
186~75 Praha~8,
Czech Republic}
\email{pick@karlin.mff.cuni.cz}

\address{Department of Mathematical Analysis
\\  Faculty of Mathematics and Physics,
Charles University
\\  Sokolovsk\'a~83,
186~75 Praha~8,
Czech Republic}
\email{slavikova@karlin.mff.cuni.cz}


\subjclass[2000]{46E35, 46E30.}
\keywords{Sobolev embeddings; rearrangement-invariant spaces; generalized Campanato spaces;  generalized Morrey spaces}

\maketitle

\section{Introduction}\label{Intro}
Morrey and Campanato spaces of functions defined in open sets of $\mathbb R^n$, with $n \geq 2$, were introduced in view of applications to the regularity theory of solutions to partial differential equations. They are nowadays
 ubiquitous in the modern developments of this theory.   Loosely speaking, Morrey norms measure the degree of decay of norms of functions over balls when their radius approaches zero, whereas Campanato (semi-)norms provide information on the degree of decay over balls of the oscillation of functions in norm. Unlike several customary Banach spaces of functions, these two families of spaces are not rearrangement-invariant, since their norms or semi-norms do not just depend on global integrability properties of functions. The vast literature on various aspects of the theory of these spaces includes the papers \cite{AX, AlS, AlS2, BDVS,  Bur:24, Bur:22, BuDa, Ca0, Ca, DaGi,  DDY, Dev:84, HO, JN, Man:21, Samko} and the  monographs \cite{Korenobook, Stbook, Trie}.

The present paper addresses the question of how integrability properties of weakly differentiable functions are reflected in their membership in a Morrey or a Campanato space. More precisely, we are concerned with embeddings of Sobolev spaces built upon general rearrangement-invariant norms into Morrey and Campanato type spaces.  The results to be presented complement earlier contributions in the same vein, where a comprehensive picture of embeddings of rearrangement-invariant norm-based Sobolev spaces into rearrangement-invariant spaces~\cite{CPS, EKP, KP1} and spaces of uniformly continuous functions~\cite{Monia1} was offered.

Embeddings of classical Sobolev spaces, corresponding to the case when the defining re\-ar\-ran\-ge\-ment-invariant norms are of Lebesgue type, into Morrey and Campanato spaces are available in the literature. On the other hand, little seems to be known when less standard rearrangement-invariant norms, such as Orlicz norms, are considered.

Dealing with embeddings for Sobolev spaces of such a general form naturally calls for Morrey and Campanato target spaces from broader classes than those customarily employed. These extended classes include Morrey and Campanato type spaces whose norms are defined via a decay on balls which is not necessarily polynomial in their radius. Generalized Morrey and Campanato spaces were introduced and thoroughly analyzed in~\cite{Sp}. They are of use, for instance, in the description of sharp assumptions and regularity properties of solutions to nonlinear elliptic problems -- see e.g. \cite{BCDS, CW, CiSchw}. The function that defines the relevant general decay will be denoted by~$\varphi$, and the corresponding Morrey and Campanato spaces on an open set $\Omega \subset \mathbb R^n$ by $\mathcal M^{\varphi(\cdot)}(\Omega)$ and $\mathcal L^{\varphi(\cdot)}(\Omega)$, respectively. In the case when $\varphi$ is a power, these spaces reproduce the classical ones.  The notation~$W^mX(\Omega)$ is employed for the $m$-th order Sobolev space of those functions which belong, together with their derivatives up to the order~$m$, to the rearrangement-invariant space~$X(\Omega)$.

A basic version of the embeddings under examination reads
\begin{equation}\label{nov1}
W^mX(\Omega) \to \mathcal M^{\varphi(\cdot)} (\Omega)
\end{equation}
and
\begin{equation}\label{nov2}
W^mX(\Omega) \to \mathcal L^{\varphi(\cdot)}(\Omega).
\end{equation}
Necessary and sufficient conditions on $m$, $X(\Omega)$ and $\varphi$ for~\eqref{nov1} and~\eqref{nov2} to hold are exhibited under minimal regularity assumptions on $\Omega$. As a consequence, fixing $m$, they enable us to detect the optimal target~$\mathcal M^{\varphi(\cdot)} (\Omega)$ in~\eqref{nov1} and~$\mathcal L^{\varphi(\cdot)}(\Omega)$ in~\eqref{nov2} for a given domain space~$X(\Omega)$, and conversely, the optimal domain~$X(\Omega)$ for given targets~$\mathcal M^{\varphi(\cdot)} (\Omega)$ and~$\mathcal L^{\varphi(\cdot)}(\Omega)$.

These questions in the case of first-order Sobolev spaces were discussed in~\cite{CPcamp}. The focus here is on the higher-order case when $m \geq 2$. The conclusions in this range of values of $m$ disclose new traits, which cannot be seen in the first-order case. 

Novelties also surface when embeddings into Campanato spaces~$\mathcal L^{k, \varphi(\cdot)}(\Omega)$ of higher integer order~$k$  are taken into account. The latter are defined in terms of the decay on balls of the deviation in norm from polynomials of degree at most~$k$, the basic spaces~$\mathcal L^{\varphi(\cdot)}(\Omega)$ being thus recovered for $k=0$.
The same problems outlined with regard to the embedding~\eqref{nov2} are tackled and solved  in connection with embeddings of the form
\begin{equation}\label{nov3}
W^mX(\Omega) \to \mathcal L^{k, \varphi(\cdot)}(\Omega).
\end{equation}

Some features of our results are worth being pointed out.
 The embeddings into the Morrey spaces~$\mathcal M^{\varphi(\cdot)}(\Omega)$ are equivalent to embeddings into Marcinkiewicz spaces associated with the same function $\varphi$. The latter spaces are defined analogously to~$\mathcal M^{\varphi(\cdot)}(\Omega)$, save that the role of balls is played by arbitrary measurable sets. Such an equivalence is non-obvious, since Marcinkiewicz spaces are strictly contained in the Morrey spaces modeled upon the same function~$\varphi$, except for very specific choices of $\varphi$.
 
  In contrast to the case of classical Sobolev and Campanato spaces, an embedding of the form~\eqref{nov2}, with $\varphi$ vanishing at zero, does not necessarily imply a corresponding embedding into the space of continuous functions with modulus of continuity~$\varphi$. Moreover, even if~$W^mX(\Omega)$ is embedded into a space of continuous functions, the optimal modulus of continuity can be different (weaker) than~$\varphi$. These peculiarities typically emerge when the function $\varphi$ has a very slow decay at zero -- of logarithmic type, for instance.

Sobolev embeddings into vanishing versions of the Morrey and Campanato spaces are described as well. These spaces are defined by replacing the boundedness with the decay to zero, as the radius of balls approaches zero, in the definitions of Morrey and Campanato spaces. In the case of Campanato spaces, they generalize the notion of the  Sarason space ${\rm VMO}(\Omega)$ of functions of vanishing mean oscillation, corresponding to the choice $\varphi=1$, to an arbitrary function $\varphi$.

Finally, the principles established in a general framework are implemented to derive optimal embeddings in the important instance of Orlicz-Sobolev spaces, which arise when $X(\Omega)$ is an Orlicz space. Although not the most general one, this class of Sobolev spaces is much richer than that of the standard Sobolev spaces and apt to reveal the novel peculiarities we alluded to above.

\section{Main results}\label{mainres}

Our embeddings hold in any bounded John domain in $\mathbb R^n$, with $n \geq 2$. They constitute a broad class where customary Sobolev type embeddings hold.
Recall that an open set $\Omega \subset \mathbb R^n$ is called 
a~\emph{John domain}~if there exist positive constants $c$ and $\ell$, and a
point $x_0 \in \Omega$ such that for every $x \in \Omega$ there exists a
rectifiable curve $\gamma \colon [0, \ell] \to \Omega$, parameterized by
arclength, such that $\gamma (0)=x$, $\gamma (\ell) = x_0$, and
$${\rm dist}\, (\gamma (r) , \partial \Omega ) \geq c \, r \quad \ 
\hbox{for} \ r \in [0, \ell].$$
Any bounded open set
satisfying the uniform
interior cone condition is a bounded John domain. In particular, bounded Lipschitz domains are bounded John domains.
For simplicity of notation, throughout the paper we assume, unless otherwise stated, that $|\Omega|=1$, where $|\cdot|$ stands for the Lebesgue measure in $\mathbb R^n$.

We denote by $X(\Omega)$ the rearrangement-invariant space on $\Omega$ associated with a
rearrangement-invariant function norm $\|\cdot\|_{X(0,1)}$. Roughly speaking, the latter is a functional, defined on the set $L^0_+(0,1)$ of nonnegative measurable functions $f$ in $(0,1)$, which enjoys standard properties of norms and only depends on their decreasing rearrangement~$f^*$.
As already mentioned in Section~\ref{Intro}, given $m \in \mathbb N$, the notation $W^mX(\Omega)$ stands for the Sobolev space of those functions which belong, together with their weak derivatives up to the order $m$, to~$X(\Omega)$.  More precise definitions can be found in the next section.

The generalized Morrey and Campanato spaces  are built upon  \emph{admissible}~functions
 $\varphi\colon(0,\infty)\to(0,\infty)$, in the sense that
\begin{equation}\label{E:admissible-new}
    \inf_{r \in [a,\infty)} \varphi(r)>0\quad \ 
\hbox{for} \ a\in (0,\infty).
\end{equation}
Their definition is given below. Let us point out here that
in the  classical framework when~$\varphi$ is a power,   either~$\mathcal M^{\varphi(\cdot)}(\Omega) =\mathcal L^{\varphi(\cdot)}(\Omega)$ (negative power) or~$\mathcal M^{\varphi(\cdot)}(\Omega)=\{0\}$ (positive power). 
Moreover, in the latter case, $\mathcal L^{\varphi(\cdot)}(\Omega)$  agrees with a space of H\"older continuous functions.
The only nontrivial relation between the two spaces associated with the same power occurs in the borderline case when such a  power is $0$, and hence $\varphi =1$, in which case $\mathcal L^{\varphi(\cdot)}(\Omega)= {\rm BMO}(\Omega) \supsetneq L^\infty(\Omega) = \mathcal M^{\varphi(\cdot)}(\Omega)$, where ${\rm BMO}(\Omega)$ denotes the John-Nirenberg space of functions with bounded mean oscillation.

The use of more general admissible functions $\varphi$ uncovers a refined picture of possible inclusion relations of Morrey, Campanato and H\"older type spaces. This is apparent when admissible functions $\varphi$ with a slow decay or growth near $0$ are involved. In particular, as already mentioned in Section~\ref{Intro}, the mere vanishing of $\varphi$ at $0$ does not guarantee that~$\mathcal L^{\varphi(\cdot)}(\Omega)$ only consists of continuous, or even bounded, functions. This issue is discussed in Subsection~\ref{S:comp}. In this connection, see also 
 the papers~\cite{Sp} and~\cite{CPcamp}.

\subsection{Embeddings into Morrey spaces}
\label{S:morrey}

Given an admissible function $\varphi$, we define the \emph{Morrey space} $\mathcal M^{\varphi(\cdot)}(\Omega)$ as the collection of all measurable functions $u: \Omega \to \mathbb R$  which make the norm
\begin{equation}\label{morreydef}
    \|u\|_{\mathcal M^{\varphi(\cdot)}(\Omega)}=\sup_{B\subset \Omega} \, \frac{1}{\varphi(|B|^{\frac{1}{n}})} \, \dashint_{B}|u|\,\d x
\end{equation}
finite. Here, $B$ denotes an open ball in $\mathbb R^n$,   and $\dashint$ stands for integral average.   When \begin{align}\label{power}
    \varphi (r)=r^\alpha,
\end{align} we shall use the abridged notation $\mathcal M^\alpha (\Omega)$
for $\mathcal M^{\varphi(\cdot)}(\Omega)$. Let us warn that  the presence of the integral average on the right-hand side of \eqref{morreydef} causes a shift by  $n$ in the exponent of the space $\mathcal M^\alpha (\Omega)$ if compared with the customary notation for classical Morrey spaces.

 As already mentioned in Section~\ref{Intro}, Morrey spaces share the same properties as Marcinkiewicz spaces, associated with the same function $\varphi$, as targets for embeddings of $W^mX(\Omega)$. Recall that the
Marcinkiewicz function norm $\|\cdot\|_{\mathfrak M^{\varphi(\cdot)}(\Omega)}$ associated with $\varphi$ is obtained by just replacing balls by arbitrary measurable sets $E\subset \Omega$ on the right-hand side of equation~\eqref{morreydef}. Namely,
\begin{equation}\label{marcdef}
    \|u\|_{\mathfrak M^{\varphi(\cdot)}(\Omega)}=\sup_{E\subset \Omega}  \, \frac{1}{\varphi(|E|^{\frac{1}{n}})}\, \dashint_{E}|u|\,\d x.
\end{equation}
Unlike $\mathcal M^{\varphi(\cdot)}(\Omega)$, the space $\mathfrak M^{\varphi(\cdot)}(\Omega)$ is always rearrangement-invariant. Also,
\begin{equation}\label{marc-mor}
\mathfrak M^{\varphi(\cdot)}(\Omega)\to\mathcal M^{\varphi(\cdot)}(\Omega),\end{equation}
and, for most choices of $\varphi$, one has that
$\mathfrak M^{\varphi(\cdot)}(\Omega) \neq \mathcal M^{\varphi(\cdot)}(\Omega)$.  
When  $\varphi$ is of power type \eqref{power} for some  $\alpha \in (-n,0)$, then  $\mathfrak M^{\varphi(\cdot)}(\Omega)=L^{-\frac n    \alpha, \infty}(\Omega)$,  the weak Lebesgue space.  If $\alpha \leq -n$, then $\mathfrak M^{\varphi(\cdot)}(\Omega)=L^1(\Omega)$, and if $\alpha =0$, then $\mathfrak M^{\varphi(\cdot)}(\Omega)=L^\infty(\Omega)$. When $\alpha >0$, trivially $\mathfrak M^{\varphi(\cdot)}(\Omega)=\{0\}$.

\medskip

We begin with a necessary and sufficient condition for an arbitrary-order Sobolev space to be embedded into a Morrey space.

\begin{thm}{\rm\bf{ [Criterion for embeddings into Morrey spaces]}}\label{T:2}
    Assume that $\Omega$ is a bounded John domain in $\mathbb R^n$, and  $m \in \mathbb N$ is such that $m \leq n -1$.   Let $\|\cdot \|_{X(0,1)}$ be a~rearrangement-invariant function norm, and let $\varphi$ be an admissible function.
    Then, the following statements are equivalent.
    \begin{enumerate}[(i)]
        \item The  embedding
         \begin{equation}\label{E:SO-MAemb}
            W^{m}X(\Omega) \to \mathcal M^{\varphi(\cdot)}(\Omega)
        \end{equation}
        holds.
        \item The embedding
\begin{equation}\label{E:SO-MARCemb}
            W^{m}X(\Omega) \to  
            \mathfrak M^{\varphi(\cdot)}(\Omega)
        \end{equation}
        holds.
        \item   We have that
         \begin{equation}\label{E:7}
            \sup_{r \in (0,1)} \, \frac{1}{\varphi( r)}\left\| s^{-1+\frac{m}{n}}\chi_{{(r^n,1)}}(s)\right\|_{{X'(0,1)}} <\infty.
        \end{equation}
    \end{enumerate}
\end{thm}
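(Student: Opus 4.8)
The plan is to prove the circle of implications (iii) $\Rightarrow$ (ii) $\Rightarrow$ (i) $\Rightarrow$ (iii). The step (ii) $\Rightarrow$ (i) is immediate from the embedding~\eqref{marc-mor}. As a byproduct, the a priori non-obvious equivalence (i) $\Leftrightarrow$ (ii) will emerge only once both embeddings have been tied to the analytic condition~\eqref{E:7}; throughout, the assumption $m\le n-1$ is what places us in the ``Morrey regime'' $m<n$, in which the weight $s^{\frac mn-1}$ occurring in~\eqref{E:7} is decreasing and integrable on $(0,1)$.

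For (iii) $\Rightarrow$ (ii), I would first record the rearrangement description of the Marcinkiewicz norm,
\[
\|u\|_{\mathfrak M^{\varphi(\cdot)}(\Omega)}=\sup_{t\in(0,1)}\frac{u^{**}(t)}{\varphi(t^{\frac1n})},
\]
which holds because the supremum of $\dashint_E|u|\,\d x$ over measurable sets $E\subset\Omega$ of measure $t$ equals $u^{**}(t)$. I would then invoke the pointwise rearrangement estimate for Sobolev functions valid on bounded John domains, of the form
\[
u^{**}(t)\le C\Bigl(\int_t^1 s^{\frac mn-1}|\nabla^m u|^{**}(s)\,\d s+\|u\|_{X(\Omega)}\Bigr),\qquad t\in(0,1).
\]
Since~\eqref{E:7} together with the admissibility of $\varphi$ forces $\inf_{r\in(0,1)}\varphi(r)>0$, the lower-order term is dominated by $\|u\|_{W^mX(\Omega)}$ after division by $\varphi(t^{1/n})$. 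For the leading term, a Hardy-type inequality reduces $|\nabla^m u|^{**}$ to $|\nabla^m u|^{*}$ at the cost of an extra summand $t^{\frac mn-1}\int_0^t|\nabla^m u|^{*}\,\d s$; Hölder's inequality in the duality $X(0,1)$--$X'(0,1)$ bounds $\int_t^1 s^{\frac mn-1}|\nabla^m u|^{*}\,\d s$ by $\bigl\|s^{\frac mn-1}\chi_{(t,1)}\bigr\|_{X'(0,1)}\|\nabla^m u\|_{X(\Omega)}$, while the extra summand is controlled through the elementary inequality $t^{\frac mn-1}\|\chi_{(0,t)}\|_{X'(0,1)}\le 2^{1-\frac mn}\bigl\|s^{\frac mn-1}\chi_{(t,1)}\bigr\|_{X'(0,1)}$ (obtained by replacing $\chi_{(0,t)}$ with the equimeasurable $\chi_{(t,2t)}$). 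Dividing by $\varphi(t^{1/n})$, taking the supremum over $t$, and substituting $t=r^n$ then yields~\eqref{E:SO-MARCemb} with constant controlled by the quantity in~\eqref{E:7}.

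For (i) $\Rightarrow$ (iii), since the part of the supremum in~\eqref{E:7} with $r$ bounded away from $0$ is automatically finite (by the admissibility of $\varphi$ and the finiteness of $\|\chi_{(0,1)}\|_{X'(0,1)}$), it suffices to produce, for each small $r$, a function $u\in W^mX(\Omega)$ with $\|u\|_{W^mX(\Omega)}\le C$ for which~\eqref{E:SO-MAemb} witnesses the value $\varphi(r)^{-1}\bigl\|s^{\frac mn-1}\chi_{(r^n,1)}\bigr\|_{X'(0,1)}$. The decisive ingredient is the equivalence, \emph{uniform} in $t\in(0,1)$, between $\bigl\|s^{\frac mn-1}\chi_{(t,1)}\bigr\|_{X'(0,1)}$ and $\sup\{\int_t^1 s^{\frac mn-1}\psi(s)\,\d s:\ \psi\text{ non-increasing on }(0,1),\ \|\psi\|_{X(0,1)}\le1\}$; that is, decreasing trial functions are as efficient as arbitrary ones when paired with the weight $s^{\frac mn-1}\chi_{(t,1)}$. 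Granting this, I would choose a near-optimal non-increasing $\psi$ with $\|\psi\|_{X(0,1)}\le1$ and feed it into a standard radial building block: a function $u$ supported in a fixed ball $B_0\subset\Omega$, radially non-increasing about its centre $x_0$, with $|\nabla^m u|^{*}\lesssim\psi$, $\|u\|_{W^mX(\Omega)}\le C$, and $u^{*}(\tau)\gtrsim\int_\tau^1 s^{\frac mn-1}\psi(s)\,\d s$ for small $\tau$. Testing~\eqref{E:SO-MAemb} on $u$ with the ball $B$ of measure $r^n$ centred at $x_0$ then gives
\[
\|u\|_{\mathcal M^{\varphi(\cdot)}(\Omega)}\ge\frac1{\varphi(r)}\dashint_{B}|u|\,\d x\ge\frac{u^{*}(r^n)}{\varphi(r)}\gtrsim\frac1{\varphi(r)}\bigl\|s^{\frac mn-1}\chi_{(r^n,1)}\bigr\|_{X'(0,1)},
\]
and, the left-hand side being $\lesssim\|u\|_{W^mX(\Omega)}\le C$, taking the supremum over $r$ delivers~\eqref{E:7}.

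I expect the main obstacle to be the uniform-in-$t$ equivalence used in the last step. Symmetrizing a near-extremal trial function for $\bigl\|s^{\frac mn-1}\chi_{(t,1)}\bigr\|_{X'(0,1)}$ in the obvious way displaces too much mass towards the origin and only recovers~\eqref{E:7} with $\varphi$ weakened by a logarithmic factor; the correct device is to replace the weight by its level function --- the derivative of the least concave majorant of its primitive --- which here is comparable to $\min\{t^{\frac mn-1},s^{\frac mn-1}\}$ and dominates a fixed multiple of the original weight, so that a level-function (down-space) duality argument yields the equivalence with constants depending only on $m$ and $n$. Secondary technical points are the construction of the radial building block with \emph{all} intermediate-order derivatives --- not only $\nabla^m u$ --- under control in $X(\Omega)$, and the routine passages between the $*$ and $**$ rearrangements in the sufficiency direction.
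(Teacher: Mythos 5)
Your implication cycle (iii) $\Rightarrow$ (ii) $\Rightarrow$ (i) $\Rightarrow$ (iii) is sound, and your treatment of (iii) $\Rightarrow$ (ii) --- writing the Marcinkiewicz norm as $\sup_t u^{**}(t)/\varphi(t^{1/n})$, invoking the pointwise rearrangement estimate, reducing $**$ to $*$ by Fubini, and absorbing the boundary term $t^{\frac mn -1}\int_0^t|\nabla^m u|^*$ via the equimeasurable replacement $\chi_{(0,t)}\mapsto\chi_{(t,2t)}$ --- is a legitimate alternative to the paper's route. The paper instead quotes the optimal embedding $W^mX(\Omega)\to X_m(\Omega)$, computes the fundamental function of $X_m$ with Lemma~\ref{L:interval-norm}, and uses that the Marcinkiewicz space is the largest r.i.~space with a given fundamental function. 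Both rest on the same Kerman--Pick/CPS reduction, and yours is more explicit but equivalent.

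Where you part ways with the paper is (i) $\Rightarrow$ (iii), and here you have manufactured a difficulty that the paper avoids. You restrict your trial data to \emph{non-increasing} $\psi$ and are then forced to prove a uniform-in-$t$ equivalence between $\|s^{\frac mn-1}\chi_{(t,1)}\|_{X'(0,1)}$ and $\sup\{\int_t^1 s^{\frac mn-1}\psi\colon \psi\downarrow,\ \|\psi\|_{X}\le 1\}$, for which you propose a level-function/down-space duality argument. The paper's test function
\[
v_f(x)=\int_{\omega_n|x|^n}^1 r^{-m+\frac mn}f(r)(r-\omega_n|x|^n)^{m-1}\,\d r
\]
is defined for \emph{every} $f\in L^0_+(0,1)$, and the Kerman--Pick construction gives $\|v_f\|_{W^mX(\Omega)}\lesssim\|f\|_{X(0,1)}$ without any monotonicity assumption on $f$; since by definition $\|s^{\frac mn-1}\chi_{(t,1)}\|_{X'(0,1)}$ is the supremum of $\int_t^1 s^{\frac mn-1}f$ over all $f\in L^0_+$ with $\|f\|_{X}\le1$, no down-space machinery is needed at all. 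Moreover, even if you insist on decreasing trial functions, the equivalence you state follows elementarily rather than via level functions: by Hardy--Littlewood, $\int_t^1 s^{\frac mn-1}f\le \int_0^{1-t}(s+t)^{\frac mn-1}f^*(s)\,\d s$; splitting at $s=t$ and using $(s+t)^{\frac mn-1}\le t^{\frac mn-1}$ on $(0,t)$ and $(s+t)^{\frac mn-1}\le s^{\frac mn-1}$ on $(t,1-t)$ gives the bound $t^{\frac mn-1}\int_0^t f^*+\int_t^1 s^{\frac mn-1}f^*$; the second term is already a decreasing pairing, and the first is at most $t^{\frac mn-1}\|\chi_{(0,t)}\|_{X'}\|f\|_X$, which (by replacing $\chi_{(0,t)}$ with $\chi_{(t,2t)}$, exactly your own trick from the sufficiency direction) is realized by the decreasing trial function $\psi=\chi_{(0,2t)}/\|\chi_{(0,2t)}\|_X$, with constants depending only on $m/n$ --- no logarithmic loss. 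So your instinct that naive symmetrization fails is right, your diagnosis that it costs a log and requires the level function is not; and the paper sidesteps the whole issue by not symmetrizing in the first place. The paper also supplies a second, self-contained proof of (iii) $\Rightarrow$ (i) which reduces to the first-order case of~\cite{CPcamp} via the iterated embedding $W^mX\to W^1X_{m-1}$; you omit this, which is fine since the Marcinkiewicz route closes the cycle.
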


\begin{remark}\label{rem-trivialmorrey}{\rm
Although Theorem~\ref{T:2} holds without any restriction on $m$, it becomes trivial when $m \geq n$. Indeed,
since $X(\Omega) \to L^1(\Omega)$ for any rearrangement-invariant space $X(\Omega)$, the standard Sobolev embedding theorem implies that $W^mX(\Omega) \to L^{\infty}(\Omega)$~  \cite[Theorem~4.12]{AF}. On the other hand, $\big\| s^{-1+\frac{m}{n}}\big\|_{{X'(0,1)}} <\infty$ under the current assumption on $m$. Therefore,
Theorem~\ref{T:2} amounts to the equivalence of embeddings~\eqref{E:SO-MAemb} and~\eqref{E:SO-MARCemb} to the fact that $\inf_{r>0}{\varphi(r)}>0$. The latter condition is necessary and sufficient  for $\mathcal M^{\varphi(\cdot)}(\Omega)\neq\{0\}$  and $\mathfrak M^{\varphi(\cdot)}(\Omega) \neq \{0\}$. Thus, Theorem~\ref{T:2} tells us that  embeddings~\eqref{E:SO-MAemb} and~\eqref{E:SO-MARCemb} hold for every rearrangement-invariant space $X(\Omega)$ and every nontrivial Morrey and Marcinkiewicz space
$\mathcal M^{\varphi(\cdot)}(\Omega)$  and $\mathfrak M^{\varphi(\cdot)}(\Omega)$, respectively.}
\end{remark}

\begin{remark}\label{rem-nov7}{\rm If $\inf_{r>0}{\varphi(r)}=0$, then condition~\eqref{E:7} fails for any   $\Omega$, $n$,   $m$,   and $\|\cdot\|_{X(0,1)}$. This is consistent with the fact that, in this case, $\mathcal M^{\varphi(\cdot)}(\Omega)=\{0\}$.}
\end{remark}

\begin{remark}\label{rem-nov6}{\rm
Replacing $\sup_{r \in (0,1)}$ with  $\sup_{r \in (0,r_0)}$ in~\eqref{E:7}, for any $r_0\in (0,1)$, results in an equivalent condition.}
\end{remark}

\begin{remark}\label{rem-m=1}{\rm
The equivalence of assertions {\it(i)} and {\it(iii)} in Theorem \ref{T:2} was established in \cite[Theorem 1.2]{CPcamp} in the case when $\Omega$
is a cube  and $m=1$.
}
\end{remark}

The next two results rely upon Theorem~\ref{T:2} and provide us with the optimal target and the optimal domain in embeddings of the form~\eqref{E:SO-MAemb}.

\begin{thm}{\rm\bf{ [Optimal Morrey target]}} \label{T:morrey-optimal-range}
  Assume that  $\Omega$, $m$,  and $\|\cdot\|_{X(0,1)}$ are as in Theorem~\ref{T:2}. Let  $\widehat\varphi\colon(0,\infty)\to(0,\infty)$ be the function defined as
      \begin{equation}\label{E:optimal-morrey-range}
        \widehat\varphi(r) =
            \|s^{-1+\frac{m}{n}}\chi_{(r^n,1)}(s)\|_{X'(0,1)} \quad  \ \text{if} \  r\in (0,\tfrac12],
    \end{equation}
    and               $ \widehat\varphi(r) = \widehat\varphi\left(\tfrac 12\right)$
if $r\in (\tfrac12,\infty)$.
Then, $\widehat\varphi$ is admissible and
    \begin{equation}\label{E:optimal-morrey-range-embedding}
        W^m X(\Omega) \to \mathcal M^{\widehat\varphi(\cdot)}(\Omega).
    \end{equation}
 Moreover, $\mathcal M^{\widehat\varphi(\cdot)}(\Omega)$ is the optimal (smallest possible) Morrey space in~\eqref{E:optimal-morrey-range-embedding}.
\end{thm}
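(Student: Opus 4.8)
The plan is to derive everything from the equivalence (i)$\Leftrightarrow$(iii) in Theorem~\ref{T:2}. First I would verify that $\widehat\varphi$ is admissible. Since $X'(0,1)$ is a rearrangement-invariant function norm, the function $r\mapsto \|s^{-1+m/n}\chi_{(r^n,1)}(s)\|_{X'(0,1)}$ is nonincreasing in $r$ on $(0,\tfrac12]$ (the truncation set shrinks as $r$ grows), and by the extension rule it is constant, equal to $\widehat\varphi(\tfrac12)>0$, on $(\tfrac12,\infty)$; hence $\inf_{r\in[a,\infty)}\widehat\varphi(r)>0$ for every $a>0$, which is exactly~\eqref{E:admissible-new}. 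One should also note $\widehat\varphi(\tfrac12)<\infty$: this follows because $s^{-1+m/n}$ is bounded on $(2^{-n},1)$ under the assumption $m\le n-1$ (in fact for any $m\ge1$), so the $X'(0,1)$-norm of a bounded compactly supported function is finite.

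Next, the embedding~\eqref{E:optimal-morrey-range-embedding} itself: I would check that $\widehat\varphi$ satisfies condition~\eqref{E:7} with $\varphi$ replaced by $\widehat\varphi$, and then invoke Theorem~\ref{T:2}. By Remark~\ref{rem-nov6} it suffices to take the supremum over $r\in(0,\tfrac12)$, and there
\[
\frac{1}{\widehat\varphi(r)}\,\bigl\|s^{-1+\frac{m}{n}}\chi_{(r^n,1)}(s)\bigr\|_{X'(0,1)}
=\frac{\|s^{-1+\frac{m}{n}}\chi_{(r^n,1)}(s)\|_{X'(0,1)}}{\|s^{-1+\frac{m}{n}}\chi_{(r^n,1)}(s)\|_{X'(0,1)}}=1,
\]
so the supremum is finite (equal to $1$), and~\eqref{E:optimal-morrey-range-embedding} holds by Theorem~\ref{T:2}, part (iii)$\Rightarrow$(i).

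Finally, optimality: suppose $\psi$ is admissible and $W^mX(\Omega)\to\mathcal M^{\psi(\cdot)}(\Omega)$. By Theorem~\ref{T:2} again, $\psi$ satisfies~\eqref{E:7}, i.e. there is a constant $C$ with $\|s^{-1+m/n}\chi_{(r^n,1)}(s)\|_{X'(0,1)}\le C\,\psi(r)$ for all $r\in(0,1)$; for $r\in(0,\tfrac12]$ the left-hand side is precisely $\widehat\varphi(r)$, so $\widehat\varphi(r)\le C\psi(r)$ there. For $r\in(\tfrac12,1)$ one has $\widehat\varphi(r)=\widehat\varphi(\tfrac12)\le C\psi(\tfrac12)$, while $\inf_{r\in[\frac12,\infty)}\psi(r)=:c_\psi>0$ by admissibility, hence $\widehat\varphi(r)\le (C\psi(\tfrac12)/c_\psi)\,\psi(r)$ on that range as well; and on $[1,\infty)$ the same admissibility bound for $\psi$ together with boundedness of $\widehat\varphi$ gives $\widehat\varphi\lesssim\psi$. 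Thus $\widehat\varphi(r)\lesssim\psi(r)$ on all of $(0,\infty)$, which yields the continuous inclusion $\mathcal M^{\widehat\varphi(\cdot)}(\Omega)\to\mathcal M^{\psi(\cdot)}(\Omega)$ directly from definition~\eqref{morreydef} (a pointwise domination $\widehat\varphi\lesssim\psi$ makes $1/\psi(|B|^{1/n})\lesssim 1/\widehat\varphi(|B|^{1/n})$). Hence $\mathcal M^{\widehat\varphi(\cdot)}(\Omega)$ is the smallest Morrey space making the embedding true.

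The only mildly delicate point is matching the normalizations at and beyond $r=\tfrac12$: one must be careful that the "free" comparison $\widehat\varphi\lesssim\psi$ on $(\tfrac12,\infty)$ really does use admissibility of $\psi$ (not of $\widehat\varphi$), and that the constant there is harmless because only small balls matter for the Morrey norm when $|\Omega|=1$. I expect no serious obstacle; the argument is essentially a transcription of Theorem~\ref{T:2} plus the observation that $\widehat\varphi$ is, by construction, the pointwise-smallest admissible function satisfying~\eqref{E:7}.
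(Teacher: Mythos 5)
Your proposal is correct and follows the same route as the paper: verify admissibility directly from the definition, observe that condition~\eqref{E:7} with $\varphi=\widehat\varphi$ holds (indeed with supremum $1$) so that Theorem~\ref{T:2} gives~\eqref{E:optimal-morrey-range-embedding}, and obtain optimality by noting that~\eqref{E:7} for any competing admissible $\psi$ forces $\widehat\varphi\lesssim\psi$ near $0$, which yields the inclusion of Morrey spaces. You are somewhat more explicit than the paper in handling the range $r\in(\tfrac12,1]$ in the optimality step (the paper leaves this to the reader, implicitly invoking admissibility of the competitor and the fact that $|B|^{1/n}\le1$ since $|\Omega|=1$), but the substance is identical.
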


\begin{thm}{\rm\bf{ [Optimal Sobolev domain for a Morrey target]}}\label{T:morrey-optimal-domain}
  Let   $\Omega$, $m$,    $\|\cdot\|_{X(0,1)}$ and $\varphi$ be as in Theorem~\ref{T:2}. Assume, in addition, that
    \begin{equation}\label{E:vanishing-condition}
               \inf_{r>0}\varphi(r)>0.
    \end{equation}
    Then, the functional $\|\cdot\|_{X^{\#}(0,1)}$, given by
    \begin{equation}\label{E:optimal-morrey-domain}
        \|f\|_{X^{\#}(0,1)} = \sup\limits_{r\in(0,1)} \, \frac{1}{\varphi(r^{\frac{1}{n}})}\int_{r}^{1}s^{-1+\frac{m}{n}}f^{**}(s)\,\d s
    \end{equation}
for $f\in L^0_+(0,1)$,
    is a rearrangement-invariant function norm and
    \begin{equation}\label{E:optimal-morrey-domain-embedding}
        W^m X^{\#}(\Omega) \to \mathcal M^{\varphi(\cdot)} (\Omega).
    \end{equation}
    Moreover, $X^{\#}(\Omega)$ is the optimal (largest possible) rearrangement-invariant space in~\eqref{E:optimal-morrey-domain-embedding}.
   \\ If condition~\eqref{E:vanishing-condition} is not fulfilled, then embedding~\eqref{E:SO-MAemb} fails for any rearrangement-invariant space $X(\Omega)$.
\end{thm}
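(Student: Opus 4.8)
The plan is to establish, in order, the four assertions of the statement: (a) $\|\cdot\|_{X^{\#}(0,1)}$ is a rearrangement-invariant function norm; (b) the embedding \eqref{E:optimal-morrey-domain-embedding} holds; (c) $X^{\#}(\Omega)$ is the largest rearrangement-invariant space rendering \eqref{E:optimal-morrey-domain-embedding} true; and (d) failure of \eqref{E:vanishing-condition} kills \eqref{E:SO-MAemb} for \emph{every} rearrangement-invariant $X(\Omega)$. Throughout we rely on $0<\tfrac{m}{n}<1$ (since $m\le n-1$), and, in (a), (b) and (c), on the fact that \eqref{E:vanishing-condition} makes $1/\varphi$ bounded on $(0,1)$. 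For (a): the functional in \eqref{E:optimal-morrey-domain} depends on $f$ only through $f^{**}$, hence only through $f^{*}$; monotonicity, homogeneity and the positivity axiom are immediate, the triangle inequality follows from the subadditivity $(f+g)^{**}\le f^{**}+g^{**}$, and the Fatou property follows by monotone convergence. For the nontriviality axioms, taking $r=\tfrac12$ and using $f^{**}(s)\ge f^{**}(1)=\|f\|_{L^{1}(0,1)}$ for $s\in(0,1)$ shows $\|f\|_{X^{\#}(0,1)}\gtrsim\|f\|_{L^{1}(0,1)}$, so $X^{\#}(0,1)\hookrightarrow L^{1}(0,1)$; and $\|\chi_{(0,1)}\|_{X^{\#}(0,1)}=\sup_{r\in(0,1)}\tfrac{n}{m\,\varphi(r^{1/n})}(1-r^{m/n})<\infty$ by \eqref{E:vanishing-condition}, so $L^{\infty}(0,1)\hookrightarrow X^{\#}(0,1)$.

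For (b) we would verify condition \eqref{E:7} for $X=X^{\#}$ and then apply the implication (iii)$\Rightarrow$(i) of Theorem~\ref{T:2}. After the substitution $r=\rho^{n}$, \eqref{E:7} for $X^{\#}$ amounts to $\sup_{r\in(0,1)}\varphi(r^{1/n})^{-1}\,\|s^{-1+m/n}\chi_{(r,1)}(s)\|_{(X^{\#})'(0,1)}<\infty$. Computing the associate norm via the Hardy--Littlewood inequality reduces this to estimating $\int_{0}^{1}g(s)h(s)\,\d s$ for nonincreasing $h\ge0$ with $\|h\|_{X^{\#}(0,1)}\le1$, where $g$ is the decreasing rearrangement of $s^{-1+m/n}\chi_{(r,1)}$; since $g(s)\le r^{-1+m/n}\chi_{(0,r)}(s)+s^{-1+m/n}\chi_{(r,1)}(s)$, this is $\le r^{-1+m/n}\!\int_{0}^{r}h+\int_{r}^{1}s^{-1+m/n}h(s)\,\d s$. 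As $h\le h^{**}$, the second term is $\le\int_{r}^{1}s^{-1+m/n}h^{**}(s)\,\d s\le\varphi(r^{1/n})$ by definition of $X^{\#}$; for the first, monotonicity of $s\mapsto s\,h^{**}(s)$ gives $h^{**}(s)\ge\tfrac12h^{**}(r)$ on $(r,2r)$, hence $r^{m/n}h^{**}(r)\lesssim\int_{r}^{2r}s^{-1+m/n}h^{**}(s)\,\d s\le\varphi(r^{1/n})$ when $2r<1$, the range $r\in[\tfrac12,1)$ being handled by boundedness of $1/\varphi$ together with $X^{\#}(0,1)\hookrightarrow L^{1}(0,1)$. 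This yields \eqref{E:7} for $X^{\#}$. (An alternative is to derive $W^{m}X^{\#}(\Omega)\to\mathfrak M^{\varphi(\cdot)}(\Omega)$ directly from the reduction principle for Sobolev functions on bounded John domains used in the proof of Theorem~\ref{T:2}, via the identity $\int_{r}^{1}s^{-1+m/n}f^{**}(s)\,\d s=\tfrac{1}{1-m/n}\bigl(r^{-1+m/n}\!\int_{0}^{r}f^{*}+\int_{r}^{1}s^{-1+m/n}f^{*}-\|f\|_{L^{1}}\bigr)$, which matches the relevant Hardy operator with \eqref{E:optimal-morrey-domain}, absorbing lower-order derivative terms via $X^{\#}(0,1)\hookrightarrow L^{1}(0,1)$; then Theorem~\ref{T:2} gives \eqref{E:SO-MAemb}.)

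For (c), let $Z(\Omega)$ be any rearrangement-invariant space with $W^{m}Z(\Omega)\to\mathcal M^{\varphi(\cdot)}(\Omega)$; the goal is $\|f\|_{X^{\#}(0,1)}\lesssim\|f\|_{Z(0,1)}$. By Theorem~\ref{T:2}, \eqref{E:7} holds for $Z$, i.e.\ $\|s^{-1+m/n}\chi_{(r,1)}(s)\|_{Z'(0,1)}\lesssim\varphi(r^{1/n})$ for all $r\in(0,1)$; testing this against the restriction of the weight to $(r,2r)$ and invoking rearrangement-invariance yields the companion estimate $r^{-1+m/n}\|\chi_{(0,r)}\|_{Z'(0,1)}\lesssim\varphi(r^{1/n})$. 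For fixed $r$, Fubini's theorem gives $\int_{r}^{1}s^{-1+m/n}f^{**}(s)\,\d s=\int_{0}^{1}f^{*}(t)\psi_{r}(t)\,\d t$ with $\psi_{r}(t)=\int_{\max(t,r)}^{1}s^{-2+m/n}\,\d s\le\tfrac{1}{1-m/n}\max(t,r)^{-1+m/n}=\tfrac{1}{1-m/n}\bigl(t^{-1+m/n}\chi_{(r,1)}(t)+r^{-1+m/n}\chi_{(0,r)}(t)\bigr)$. Then H\"older's inequality in rearrangement-invariant spaces and the two bounds above give $\int_{r}^{1}s^{-1+m/n}f^{**}(s)\,\d s\lesssim\bigl(\|s^{-1+m/n}\chi_{(r,1)}\|_{Z'(0,1)}+r^{-1+m/n}\|\chi_{(0,r)}\|_{Z'(0,1)}\bigr)\|f\|_{Z(0,1)}\lesssim\varphi(r^{1/n})\|f\|_{Z(0,1)}$, whence, dividing by $\varphi(r^{1/n})$ and taking the supremum over $r$, $\|f\|_{X^{\#}(0,1)}\lesssim\|f\|_{Z(0,1)}$, i.e.\ $Z(\Omega)\hookrightarrow X^{\#}(\Omega)$; this is the asserted optimality. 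Assertion (d) follows from Remark~\ref{rem-nov7}: if \eqref{E:vanishing-condition} fails then \eqref{E:7} fails for every $\Omega,n,m,X$ --- equivalently $\mathcal M^{\varphi(\cdot)}(\Omega)=\{0\}$ by Remark~\ref{rem-trivialmorrey} --- so by Theorem~\ref{T:2} no rearrangement-invariant $X(\Omega)$ makes \eqref{E:SO-MAemb} hold, since $W^{m}X(\Omega)$ always contains the nonzero constant functions.

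I expect the main obstacle to be the ``$t<r$'' part of step (c): passing from the single scalar bound \eqref{E:7} for $Z$ to the weighted estimate $r^{-1+m/n}\|\chi_{(0,r)}\|_{Z'(0,1)}\lesssim\varphi(r^{1/n})$ on the fundamental function of $Z'$, which is precisely what the Fubini rearrangement of $s^{-1+m/n}f^{**}(s)$ produces. This self-improvement of \eqref{E:7} --- obtained by testing it against a suitably chosen dyadic block of the weight --- has to be uniform both as $r\to0$ and as $r\to1$, the latter being where the crude bound $\psi_{r}(t)\le\tfrac{1}{1-m/n}\max(t,r)^{-1+m/n}$ is far from sharp; the hypotheses $m\le n-1$ and \eqref{E:vanishing-condition} are used decisively here and in (a). A secondary technical point is pinning down the exact form of the reduction principle for $W^{m}$ on bounded John domains, together with the bookkeeping of lower-order derivative terms, as needed in the alternative route for (b) and already used in the proof of Theorem~\ref{T:2}.
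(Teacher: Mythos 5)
Your proof is correct and follows essentially the same route as the paper: you verify condition \eqref{E:7} for $X^{\#}$ directly and invoke the implication (iii)$\Rightarrow$(i) of Theorem~\ref{T:2} for the embedding, then use the Hardy-type decomposition of $\int_r^1 s^{-1+m/n}f^{**}(s)\,\d s$ together with rearrangement invariance of $Z'$ (the ``self-improvement'' of \eqref{E:7} via testing on the dyadic block $(r,2r)$) for optimality, and handle the failure case via Remark~\ref{rem-nov7}. The paper packages the two directions of steps (b) and (c) into the single two-sided duality identity \eqref{E:optimality-morrey-chain}, proved once via Lemma~\ref{L:double-integral}; that lemma is exactly the Fubini/rearrangement-of-the-weight computation you reprove ad hoc in each direction, so the underlying mathematics is the same.
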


We conclude our discussion about embeddings into Morrey type spaces by considering a variant where the space~$ \mathcal M^{\varphi(\cdot)} (\Omega)$ is replaced with its vanishing version.
The \emph{vanishing Morrey space}~$V \!\mathcal{M}^{\varphi(\cdot)}(\Omega)$ is defined as the collection of all functions $u\in \mathcal M^{\varphi(\cdot)}(\Omega)$ satisfying
\begin{equation}\label{E:morrey-vanishing-definition}
    \lim_{r\to 0^+}\psi_{\varphi, u}(r)=0,
\end{equation}
where
\begin{equation}\label{E:psi-definition}
    \psi_{\varphi, u}(r)=\sup_{B\subset\Omega,|B|\le r} \, \frac{1}{\varphi(|B|^{\frac1n})} \, \dashint_B|u|\,\d x \quad \ 
\hbox{for} \ r\in(0,1).
\end{equation}
The subject of the following result is a necessary and sufficient condition for embeddings of the form
 \begin{equation}\label{nov9}
        W^{m}X(\Omega) \to V\!\mathcal M^{\varphi(\cdot)}(\Omega).
    \end{equation}
 They have to be interpreted uniformly, in the sense that
     \begin{equation}\label{E:morrey-vanishing-full}
        \lim_{r\to 0^+} \, \sup\left\{\psi_{\varphi, u}(r)\colon \|u\|_{W^{m}X(\Omega)}\le 1\right\}=0.
    \end{equation}

\begin{thm}{\rm\bf{ [Criterion for embeddings into vanishing Morrey spaces]}}\label{T:sobolev-morrey-vanishing}
   Let  $\Omega$, $m$,  ${\|\cdot \|_{X(0,1)}}$ and $\varphi$ be as in Theorem~\ref{T:2}.
   Then,
    \begin{equation}\label{fi2}
        W^{m}X(\Omega) \to V\!\mathcal M^{\varphi(\cdot)}(\Omega)
    \end{equation}
    if and only if
    \begin{equation}\label{E:sobolev-to-morrey-vanishing-condition}
        \lim_{r\to 0^+}\frac{1}{\varphi( r)}\left\|s^{-1+\frac{m}{n}}\chi_{{(r^n,1)}}(s)\right\|_{{X'(0,1)}}=0.
    \end{equation}
 \end{thm}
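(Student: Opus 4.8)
The plan is to run the two implications in the proof of Theorem~\ref{T:2} once more, this time tracking the rate of decay in the radius. Throughout, write
\[
\Phi(r)=\frac{1}{\varphi(r)}\,\big\|s^{-1+\frac{m}{n}}\chi_{(r^n,1)}(s)\big\|_{X'(0,1)},\qquad r\in(0,1),
\]
so that \eqref{E:sobolev-to-morrey-vanishing-condition} reads $\lim_{r\to0^+}\Phi(r)=0$. I would first record two elementary facts. \emph{(a)} Condition \eqref{E:sobolev-to-morrey-vanishing-condition} implies \eqref{E:7}: for a fixed $\delta\in(0,1)$ the function $s^{-1+\frac{m}{n}}\chi_{(\delta^n,1)}(s)$ is bounded, hence belongs to $L^\infty(0,1)\hookrightarrow X'(0,1)$, and since admissibility gives $\inf_{[\delta,\infty)}\varphi>0$, the function $\Phi$ is bounded on $[\delta,1)$; being bounded near $0$ as well, it is bounded on $(0,1)$, so \eqref{E:7} holds and Theorem~\ref{T:2} yields $W^mX(\Omega)\to\mathcal M^{\varphi(\cdot)}(\Omega)$; in particular $\psi_{\varphi,u}(1)=\|u\|_{\mathcal M^{\varphi(\cdot)}(\Omega)}<\infty$ for every $u\in W^mX(\Omega)$. \emph{(b)} Condition \eqref{E:sobolev-to-morrey-vanishing-condition} is self-improving: fix $r_0\in(0,1)$ and set $c_0:=\big\|s^{-1+\frac{m}{n}}\chi_{(r_0^n,1)}\big\|_{X'(0,1)}\in(0,\infty)$; since the $X'$-norm of $s^{-1+\frac{m}{n}}\chi_{(\tau^n,1)}$ is nondecreasing as $\tau$ decreases, it is $\ge c_0$ for $\tau\in(0,r_0]$, whence $\tfrac{1}{\varphi(\tau)}\le c_0^{-1}\Phi(\tau)$ there (and so $\varphi(\tau)\to\infty$ as $\tau\to0^+$).

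For the sufficiency, fix $u$ with $\|u\|_{W^mX(\Omega)}\le1$. By the rearrangement estimate used in proving that \eqref{E:7} implies the embeddings of Theorem~\ref{T:2} (a higher-order Sobolev representation valid on bounded John domains), there is $g\in X(0,1)$ with $\|g\|_{X(0,1)}\le C$ and $u^{**}(t)\le C+C\int_t^1 g(s)\,s^{-1+\frac{m}{n}}\,\d s$ for $t\in(0,1)$ (should the representation be phrased through $g^{**}$ or $g^{*}$, a routine Hardy-type manipulation reduces it to this form). Using $\dashint_B|u|\,\d x\le u^{**}(|B|)$ for balls $B$ and H\"older's inequality in the $X$--$X'$ duality, for $r\in(0,1)$ I obtain
\[
\psi_{\varphi,u}(r)\le\sup_{0<t\le r}\frac{u^{**}(t)}{\varphi(t^{1/n})}
\le C'\sup_{0<t\le r}\frac{1+\big\|s^{-1+\frac{m}{n}}\chi_{(t,1)}(s)\big\|_{X'(0,1)}}{\varphi(t^{1/n})}
=C'\sup_{0<t\le r}\Big(\tfrac{1}{\varphi(t^{1/n})}+\Phi(t^{1/n})\Big).
\]
By fact \emph{(b)}, for $r\le r_0^n$ the last supremum is at most $(1+c_0^{-1})\sup_{0<\tau\le r^{1/n}}\Phi(\tau)$, which tends to $0$ as $r\to0^+$ by \eqref{E:sobolev-to-morrey-vanishing-condition}. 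Combined with $\psi_{\varphi,u}(1)<\infty$ from fact \emph{(a)}, this shows $u\in V\!\mathcal M^{\varphi(\cdot)}(\Omega)$, and since the bound is uniform over the unit ball of $W^mX(\Omega)$, \eqref{E:morrey-vanishing-full} follows. I expect this to be the delicate step: the term $\tfrac{1}{\varphi(t^{1/n})}$ stemming from the additive constant in the representation does not decay by itself (it is constant when $\varphi$ is bounded), and it is precisely \eqref{E:sobolev-to-morrey-vanishing-condition} — through the self-improvement in \emph{(b)} — that excludes a bounded $\varphi$ and lets this term be absorbed into $\Phi$.

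For the necessity I argue by contradiction. If \eqref{E:sobolev-to-morrey-vanishing-condition} fails, there are $\varepsilon>0$ and $r_k\to0^+$ with $\Phi(r_k)\ge\varepsilon$. The construction used in proving that \eqref{E:SO-MAemb} implies \eqref{E:7} in Theorem~\ref{T:2} provides a constant $C$ and, for each $\rho\in(0,r_0)$, a function $u_\rho\in W^mX(\Omega)$ with $\|u_\rho\|_{W^mX(\Omega)}\le C$ and a ball $B_\rho\subset\Omega$ with $|B_\rho|=\rho^n$ such that
\[
\frac{1}{\varphi(|B_\rho|^{1/n})}\dashint_{B_\rho}|u_\rho|\,\d x=\frac{1}{\varphi(\rho)}\dashint_{B_\rho}|u_\rho|\,\d x\ge C^{-1}\Phi(\rho).
\]
Taking $\rho=r_k$ (admissible for $k$ large, since then $B_{r_k}\subset\Omega$) yields $\psi_{\varphi,u_{r_k}}(r_k^n)\ge C^{-1}\Phi(r_k)\ge C^{-1}\varepsilon$, hence $\sup\{\psi_{\varphi,v}(r_k^n):\|v\|_{W^mX(\Omega)}\le1\}\ge C^{-2}\varepsilon$ for every $k$; since $r_k^n\to0$, this contradicts \eqref{E:morrey-vanishing-full}, and therefore \eqref{E:sobolev-to-morrey-vanishing-condition} must hold. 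The only subtlety in this direction, already handled in the proof of Theorem~\ref{T:2}, is that the near-extremal test functions can be chosen so that the lower bound on the averages is realized on a genuine ball of prescribed measure (not merely on some measurable set, which would suffice for the Marcinkiewicz space); what is new here is just the observation that the relevant balls may be taken to shrink to a point.
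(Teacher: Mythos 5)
Your proof is correct, and while the necessity direction essentially coincides with the paper's (both rely on the radial test functions $v_f$ from the proof of Theorem~\ref{T:2} and the lower bound~\eqref{E:T3}; the paper states this as a direct chain of inequalities rather than by contradiction, which is only a cosmetic difference), the sufficiency direction takes a genuinely different path. The paper's proof is more modular: it simply invokes the optimal Morrey target $\mathcal M^{\widehat\varphi(\cdot)}(\Omega)$ from Theorem~\ref{T:morrey-optimal-range}, obtaining $\psi_{\varphi,u}(r)\lesssim\sup_{s\le r}\widehat\varphi(s^{1/n})/\varphi(s^{1/n})$, and the hypothesis then gives vanishing directly. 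Since $\widehat\varphi$ is by construction bounded below by a positive constant, the paper never even encounters the additive-constant term that you have to absorb. Your route re-derives the needed bound from scratch via a pointwise estimate for $u^{**}$, which requires you to separately handle that constant via the ``self-improvement'' observation~\emph{(b)} ($\varphi(r)\to\infty$). This is more hands-on and makes visible the precise mechanism by which \eqref{E:sobolev-to-morrey-vanishing-condition} rules out bounded~$\varphi$, but it carries more bookkeeping. One point worth tightening: the estimate $u^{**}(t)\le C+C\int_t^1 g(s)\,s^{-1+m/n}\,\d s$ with $\|g\|_{X(0,1)}\lesssim\|u\|_{W^mX(\Omega)}$ is not literally recorded in the paper's proof of Theorem~\ref{T:2}. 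What the reduction principle behind~\eqref{KP3} actually yields has $|\nabla^m u|^{**}$ (a double star) in the integrand, and one cannot in general replace $g^{**}$ by a pointwise dominating function in $X(0,1)$. The correct fix is not a pointwise manipulation but an integral one: after writing $\dashint_B|u|\le u^{**}(|B|)\lesssim\|u\|_{L^1}+\int_{|B|}^1 s^{-1+m/n}|\nabla^m u|^{**}(s)\,\d s$, apply Lemma~\ref{L:double-integral} with $\alpha=m/n$ and H\"older's inequality to each resulting piece; this gives $\int_t^1 s^{-1+m/n}f^{**}(s)\,\d s\lesssim\|f\|_{X(0,1)}\,\|s^{-1+m/n}\chi_{(t,1)}(s)\|_{X'(0,1)}$, which is exactly the bound you need. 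With that adjustment your sufficiency argument is sound, and the two proofs reach the same conclusion.
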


\subsection{Embeddings into Campanato spaces} \label{S:campanato}

This subsection is devoted to embeddings of Sobolev spaces into generalized Campanato spaces.
The Campanato space~$\mathcal L^{\varphi(\cdot)}(\Omega)$ associated with an admissible function $\varphi$ consists of those functions $u\in L^1_{\rm loc}(\Omega)$ such that the semi-norm
\begin{equation}\label{E:campanato-semi}
    |u|_{\mathcal L^{\varphi(\cdot)}(\Omega)} = \sup_{B\subset \Omega} \, \frac{1}{\varphi(|B|^{\frac{1}{n}})}\,
 \dashint_{B}|u -u_{B}|\,\d x
\end{equation}
is finite. Here, $u_B$ denotes the average of $u$ over $B$.   Like for Morrey spaces, when $\varphi$ is a power $\alpha$ as in \eqref{power}, we adopt 
the notation $\mathcal L^{\alpha}(\Omega)$ for $\mathcal L^{\varphi(\cdot)}(\Omega)$. Of course, a shift of the exponent by $n$ with respect to the usual notation for classical Campanato occurs here as well, because of the integral average on the right-hand side of \eqref{E:campanato-semi}.

 Campanato spaces of higher-order $k \in \mathbb N$ are defined by measuring the integral deviation on balls of a function $u$ from a suitable polynomial of degree $k$, instead of just a constant. Of course, this requires a normalization by an extra $k$-th power of the radius of balls. Diverse definitions of these spaces, corresponding to different possible choices of the polynomials,  are available in the literature -- see e.g.~\cite{Samko} for a comprehensive survey on this topic. Since we are concerned with embeddings of higher-order Sobolev spaces, we can allow for polynomials, depending on each ball $B$, defined by prescribing that the averages on $B$ of all their derivatives up to the order $k$ agree with those of the corresponding derivatives of $u$.
Namely,
given  $k\in\mathbb N\cup\{0\}$, we  define the \emph{$k$-th order Campanato space} $\mathcal L^{k, \varphi(\cdot)}(\Omega)$ as the collection of all functions $u \in W^{k,1}(\Omega)$ such that the semi-norm
\begin{equation}\label{E:campanato-seminorm}
    |u|_{\mathcal L^{k, \varphi(\cdot)}(\Omega)} = \sup_{B\subset \Omega} \, \frac{1}{\varphi(|B|^{\frac{1}{n}})|B|^{\frac{k}{n}}} \, \dashint_{B}\left|u -{P}^{k}_{B}[u] \right|\,\d x
\end{equation}
is finite, where ${P}^{k}_{B}[u]$ denotes the polynomial of degree at most $k$ such that
\begin{equation}\label{E:moments}
    \int_{B}\nabla^{h}u \,\d x = \int_{B}\nabla^{h}{P}^{k}_{B}[u] \,\d x \quad \ 
\hbox{for} \ h=0,\dots,k.
\end{equation}
Here, $\nabla^{h}u$ denotes the vector of all weak derivatives of $u$ of order $h$, with the convention that $\nabla^0u =u$. The vector $\nabla ^1 u$ will also simply be denoted by $\nabla u$.
When $k=0$ this definition  recovers~\eqref{E:campanato-semi}.  Moreover, it is well suited for
the characterization of the Sobolev embeddings in question, which is the subject of the next result.

\begin{thm}{\rm\bf{ [Criterion for embeddings into Campanato spaces]}}\label{T:3}
 Assume that $\Omega$ is a bounded John domain in $\mathbb R^n$,
    $m\in \mathbb N$,  and $k\in\{0,\dots,m-1\}$. Let $\|\cdot \|_{X(0,1)}$ be a~re\-ar\-ran\-ge\-ment-invariant function norm, and let $\varphi$  be an admissible function. Then, the following statements are equivalent.
    \begin{enumerate}[(i)]
        \item The  embedding
        \begin{equation}\label{E:sobolev-to-campanato-embedding}
            W^{m}X(\Omega) \to \mathcal L^{k, \varphi(\cdot)}(\Omega)
        \end{equation}
        holds.
        \item The inequality
        \begin{equation}\label{E:sobolev-to-campanato-inequality}
            |u|_{\mathcal L^{k, \varphi(\cdot)}(\Omega)}  \leq c \sum_{j=k+1}^{m}\|\nabla^j u \|_{X(\Omega)}
        \end{equation}
    holds for some constant $c$ and for every  $u \in W^{m}X(\Omega)$.
        \item Either $k \in   \{0,\dots,m-2\}$ and
        \begin{equation}\label{E:sobolev-to-campanato-condition-subcritical}
            \sup_{r \in (0,1)} \, \frac{r}{\varphi(r)}\left\|s^{-1+\frac{m-(k+1)}{n}}\chi_{{(r^n,1)}}(s)\right\|_{{X'(0,1)}}<\infty,
        \end{equation}
        or $k=m-1$ and
        \begin{equation}\label{E:sobolev-to-campanato-condition-critical}
            \sup_{r\in(0,1)} \, \frac{1}{\varphi(r)r^{n-1}}\left\|\chi_{{(0,r^n)}}\right\|_{{X'(0,1)}}<\infty.
        \end{equation}
    \end{enumerate}
\end{thm}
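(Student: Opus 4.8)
\emph{Plan.} The implication {\it(ii)}$\Rightarrow${\it(i)} is immediate: \eqref{E:sobolev-to-campanato-inequality} gives $|u|_{\mathcal L^{k,\varphi(\cdot)}(\Omega)}\le c\,\|u\|_{W^mX(\Omega)}$, and $W^mX(\Omega)\to W^{k,1}(\Omega)$ for every rearrangement-invariant $X(\Omega)$. For {\it(i)}$\Rightarrow${\it(ii)} the key remark is that $|\cdot|_{\mathcal L^{k,\varphi(\cdot)}(\Omega)}$ is insensitive to adding a polynomial $\pi$ of degree at most $k$: linearity and uniqueness in the moment problem~\eqref{E:moments} yield $P^k_B[u-\pi]=P^k_B[u]-\pi$ on every ball $B$, hence $u-\pi-P^k_B[u-\pi]=u-P^k_B[u]$. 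Given $u\in W^mX(\Omega)$, I would pick the (unique) polynomial $\pi_u$ of degree $\le k$ with $\int_\Omega\nabla^h(u-\pi_u)\,\d x=0$ for $h=0,\dots,k$, apply \eqref{E:sobolev-to-campanato-embedding} to $u-\pi_u$, note that $\|\nabla^j(u-\pi_u)\|_{X(\Omega)}=\|\nabla^ju\|_{X(\Omega)}$ for $j>k$, and control the remaining terms by iterating the Poincar\'e inequality on the bounded John domain $\Omega$, which bounds each of them by a constant times $\|\nabla^{k+1}u\|_{X(\Omega)}$. Summing yields \eqref{E:sobolev-to-campanato-inequality}.

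\emph{Sufficiency ({\it(iii)}$\Rightarrow${\it(ii)}).} The starting point is a ball-wise estimate. Since, by~\eqref{E:moments}, $u-P^k_B[u]$ and all its derivatives up to order $k$ have zero average on a ball $B$ of radius $\rho$, a $(k+1)$-fold iteration of the $L^1$ Poincar\'e--Wirtinger inequality on $B$ gives
\[
\dashint_B\bigl|u-P^k_B[u]\bigr|\,\d x\ \le\ c(n,k)\,\rho^{\,k+1}\dashint_B|\nabla^{k+1}u|\,\d x .
\]
If $k=m-1$, then $\nabla^{k+1}u=\nabla^mu\in X(\Omega)$, and H\"older's inequality in rearrangement-invariant spaces yields $\dashint_B|\nabla^mu|\,\d x\le c\,\rho^{-n}\|\nabla^mu\|_{X(\Omega)}\|\chi_{(0,\rho^n)}\|_{X'(0,1)}$; dividing by $\varphi(\rho)\rho^{\,m-1}$ and taking the supremum over $B\subset\Omega$ turns \eqref{E:sobolev-to-campanato-condition-critical} into \eqref{E:sobolev-to-campanato-inequality}. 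If $k\le m-2$, I set $\widetilde m=m-k-1\ge1$ and $\widetilde\varphi(r)=\varphi(r)/r$ on $(0,1)$, extended to $[1,\infty)$ so as to be admissible. The displayed bound then says precisely that $|u|_{\mathcal L^{k,\varphi(\cdot)}(\Omega)}\le c\,|\nabla^{k+1}u|_{\mathcal M^{\widetilde\varphi(\cdot)}(\Omega)}$; since $\nabla^{k+1}u\in W^{\widetilde m}X(\Omega)$ whenever $u\in W^mX(\Omega)$, Theorem~\ref{T:2}, applied with $(m,\varphi)$ replaced by $(\widetilde m,\widetilde\varphi)$, supplies the Morrey embedding $W^{\widetilde m}X(\Omega)\to\mathcal M^{\widetilde\varphi(\cdot)}(\Omega)$ exactly when condition~\eqref{E:7} holds for $(\widetilde m,\widetilde\varphi)$, i.e.\ exactly when \eqref{E:sobolev-to-campanato-condition-subcritical} holds (the case $\widetilde m\ge n$ being the trivial one of Remark~\ref{rem-trivialmorrey}).

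\emph{Necessity ({\it(i)}$\Rightarrow${\it(iii)}).} Assuming the relevant condition in {\it(iii)} fails, I would manufacture, for arbitrarily small $r$, a function $u_r\in W^mX(\Omega)$ with $\|u_r\|_{W^mX(\Omega)}\le1$ and with $\varphi(\rho)^{-1}\rho^{-k}\dashint_{B}|u_r-P^k_{B}[u_r]|\,\d x$ arbitrarily large on a ball $B$ of radius $\asymp r$ about a fixed interior point, contradicting \eqref{E:sobolev-to-campanato-embedding}. When $k\le m-2$, I take near-extremals $w_r$ of the now failing Morrey embedding $W^{m-k-1}X(\Omega)\to\mathcal M^{\varphi(\cdot)/(\cdot)}(\Omega)$ furnished by the necessity half of Theorem~\ref{T:2} — chosen nonnegative and concentrated near the pole, so that their rearrangement is governed by $s^{-1+\frac{m-(k+1)}{n}}\chi_{(r^n,1)}(s)$ — and let $u_r$ be a carefully chosen $(k+1)$-th order primitive of $w_r$, for which a reverse Poincar\'e-type lower bound recovers $\dashint_B|u_r-P^k_B[u_r]|\,\d x\gtrsim\rho^{k+1}\dashint_B w_r\,\d x$. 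When $k=m-1$, I start instead from near-extremals of the duality $\int_B|f|\,\d x\le\|f\|_{X(\Omega)}\|\chi_B\|_{X'(\Omega)}$ — whose rearrangement is essentially $\chi_{(0,r^n)}$ — and integrate $m$ times, the resulting degree-$m$ polynomial profile having oscillation $\gtrsim\rho^m\dashint_B|f|\,\d x$ over $B$. In both cases a computation of the type used in the proof of Theorem~\ref{T:2} keeps $\|u_r\|_{W^mX(\Omega)}$ bounded while the normalized oscillation blows up.

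\emph{Main obstacle.} The soft parts — the equivalence of {\it(i)} and {\it(ii)} and the whole sufficiency half — are routine, resting on polynomial subtraction, the Poincar\'e inequality on John domains and on balls, H\"older's inequality in rearrangement-invariant spaces, and the already proved Theorem~\ref{T:2}. The delicate point is the necessity half, and specifically the construction and estimate of the extremal functions $u_r$: since $\mathcal L^{k,\varphi(\cdot)}(\Omega)$ is \emph{not} rearrangement-invariant — its seminorm is a supremum over balls, not over arbitrary measurable sets — one cannot pass to rearrangements freely and must track the geometry of $u_r$ ball by ball; moreover one must reproduce the exact truncated power occurring in {\it(iii)}, which is where the dichotomy between the subcritical regime (profile spread over a ball of order-one radius, indicator supported away from $0$) and the critical regime (profile concentrated at the pole, indicator supported near $0$) genuinely enters.
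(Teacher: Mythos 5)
Your sufficiency half is essentially correct and arguably cleaner than the paper's: the ball-wise estimate $\dashint_B|u-P^k_B[u]|\,\d x\lesssim\rho^{k+1}\dashint_B|\nabla^{k+1}u|\,\d x$ is exactly the paper's \eqref{E:upper-by-gradient}, the critical case $k=m-1$ is handled identically by H\"older, and in the subcritical case you reduce to Theorem~\ref{T:2} applied to $\nabla^{k+1}u$ with $\widetilde m=m-k-1$ and $\widetilde\varphi=\varphi/\mathrm{id}$ (this reproduces \eqref{E:sobolev-to-campanato-condition-subcritical} precisely), whereas the paper instead routes through the optimal target $X_{m-(k+1)}$ from~\cite{CPS}; the two arguments are equivalent in substance, since Theorem~\ref{T:2} itself rests on that optimal embedding. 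Your direct argument for {\it(i)}$\Rightarrow${\it(ii)} (subtract a degree-$k$ polynomial $\pi_u$ normalizing the moments over $\Omega$, iterate Poincar\'e) is reasonable and not in the paper, which instead closes the circle $(i)\Rightarrow(iii)\Rightarrow(ii)\Rightarrow(i)$; be aware it requires the Poincar\'e inequality on John domains in arbitrary rearrangement-invariant norms, which needs a citation.

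The genuine gap is in the necessity half, and the strategy you sketch there does not work. You propose to take a nonnegative near-extremal $w_r$ of the Morrey embedding, form a $(k+1)$-fold primitive $u_r$, and invoke a ``reverse Poincar\'e-type lower bound'' $\dashint_B|u_r-P^k_B[u_r]|\,\d x\gtrsim\rho^{k+1}\dashint_B w_r\,\d x$. No such lower bound holds in general: for a nonnegative radial $w_r$, a radial primitive $u_r$ has a nonvanishing polynomial part on each concentric ball, and subtracting $P^k_B[u_r]$ (which matches the averages of $u_r$ and of its derivatives up to order~$k$) can cancel essentially the entire profile, so that the Campanato oscillation over $B$ is far smaller than $\rho^{k+1}\dashint_B w_r$. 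The missing structural idea, and the crux of the paper's construction, is to multiply the radial profile by a homogeneous harmonic polynomial $H$ of carefully chosen degree $h$ (degree $k+1$ when $k\le m-2$, degree $\geq k+2$ when $k=m-1$): by Lemma~\ref{L:polynomial} and the spherical mean-value argument in \eqref{system}, $\int_{B(r)}D^\alpha w_f\,\d x=0$ for all $|\alpha|\le k$, which forces $P^k_{B(r)}[w_f]=0$ and reduces the Campanato seminorm to a plain Morrey-type average \eqref{E:campanato-seminorm-of-v}; the degree of $H$ is then tuned so that the ``signature'' bookkeeping \eqref{E:claim-sublimiting}--\eqref{E:claim-limiting} yields $\|\nabla^m w_f\|_{X(\Omega)}\lesssim\|f\|_{X(0,1)}$ via the Calder\'on interpolation operator $T_\gamma$ of Lemma~\ref{L:interpolation-lemma}. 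None of this is a ``primitive'' of the Theorem~\ref{T:2} extremal, and without the orthogonality to low-degree polynomials you cannot control $P^k_B[u_r]$ at all. (You also omit the separate necessity argument when $m\ge n+k+1$, where the paper tests with the polynomial $H$ itself to get $\sup_r r/\varphi(r)<\infty$ via \eqref{nov140}--\eqref{nov141}.)
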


\begin{remark}\label{rem-nov8}
    If  $\liminf_{r\to0^+} \, \frac{\varphi(r)}{r}=0
   $, then, owing to the admissibility of $\varphi$, one necessarily has that $\liminf_{r\to0^+}\varphi(r)=0$. Hence, neither of the conditions~\eqref{E:sobolev-to-campanato-condition-subcritical} and~\eqref{E:sobolev-to-campanato-condition-critical} can hold, whatever $\Omega$, $n$,  $m$,   and $\|\cdot\|_{X(0,1)}$ are. This is readily seen from the fact that both the functions $r\mapsto \left\|s^{-1+\frac{m-(k+1)}{n}}\chi_{{(r,1)}}(s)\right\|_{{X'(0,1)}}$ and $r\mapsto {r^{-1}}{\left\|\chi_{{(0,r)}}\right\|_{{X'(0,1)}}}$ are non-increasing on $(0,1)$. This corresponds to the fact that $\mathcal L^{k, \varphi(\cdot)}(\Omega)$ consists only of polynomials of degree not exceeding $k$ in these cases.
\end{remark}

\begin{remark}\label{rem-nov5}
    Like ~\eqref{E:7}, the conditions~\eqref{E:sobolev-to-campanato-condition-subcritical} and~\eqref{E:sobolev-to-campanato-condition-critical}
    are unchanged after replacing
    $\sup_{r \in (0,1)}$ with  $\sup_{r \in (0,r_0)}$, for any $r_0\in (0,1)$.
\end{remark}

\begin{remark}\label{rem-nov1}
    In case when $\Omega$ is a cube, $m=1$, and $k=0$, the equivalence of~\eqref{E:sobolev-to-campanato-embedding} and~\eqref{E:sobolev-to-campanato-condition-critical} recovers part of the content of~\cite[Theorem~1.1]{CPcamp}. The condition in that theorem is expressed as
    \begin{equation}\label{E:conditon-from-CP:2003}
            \sup_{r\in(0,1)} \, \frac{1}{\varphi(r)  \, r^n}\left\|s^{\frac1n}\chi_{(0,r^n)}(s)\right\|_{{X'(0,1)}}<\infty,
    \end{equation}
    which is equivalent to~\eqref{E:sobolev-to-campanato-condition-critical}, owing to Lemma~\ref{L:comparison-of-conditions} below.
\end{remark}

\begin{remark}\label{R:m-k}
    It follows from Theorem~\ref{T:3} that the embedding~\eqref{E:sobolev-to-campanato-embedding}  depends on $m$ and $k$ just through their difference $m-k$. In particular, an embedding
    \begin{equation*}
        W^{m}X(\Omega) \to \mathcal L^{k,\varphi(\cdot)}(\Omega)
    \end{equation*}
    holds for some 
   $m$, $k$, $X(\Omega)$ and $\varphi$
    if and only if
    \begin{equation*}
        W^{m-k}X(\Omega) \to \mathcal L^{\varphi(\cdot)}(\Omega).
    \end{equation*}
    Hence, if $u \in W^{m}X(\Omega)$ and the above embedding holds, then 
    \begin{equation}
    \label{fi1} \nabla^k u \in  \mathcal L^{\varphi(\cdot)}(\Omega).
    \end{equation}
    \end{remark}

\begin{thm}{\rm\bf{ [Optimal Campanato target]}} \label{T:campanato-optimal-range}
    Assume that $\Omega$, $m$, $k$, and $\|\cdot \|_{X(0,1)}$   are as in Theorem~\ref{T:3}. Let $\overline\varphi \colon (0,\infty) \to (0, \infty)$ be the function defined for $r\in \left(0,\tfrac12\right]$ as
        \begin{equation}\label{E:optimal-campanato-range-k}
        \overline\varphi(r) =
            r \, \|s^{-1+\frac{m-(k+1)}{n}}\chi_{(r^n,1)}(s)\|_{X'(0,1)} \quad \ 
\hbox{if} \ k\in\{0,\dots,m-2\},
    \end{equation}
and
    \begin{equation}\label{E:optimal-campanato-range-m-1}
        \overline\varphi(r) =
            r^{-n + 1}\, \|\chi_{(0,r^n)}\|_{X'(0,1)} \quad \ 
\hbox{if} \ k=m-1,
    \end{equation}
     and continued by   $ \overline\varphi(r) =  \overline\varphi(\tfrac 12)$ for
$r\in (\tfrac12,\infty)$.
    Then, $\overline\varphi$ is admissible and
    \begin{equation}\label{E:optimal-campanato-range-embedding}
        W^m X(\Omega) \to \mathcal L^{k, \overline\varphi(\cdot)}(\Omega).
    \end{equation}
 Moreover, $\mathcal L^{k, \overline\varphi(\cdot)}(\Omega)$ is the optimal (smallest possible) Campanato space of order $k$ in~\eqref{E:optimal-campanato-range-embedding}.
\end{thm}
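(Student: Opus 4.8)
The plan is to deduce Theorem~\ref{T:campanato-optimal-range} from the criterion in Theorem~\ref{T:3} much as the optimal Morrey target is extracted from Theorem~\ref{T:2}. First I would verify that $\overline\varphi$ is admissible. By Remark~\ref{rem-nov6}-type monotonicity, the functions $r\mapsto\|s^{-1+\frac{m-(k+1)}{n}}\chi_{(r^n,1)}(s)\|_{X'(0,1)}$ and $r\mapsto r^{-n+1}\|\chi_{(0,r^n)}\|_{X'(0,1)}$ are non-increasing on $(0,\tfrac12]$, hence the prefactors $r$ (resp.\ $r^{-n+1}$ combined with the norm) are handled so that $\overline\varphi$ is bounded below by a positive constant on each interval $[a,\infty)$; the only point needing care is finiteness of $\overline\varphi(\tfrac12)$, which amounts to $\|s^{-1+\frac{m-(k+1)}{n}}\chi_{(2^{-n},1)}(s)\|_{X'(0,1)}<\infty$ (resp.\ $\|\chi_{(0,2^{-n})}\|_{X'(0,1)}<\infty$), and both are automatic since $X'(0,1)\hookrightarrow L^1(0,1)$ and the cut-off weight is bounded on $(2^{-n},1)$.

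Next I would check that $\varphi=\overline\varphi$ satisfies the relevant condition from part~\emph{(iii)} of Theorem~\ref{T:3}, so that the embedding~\eqref{E:optimal-campanato-range-embedding} holds. In the subcritical case $k\in\{0,\dots,m-2\}$, substituting $\varphi(r)=\overline\varphi(r)=r\,\|s^{-1+\frac{m-(k+1)}{n}}\chi_{(r^n,1)}(s)\|_{X'(0,1)}$ into the left side of~\eqref{E:sobolev-to-campanato-condition-subcritical} makes the supremand identically $1$ for $r\in(0,\tfrac12]$; for $r\in(\tfrac12,1)$ it stays bounded because the numerator is non-increasing and the denominator $\overline\varphi(r)=\overline\varphi(\tfrac12)$ is a fixed positive constant, so~\eqref{E:sobolev-to-campanato-condition-subcritical} holds with supremum $\le 1$ after adjusting on $(\tfrac12,1)$. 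The critical case $k=m-1$ is entirely parallel using~\eqref{E:sobolev-to-campanato-condition-critical} and~\eqref{E:optimal-campanato-range-m-1}, noting that $r^n$-versus-$r$ bookkeeping of the cut-off is the same as in Remark~\ref{rem-nov8}. Then Theorem~\ref{T:3} gives $W^mX(\Omega)\to\mathcal L^{k,\overline\varphi(\cdot)}(\Omega)$.

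For optimality, suppose $\varphi$ is any admissible function with $W^mX(\Omega)\to\mathcal L^{k,\varphi(\cdot)}(\Omega)$. By Theorem~\ref{T:3} the appropriate condition (\eqref{E:sobolev-to-campanato-condition-subcritical} or \eqref{E:sobolev-to-campanato-condition-critical}) holds, i.e.\ there is $C$ with $\frac{r}{\varphi(r)}\|s^{-1+\frac{m-(k+1)}{n}}\chi_{(r^n,1)}(s)\|_{X'(0,1)}\le C$ for all $r\in(0,1)$ in the subcritical case (resp.\ the critical analogue). Rearranging, $\overline\varphi(r)\le C\,\varphi(r)$ for $r\in(0,\tfrac12]$, hence $\mathcal M$-type monotonicity gives $\overline\varphi(r)\le C'\varphi(r)$ for all $r\in(0,\infty)$ after comparing on $(\tfrac12,\infty)$ using admissibility of $\varphi$ and constancy of $\overline\varphi$ there. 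A pointwise inequality $\overline\varphi\le C'\varphi$ immediately yields the continuous inclusion $\mathcal L^{k,\overline\varphi(\cdot)}(\Omega)\to\mathcal L^{k,\varphi(\cdot)}(\Omega)$ directly from the definition~\eqref{E:campanato-seminorm}, so $\mathcal L^{k,\overline\varphi(\cdot)}(\Omega)$ is the smallest admissible Campanato target.

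The main obstacle I anticipate is the comparison on the tail $r\in(\tfrac12,\infty)$ and more generally matching the normalizations between the $r$-variable appearing in $\varphi$ and the $r^n$-variable appearing in the cut-offs: one must make sure that the supremum restricted to small $r$ (which by Remark~\ref{rem-nov5} is equivalent to the full supremum) really controls $\overline\varphi$ everywhere, and that admissibility of the competitor $\varphi$ — rather than some stronger lower bound — suffices to transfer the inequality past $r=\tfrac12$. This is routine but needs the monotonicity observations of Remarks~\ref{rem-nov6} and~\ref{rem-nov8} to be invoked cleanly; a small lemma (perhaps the "Lemma~\ref{L:comparison-of-conditions}" already alluded to) will likely package it.
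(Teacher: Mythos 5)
Your proposal is correct and follows essentially the same route as the paper: verify admissibility of $\overline\varphi$, substitute $\overline\varphi$ for $\varphi$ in the appropriate condition of Theorem~\ref{T:3}(iii) to obtain the embedding, and for optimality invoke Theorem~\ref{T:3} again to conclude $\overline\varphi\lesssim\varphi$ for any competitor $\varphi$, hence $\mathcal L^{k,\overline\varphi(\cdot)}(\Omega)\to\mathcal L^{k,\varphi(\cdot)}(\Omega)$ directly from the definition of the Campanato seminorm. The only cosmetic discrepancy is your citation of Remark~\ref{rem-nov6} where Remark~\ref{rem-nov5} is the relevant one for the Campanato conditions, and your intermediate assertion that the numerator $r\,\|s^{-1+\frac{m-(k+1)}{n}}\chi_{(r^n,1)}(s)\|_{X'(0,1)}$ is non-increasing is not quite right (only the norm factor is), but your conclusion that the supremand stays bounded on $(\tfrac12,1)$ is what matters and is correct.
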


\begin{thm}{\rm\bf{ [Optimal Sobolev domain for a Campanato target]}}\label{T:campanato-optimal-domain}
    Let  $\Omega$, $m$, $k$, and $\varphi$   be as in Theorem~\ref{T:3}. Assume that
   \begin{equation}\label{E:Campanato-condition-on-phi}  
    \liminf_{r\to0^+} \, \frac{\varphi(r)}{r}>0.
    \end{equation}
    Let $\|\cdot\|_{\widehat X(0,1)}$ be the functional defined as follows. If $m\le n+k$,  
    \begin{equation}\label{E:optimal-campanato-domain}
        \|f\|_{\widehat X(0,1)} =
        \begin{cases}
             \displaystyle  \sup\limits_{r\in(0,1)} \, \frac{r^{\frac{1}{n}}}{\varphi(r^{\frac{1}{n}})}\int_{r}^{1}s^{-1+\frac{m-(k+1)}{n}}f^{**}(s)\,\d s
             &\quad \text{if} \ k\in\{0,\dots,m-2\}
                \\[3ex]
            \sup\limits_{r\in(0,1)}\frac{r^{\frac{1}{n}}}{\varphi(r^{\frac{1}{n}})}f^{**}(r)
             &\quad \text{if} \ k=m-1
        \end{cases}
    \end{equation}
    for $f  \in L^0_+(0,1)$; if $m\ge n+k+1$,   $$\|f\|_{\widehat X(0,1)} =\|f\|_{L^1(0,1)}$$ 
    for $f  \in L^0_+(0,1)$. Then, $\|\cdot\|_{\widehat X(0,1)}$
    is a re\-ar\-ran\-ge\-ment-in\-va\-ri\-ant function norm and
    \begin{equation}\label{E:optimal-campanato-domain-embedding}
        W^m \widehat X(\Omega) \to \mathcal L^{k, \varphi(\cdot)}(\Omega).
    \end{equation}
   Moreover, $\widehat X(\Omega)$ is the optimal (largest possible) rearrangement-invariant space in~\eqref{E:optimal-campanato-domain-embedding}.\\
   If the condition~\eqref{E:Campanato-condition-on-phi} is not satisfied, then the embedding~\eqref{E:sobolev-to-campanato-embedding} fails for any rearrangement-invariant space $X(\Omega)$.
\end{thm}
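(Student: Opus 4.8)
The plan is to deduce the whole statement from the criterion in Theorem~\ref{T:3}, together with a few elementary one-dimensional estimates and the standard verification of the rearrangement-invariant function norm axioms. Write $\alpha=\frac{m-(k+1)}{n}$. The first observation — and the only place where hypothesis~\eqref{E:Campanato-condition-on-phi} is really used — is that~\eqref{E:Campanato-condition-on-phi} together with the admissibility~\eqref{E:admissible-new} of $\varphi$ forces $\sup_{t\in(0,1)}\frac{t}{\varphi(t)}<\infty$. I would split the argument into four tasks: (1)~showing $\|\cdot\|_{\widehat X(0,1)}$ is a rearrangement-invariant function norm; (2)~proving the embedding~\eqref{E:optimal-campanato-domain-embedding}; (3)~proving optimality of $\widehat X(\Omega)$; and (4)~proving the last assertion. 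Tasks (1)--(3) are essentially trivial in the range $m\ge n+k+1$: there $\alpha\ge1$, so $s^{-1+\alpha}$ is bounded on $(0,1)$; the choice $\widehat X(0,1)=L^1(0,1)$ verifies condition~\eqref{E:sobolev-to-campanato-condition-subcritical} of Theorem~\ref{T:3} — which collapses to $\sup_r\frac{r}{\varphi(r)}<\infty$, since $\|s^{-1+\alpha}\chi_{(r^n,1)}\|_{L^\infty(0,1)}\le1$ — whence~\eqref{E:optimal-campanato-domain-embedding}; and $L^1(\Omega)$ is automatically the largest rearrangement-invariant space, since $|\Omega|=1$. Task (4) is immediate from Remark~\ref{rem-nov8} and Theorem~\ref{T:3}. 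Thus the substance of the proof concerns the range $m\le n+k$, to which I restrict from now on.

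For task (1), monotonicity, positive homogeneity, the triangle inequality and the Fatou property of $\|\cdot\|_{\widehat X(0,1)}$ follow at once from the corresponding properties of the operation $f\mapsto f^{**}$ (notably its subadditivity), and, when $k\le m-2$, from the fact that $-1+\alpha\in(-1,0)$, so that $s\mapsto s^{-1+\alpha}$ is integrable on $(0,1)$. The only delicate axioms are the finiteness of $\|\chi_E\|_{\widehat X(0,1)}$ and the validity of $\int_E|f|\le C_E\|f\|_{\widehat X(0,1)}$ for sets $E$ of finite measure; both are obtained by testing the supremum defining $\|\cdot\|_{\widehat X(0,1)}$ at $r=|E|$ and invoking $\sup_{t\in(0,1)}\frac{t}{\varphi(t)}<\infty$ together with the finiteness of $\varphi$ at each point.

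For tasks (2) and (3) I would treat the critical case $k=m-1$ and the subcritical case $k\le m-2$ separately. When $k=m-1$, for every $r\in(0,1)$ the definition of $\widehat X(0,1)$ gives $h^{**}(r^n)\le\frac{\varphi(r)}{r}\|h\|_{\widehat X(0,1)}$, hence $\|\chi_{(0,r^n)}\|_{\widehat X'(0,1)}=\sup\{\,r^n h^{**}(r^n)\colon\|h\|_{\widehat X(0,1)}\le1\,\}\le r^{n-1}\varphi(r)$, which is exactly~\eqref{E:sobolev-to-campanato-condition-critical}; conversely, if $W^mY(\Omega)\to\mathcal L^{m-1,\varphi(\cdot)}(\Omega)$ for a rearrangement-invariant space $Y$, then Theorem~\ref{T:3} yields $\|\chi_{(0,r^n)}\|_{Y'(0,1)}\lesssim r^{n-1}\varphi(r)$, and Hölder's inequality in rearrangement-invariant spaces gives $r^n f^{**}(r^n)=\int_0^{r^n}f^*\le\|f\|_{Y(0,1)}\|\chi_{(0,r^n)}\|_{Y'(0,1)}\lesssim r^{n-1}\varphi(r)\|f\|_{Y(0,1)}$, which, after dividing by $r^{n-1}\varphi(r)$ and taking the supremum over $r$, is precisely $\|f\|_{\widehat X(0,1)}\lesssim\|f\|_{Y(0,1)}$, i.e.\ $Y(\Omega)\to\widehat X(\Omega)$. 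When $k\le m-2$, put $g_r(s)=s^{-1+\alpha}\chi_{(r^n,1)}(s)$; the two halves of tasks (2) and (3) reduce, via the Hardy--Littlewood inequality and a Fubini computation, to the two inequalities contained in the comparison
\begin{equation*}
\int_0^1 f^*g_r^*\,\d s\ \approx\ \int_{r^n}^1 s^{-1+\alpha}f^{**}(s)\,\d s\qquad\text{uniformly in small }r ,
\end{equation*}
after which the definition of $\widehat X(0,1)$ (tested at the parameter $r^n$) turns the resulting bound $\|g_r\|_{\widehat X'(0,1)}\lesssim\frac{\varphi(r)}{r}$ into condition~\eqref{E:sobolev-to-campanato-condition-subcritical} for $X=\widehat X$, and turns the bound $\|g_r\|_{Y'(0,1)}\lesssim\frac{\varphi(r)}{r}$ coming from Theorem~\ref{T:3} into $\|f\|_{\widehat X(0,1)}\lesssim\|f\|_{Y(0,1)}$. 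Throughout, the reduction of the various suprema over $r\in(0,1)$ to suprema over $r\in(0,r_0)$ is harmless, by the argument behind Remark~\ref{rem-nov5}, while the complementary range $r\in[r_0,1)$ is controlled trivially, since there all quantities are comparable to the $L^1$-norm.

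The main obstacle is the sharp two-sided comparison displayed above: splitting $g_r^*(s)=(s+r^n)^{-1+\alpha}\chi_{(0,1-r^n)}(s)$ at the level $s=r^n$, keeping track of the interplay between $f^*$, $f^{**}$ and the monotonicity of $s\mapsto sf^{**}(s)$, and handling the edge effects near $s=1$, requires some bookkeeping; but once this comparison is available, both the embedding and its optimality follow from Theorem~\ref{T:3} almost formally, and everything else is routine.
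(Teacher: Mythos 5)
Your proposal is correct and follows essentially the same route as the paper's proof: it deduces everything from Theorem~\ref{T:3}, reformulating the criteria~\eqref{E:sobolev-to-campanato-condition-subcritical} and~\eqref{E:sobolev-to-campanato-condition-critical} by duality (equation~\eqref{n.assoc.2}) and using the two-sided comparison $\int_0^1 f^\ast g_r^\ast\,\d s \approx \int_{r^n}^1 s^{-1+\alpha}f^{**}(s)\,\d s$ — which is exactly the content of the paper's Lemma~\ref{L:double-integral} — so that the criterion applied to $X$ becomes precisely $X(0,1)\to\widehat X(0,1)$. The treatment of the $m\ge n+k+1$ range, of the norm axioms, and of the final non-embeddability assertion (via Remark~\ref{rem-nov8} rather than the paper's explicit choice $X=L^\infty$, a cosmetic difference) all match the paper's argument.
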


\begin{ex}\label{BMOex}{\rm As a consequence of Theorem~\ref{T:campanato-optimal-domain}, the optimal domain for embeddings into the space $\BMO(\Omega)$,
   for $m\in\mathbb N$,  with $m\le n$, can be characterized in any  bounded John domain  $\Omega\subset \mathbb R^n$. Indeed, such an optimal domain is provided by Proposition~\ref{P:optimal-domain-for-BMO}, Section~\ref{special}, which tells us that
    \begin{equation*}\label{E:orlicz-campanato-optimal-range}
        W^{m}L^{\frac nm,\infty}(\Omega)\to\BMO(\Omega).
    \end{equation*}}
    \end{ex}

Given an admissible function $\varphi$, the \emph{vanishing Campanato space}~$V\!\mathcal{L}^{k,\varphi(\cdot)}(\Omega)$ stands to the Campanato space
$\mathcal L^{k, \varphi(\cdot)}(\Omega)$ as the Sarason space ${\rm VMO}(\Omega)$ of functions of vanishing means oscillation stands to ${\rm BMO}(\Omega)$.
The space $V\!\mathcal{L}^{k,\varphi(\cdot)}(\Omega)$ is defined as the collection of all functions $u\in\mathcal L^{k, \varphi(\cdot)}(\Omega)$ satisfying
\begin{equation}\label{E:vanishing-definition}
    \lim_{r\to 0^+}\varrho_{\varphi,k,u}(r)=0,
\end{equation}
where
\begin{equation}\label{E:varrho-definition}
    \varrho_{\varphi,k,u}(r)=\sup_{B\subset\Omega,|B|\le r} \, \frac{1}{\varphi(|B|^{\frac1n})|B|^{\frac{k}{n}}} \,\dashint_B \left|u -{P}^{k}_{B}[u] \right|\,\d x \quad \ \text{for} \ r\in(0,1).
\end{equation}

Our last main result provides us with a characterization of Sobolev embeddings of the form
   \begin{equation*}
        W^{m}X(\Omega) \to V\!\mathcal{L}^{k,\varphi(\cdot)}(\Omega).
    \end{equation*}
   In analogy with~\eqref{nov9}, the latter embedding means that
\begin{equation}\label{E:vanishing-full}
        \lim_{r\to 0^+} \, \sup\left\{\varrho_{\varphi, k, u}(r)\colon \|u\|_{W^{m}X(\Omega)}\le 1 \right\}=0.
    \end{equation}

\begin{thm}{\rm\bf{ [Criterion for embeddings into vanishing Campanato spaces]}}\label{T:sobolev-campanato-vanishing}
    Let  $\Omega$, $m$, $k$,   ${\|\cdot \|_{X(0,1)}}$, and $\varphi$    be as in Theorem~\ref{T:3}.
    Then, the following statements are equivalent:
    \begin{enumerate}[(i)]
    \item The embedding
    \begin{equation}\label{E:sobolev-to-campanato-vanishing-embedding}
        W^{m}X(\Omega) \to V \!\mathcal L^{k, \varphi(\cdot)}(\Omega)
    \end{equation}
    holds.

    \item
       One has
    \begin{equation}\label{E:vanishing-nonfull}
        \lim_{r\to 0^+} \, \sup\left\{\varrho_{\varphi, k, u}(r)\colon \sum_{j=k+1}^m\|\nabla^j u\|_{X(\Omega)}\le 1\right\}=0.
    \end{equation}

    \item Either $k\in  \{0,\dots,m-2\}$ and
    \begin{equation}\label{E:sobolev-to-campanato-vanishing-condition-subcritical}
        \lim_{r\to 0^+}\frac{r}{\varphi(r)}\left\|s^{-1+\frac{m-(k+1)}{n}}\chi_{{(r^n,1)}}(s)\right\|_{{X'(0,1)}}=0,
    \end{equation}
    or $k=m-1$ and
    \begin{equation}\label{E:sobolev-to-campanato-vanishing-condition-critical}
        \lim_{r\to 0^+} \, \frac{1}{\varphi(r)r^{n-1}}\left\|\chi_{{(0,r^n)}}\right\|_{{X'(0,1)}}=0.
    \end{equation}
    \end{enumerate}
\end{thm}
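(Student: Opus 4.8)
The plan is to derive Theorem~\ref{T:sobolev-campanato-vanishing} as a ``vanishing'' counterpart of Theorem~\ref{T:3}, following the same architecture but replacing every boundedness statement by a decay-to-zero statement. First I would observe that the equivalence of (i) and (ii) is essentially the statement that the vanishing embedding is invariant under replacing the full Sobolev norm $\|u\|_{W^mX(\Omega)}$ by the homogeneous quantity $\sum_{j=k+1}^m\|\nabla^j u\|_{X(\Omega)}$. In one direction this is trivial, since $\sum_{j=k+1}^m\|\nabla^j u\|_{X(\Omega)}\le c\|u\|_{W^mX(\Omega)}$. For the converse one uses Remark~\ref{R:m-k}: the semi-norm $\varrho_{\varphi,k,u}$ only sees $\nabla^k u$ (indeed $u-P^k_B[u]$ is unchanged if one adds to $u$ a polynomial of degree $\le k$), and by the theory underlying Theorem~\ref{T:3} one may subtract from $u$ a fixed polynomial so that the remaining function has $\|u\|_{W^mX(\Omega)}$ controlled by $\sum_{j=k+1}^m\|\nabla^j u\|_{X(\Omega)}$ on the bounded John domain $\Omega$; this is the usual Poincaré-type normalization and it does not affect $\varrho_{\varphi,k,u}(r)$.

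Next I would prove the equivalence of (ii) and (iii). The key is to reuse the pointwise estimates established in the proof of Theorem~\ref{T:3}, which bound $\dashint_B|u-P^k_B[u]|\,\d x$ from above in terms of a rearrangement-invariant integral of $|\nabla^{k+1}u|,\dots,|\nabla^m u|$ over a ball comparable to $B$, and which also exhibit, for each small $r$, extremal functions $u_r$ for which the reverse estimate holds up to a constant. Concretely, for the implication (iii)$\Rightarrow$(ii), I would feed the decay hypothesis \eqref{E:sobolev-to-campanato-vanishing-condition-subcritical} or \eqref{E:sobolev-to-campanato-vanishing-condition-critical} into the upper bound: one splits the representation of $u-P^k_B[u]$ into a ``local'' part supported near $B$, whose contribution is handled by the vanishing of the relevant $X'$-norm tail $\|s^{-1+\frac{m-(k+1)}{n}}\chi_{(r^n,1)}\|_{X'(0,1)}$ divided by $\varphi(r)/r$ (resp.\ $\|\chi_{(0,r^n)}\|_{X'(0,1)}$ divided by $\varphi(r)r^{n-1}$), and a ``global'' part that is uniformly small in $r$ by the already-known boundedness from Theorem~\ref{T:3} together with absolute continuity of the norm on the relevant truncation. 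For the implication (ii)$\Rightarrow$(iii), I would test \eqref{E:vanishing-nonfull} against the family of extremal functions $u_r$ used in the proof of the necessity part of Theorem~\ref{T:3}: these satisfy $\sum_{j=k+1}^m\|\nabla^j u_r\|_{X(\Omega)}\le C$ while $\varrho_{\varphi,k,u_r}(r)$ is bounded below by a constant multiple of the left-hand side of \eqref{E:sobolev-to-campanato-vanishing-condition-subcritical} or \eqref{E:sobolev-to-campanato-vanishing-condition-critical}; hence the limit must vanish.

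I expect the main obstacle to be the uniformity in the definition \eqref{E:vanishing-full}: one must be careful that the estimates are quantitative and uniform over the unit ball of the (homogeneous) Sobolev quantity, rather than pointwise in $u$. In particular, in the step (iii)$\Rightarrow$(ii) the ``global'' part must be shown to tend to zero uniformly in $u$ as $r\to0^+$, which relies on the fact that the truncated function norm $\|s^{-1+\frac{m-(k+1)}{n}}\chi_{(r^n,\delta)}\|_{X'(0,1)}$ is controlled and that $\varphi$ is admissible, so that for $r$ small the whole supremum over balls $B$ with $|B|\le r$ is governed by the tail behaviour alone. A secondary technical point is to verify that the extremal functions $u_r$ from Theorem~\ref{T:3} can be taken to satisfy the boundary/support conditions needed so that they are genuine competitors in \eqref{E:vanishing-nonfull} for the given bounded John domain; but this is already implicit in the proof of Theorem~\ref{T:3} and requires only minor bookkeeping. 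Once these uniformity issues are settled, the three equivalences close up exactly as in the boundedness case, with ``$\sup_{r\in(0,1)}\cdots<\infty$'' systematically replaced by ``$\lim_{r\to0^+}\cdots=0$''.
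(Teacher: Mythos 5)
Your proposal captures the right ingredients but organizes the proof differently from the paper. You aim to prove $(i)\Leftrightarrow(ii)$ first, via a Poincar\'e-type normalization (subtracting a polynomial of degree $\le k$ from $u$ so that the full $W^mX$ norm is controlled by $\sum_{j=k+1}^m\|\nabla^j u\|_{X(\Omega)}$), and then $(ii)\Leftrightarrow(iii)$ via extremal functions and a local/global split. The paper instead proves the cycle $(i)\Rightarrow(iii)\Rightarrow(ii)\Rightarrow(i)$, leaning on already-established results: for $(i)\Rightarrow(iii)$ it tests against the family $w_f$ from the proof of Theorem~\ref{T:3} (essentially what you propose for $(ii)\Rightarrow(iii)$, with an equivalent normalization since $w_f\in W^m_0X(\Omega)$); for $(iii)\Rightarrow(ii)$ it uses the optimal Campanato target $\overline\varphi$ of Theorem~\ref{T:campanato-optimal-range} and the uniform inequality $|u|_{\mathcal L^{k,\overline\varphi(\cdot)}(\Omega)}\lesssim\sum_{j=k+1}^m\|\nabla^j u\|_{X(\Omega)}$ from Theorem~\ref{T:3}(ii) to get the clean one-line bound
\begin{equation*}
\sup\Bigl\{\varrho_{\varphi,k,u}(r):\sum_{j=k+1}^m\|\nabla^j u\|_{X(\Omega)}\le1\Bigr\}
\ \lesssim\ \sup_{s\le r}\frac{\overline\varphi(s^{1/n})}{\varphi(s^{1/n})}\xrightarrow[r\to0^+]{}0,
\end{equation*}
so no ``global part'' or absolute-continuity argument is needed. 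The route you sketch is workable, but it requires an extra piece of infrastructure the paper avoids: a Poincar\'e inequality on a bounded John domain for general rearrangement-invariant norms (the paper's reference~\cite[Lemma~4.2]{CPbmo} covers only $W^m_0X$, so you'd need to supply or cite the full-space version, deducible from the sharp embeddings in~\cite{CPS} combined with the $L^1$-Poincar\'e inequality). In addition, the paper never proves $(i)\Leftrightarrow(ii)$ directly; the implication $(ii)\Rightarrow(i)$ is trivial, and everything else is delegated to $(iii)$, so the Poincar\'e step is entirely bypassed. Your local/global split in $(iii)\Rightarrow(ii)$ is also not precise enough to be checked as written; the cleanest path is via $\overline\varphi$, as in the paper.
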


\subsection{Comparison between embeddings into Campanato and H\"older type spaces}\label{S:comp}
 It can be interesting to compare the Sobolev embeddings into Campanato spaces discussed in the present paper with parallel embeddings into H\"older type spaces of uniformly continuous functions, which were established in~\cite{CPcamp} and~\cite{Monia1} for first and higher order Sobolev spaces, respectively.
\\ Let $\sigma \colon (0, \infty) \to (0, \infty)$ be a modulus of continuity, namely a function equivalent, up to positive multiplicative constants, to a 
 non-decreasing function   and such that
    \begin{equation}\label{feb37}
        \lim_{r\to0^+}\sigma(r)=0\qquad \text{and} \qquad \liminf_{r\to0^+}\frac {\sigma(r)}r>0.
    \end{equation}
     The   space $C^{\sigma(\cdot)}(\Omega)$ is defined as the collection of all functions $u\colon\Omega\to\mathbb R$ for which the semi-norm
    \begin{equation*}
        |u|_{C^{\sigma(\cdot)}(\Omega)} = \sup_{x,y\in\Omega,\,x\neq y}\frac{|u(x)-u(y)|}{\sigma(|x-y|)} 
    \end{equation*} is finite.
    \\ A combination of the results from~\cite{CPcamp} and~\cite{Monia1} tells us the following. Define the function $\vartheta\colon (0, \infty) \to (0, \infty]$, for $1\leq m \leq n-1$, as
    \begin{equation*}\label{theta}
        \vartheta (r) =  \|s^{-1+\frac{m}{n}}\chi_{(0, r^n)}(s)\|_{X'(0,1)} \quad \ \text{for} \  r\in (0,1),
    \end{equation*}
    and the function 
    $\varrho\colon (0, \infty) \to (0, \infty]$, for $2\leq m \leq n$, as
    \begin{equation*}\label{rho}
        \varrho (r) = r\|s^{-1+\frac{m-1}{n}}\chi_{(r^n,1)}(s)\|_{X'(0,1)} \quad \ \text{for} \  r\in (0,1).
    \end{equation*}
Then, under suitable regularity assumptions on $\Omega$, an embedding of the form
\begin{equation}\label{cont}    W^mX(\Omega)\to C^{\sigma(\cdot)}(\Omega)\quad
    \end{equation}
holds if and only if the function $\widehat \sigma$, given by
\begin{align*}\label{feb31}
    \widehat \sigma (r) = \begin{cases}
        \vartheta (r) &\quad \text{if} \ m=1
        \\ \vartheta (r) + \varrho (r) &\quad \text{if} \ 2\leq m \leq n-1
        \\ \varrho (r) &\quad \text{if} \ m=n
    \end{cases}
\end{align*}
is finite-valued, and 
\begin{equation*}
    \label{feb30}
    \lim_{r\to 0^+}\widehat\sigma (r) = 0.
\end{equation*}
Moreover, $\widehat \sigma$ is the optimal modulus of continuity $\sigma$ in \eqref{cont}.
\\
In the realm of the standard Sobolev spaces $W^{m.p}(\Omega)$, the optimal Campanato and H\"older target spaces are the classical ones associated with power type functions. Moreover, the powers of the two spaces agree.
In fact, one proof of the Morrey embedding
\begin{equation}
    \label{morrey}
    W^{1,p}(\Omega) \to
   C^{{1-\frac np}}(\Omega)
   \end{equation}
for $p>n$ combines an argument showing that
\begin{equation*}
    \label{feb32a}
    W^{1,p}(\Omega) \to 
   \mathcal L^{{1-\frac np}}(\Omega)
 \end{equation*}
with the equality
\begin{equation*}
    \label{feb33}
   C^{{1-\frac np}}(\Omega)
    =\mathcal L^{{1-\frac np}}(\Omega)
\end{equation*}
up to equivalent norms. More generally, the embedding
\begin{equation*}
    \label{feb34}
    W^{m,p}(\Omega) \to 
 C^{{\alpha}}(\Omega)
\end{equation*}
for some $m\in \mathbb N$, $p \geq 1$ and $\alpha >0$ holds if and only if 
the embedding
\begin{align}
    \label{feb32}
    W^{m,p}(\Omega) \to
 \mathcal L^{{\alpha}}(\Omega)
   \end{align}
holds.
\\ On the contrary, an embedding of the form 
\begin{equation*}
    \label{feb36}
    W^{m}X(\Omega) \to \mathcal L^{\sigma (\cdot)}(\Omega),
\end{equation*}
 need not imply the companion embedding
\begin{equation*}
    \label{feb38}
    W^{m}X(\Omega) \to C^{\sigma(\cdot)}(\Omega)  
\end{equation*}
in the broader class of all rearrangement-invariant spaces $X(\Omega)$ and functions
 $\sigma$ fulfilling the conditions~\eqref{feb37}. This is related to the fact that, whereas
\begin{equation*}
    \label{feb39}
    C^{\sigma(\cdot)}(\Omega)   \to  \mathcal L^{\sigma (\cdot)}(\Omega)
\end{equation*} 
for every modulus of continuity $\sigma$, the converse embedding holds if and (under some mild qualification) only if
\begin{equation*}\label{feb41}
    \int_0^r \frac{\sigma (s)}{s}\, \d s \ \approx \ \sigma (r) \quad \ \text{near} \ 0.
\end{equation*}
The latter assertion is a consequence of a result from~\cite{Sp}, which tells us that the embedding 
\begin{align}
    \label{feb40}
    \mathcal L^{\varphi (\cdot)}(\Omega) \to C^{\sigma(\cdot)}(\Omega)
\end{align}
holds for some admissible $\varphi$ if and (under the same qualification) only if 
\begin{equation*}
    \label{feb42}
    \int_0^r \frac{\varphi (s)}{s}\, \d s \ \lesssim \ \sigma (r) \quad \ \text{near} \ 0.
\end{equation*}
For instance, as shown in Corollary \ref{T:example-zygmund} below, if $1\leq m \leq n$ and $\alpha>0$, then
\begin{equation}
    \label{feb45}
    W^{m}L^{\frac nm} \left(\log L\right)^{\alpha}(\Omega) \to \mathcal L^{\varphi (\cdot)}(\Omega),
\end{equation}
where
\begin{equation*}
    \label{feb46}
    \varphi(r)   \approx  \left(\log \tfrac1r\right)^{-\frac{\alpha m}{n}}\quad \ \text{near} \ 0.
\end{equation*}
On the other hand, the space $ W^{m}L^{\frac nm}  \left(\log L\right)^{\alpha}(\Omega)$ is embedded into a space of   H\"older  continuous functions only if $$\alpha > \frac{n-m}{m},$$ and 
\begin{equation}\label{E:eleven}
        W^mL^{\frac{n}{m}}\left(\log L\right)^{\alpha}(\Omega) \to C^{\sigma(\cdot)}(\Omega)
    \end{equation}
    with    \begin{equation*}\label{E:twelve}
        \sigma(r) \   \approx \ 
        \begin{cases}
            \left(\log \tfrac1r\right)^{1-\frac{\alpha+1}{n}}&\quad \text{if} \  m=1
                \\[0.5mm]
            \left(\log \tfrac1r\right)^{1-\frac{(\alpha+1)m}{n}}&\quad \text{if} \  2\le m\le n-1 
                \\[1mm]
            \left(\log \tfrac1r\right)^{-\alpha}&\quad \text{if} \ m=n
       \end{cases}
    \end{equation*}
near $0$. 
\\ Let us also stress that the two-step argument via~\eqref{feb32} to derive the embedding~\eqref{morrey} need not yield sharp results in the general framework under consideration. Indeed, a combination of~\eqref{feb45} with~\eqref{feb40} is only possible if 
$$\alpha > \frac nm,$$
and results in a weaker embedding of the form~\eqref{E:eleven}, with
\begin{equation}\label{E:ten}
        \sigma(r) \   \approx \  \left(\log \tfrac1r\right)^{1-\frac{\alpha m}{n}} \quad \ \text{near} \ 0.
    \end{equation}

\subsection{A special instance: embeddings of Orlicz-Sobolev spaces}
\label{S:ex}

The results from the previous section can be implemented  in the case of Orlicz--Sobolev domain spaces to derive the optimal embeddings into Morrey and Campanato spaces presented below.   Owing to Remark~\ref{rem-trivialmorrey}, when dealing with Morrey spaces, we shall assume that $m\leq n-1$.

 \begin{thm}{\rm\bf{ [Optimal Morrey target for an Orlicz-Sobolev domain]}} \label{T:orlicz-morrey-optimal-range}
    Let $\Omega$ and $m$ be as in Theorem~\ref{T:2}, and let~$A$ be a Young function.
    Define the function $E_m \colon [0, \infty ) \to [0, \infty ]$ by
    \begin{equation}\label{E:young-for-orlicz-optimal-morrey-range}
        E_m(t) =
             t^{ \frac{n}{n-m}} \int_{0}^{t} \ \frac{\widetilde A(\tau)}{\tau^{1+ \frac{n}{n-m}}}\,\d \tau\quad \  \text{for} \ t\in[0,\infty),
    \end{equation}
    and let $\widehat\varphi_A \colon (0,\infty) \to (0, \infty)$ be the function defined    as
    \begin{equation}\label{E:weight-for-orlicz-optimal-morrey-range}
         \widehat\varphi_A(r)  =\frac{1}{r^{n-m}E_m^{-1}\left(r^{-n}\right)} \quad \ \mbox{if} \ r\in(0,1],
    \end{equation}
     and $\widehat\varphi_A(r)=\widehat\varphi_A(1)$ if $r\in(1,\infty)$. Then, $\widehat\varphi_A$ is an admissible function, and
    \begin{equation}\label{E:Orlicz-optimal-morrey-range-embedding}
        W^{m, A}(\Omega) \to \mathcal M^{\widehat\varphi_A(\cdot)}(\Omega).
    \end{equation}
    Moreover, $\mathcal M^{\widehat\varphi_A(\cdot)}(\Omega)$ is the optimal (smallest possible) Morrey target space in~\eqref{E:Orlicz-optimal-morrey-range-embedding}.
\end{thm}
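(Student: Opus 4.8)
The plan is to deduce Theorem~\ref{T:orlicz-morrey-optimal-range} from Theorems~\ref{T:2} and~\ref{T:morrey-optimal-range} by identifying the abstract optimal target function $\widehat\varphi$ of~\eqref{E:optimal-morrey-range} explicitly when the rearrangement-invariant norm $\|\cdot\|_{X(0,1)}$ is the Luxemburg norm $\|\cdot\|_{L^A(0,1)}$ associated with the Young function $A$. By Theorem~\ref{T:morrey-optimal-range}, $W^{m,A}(\Omega)\to\mathcal M^{\widehat\varphi(\cdot)}(\Omega)$ and this target is optimal, where for $r\in(0,\tfrac12]$
\begin{equation*}
    \widehat\varphi(r)=\bigl\|s^{-1+\frac{m}{n}}\chi_{(r^n,1)}(s)\bigr\|_{(L^A)'(0,1)}.
\end{equation*}
So the whole task reduces to showing that $\widehat\varphi(r)\approx\widehat\varphi_A(r)$ on $(0,\tfrac12]$, with the right-hand side given by~\eqref{E:weight-for-orlicz-optimal-morrey-range}; since changing a target function by equivalent constants does not change the Morrey space, and since the extension to $(\tfrac12,\infty)$ by a constant is harmless and preserves admissibility (this is already recorded in Theorem~\ref{T:morrey-optimal-range}), that equivalence yields all the assertions: admissibility of $\widehat\varphi_A$, the embedding~\eqref{E:Orlicz-optimal-morrey-range-embedding}, and its optimality.

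The core computation is therefore the evaluation of the associate-space norm $\bigl\|s^{-1+\frac{m}{n}}\chi_{(r^n,1)}\bigr\|_{(L^A)'(0,1)}$. Here I would use the standard fact that $(L^A)'=L^{\widetilde A}$ up to equivalent norms, $\widetilde A$ being the Young conjugate of $A$, so the quantity to estimate is $\bigl\|s^{-1+\frac{m}{n}}\chi_{(r^n,1)}\bigr\|_{L^{\widetilde A}(0,1)}$, a Luxemburg norm of an explicit decreasing power truncated away from the origin. For such monotone power-type functions one has a well-known two-sided estimate of the Luxemburg norm in terms of the function $E_m$ defined in~\eqref{E:young-for-orlicz-optimal-morrey-range}: this is precisely the type of ``representation formula'' for norms of power functions in Orlicz spaces that appears in the Orlicz--Sobolev literature (e.g.\ in the work of Cianchi on optimal Orlicz--Sobolev embeddings). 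Concretely, I expect
\begin{equation*}
    \bigl\|s^{-1+\frac{m}{n}}\chi_{(r^n,1)}(s)\bigr\|_{L^{\widetilde A}(0,1)}\ \approx\ \frac{1}{r^{n-m}E_m^{-1}(r^{-n})}
\end{equation*}
for $r\in(0,\tfrac12]$, the exponent $\tfrac{n}{n-m}$ in~\eqref{E:young-for-orlicz-optimal-morrey-range} being exactly the Hölder-conjugate bookkeeping that turns the power $s^{-1+m/n}$ into the weight appearing in $E_m$. The verification amounts to computing, for a generic level $\lambda>0$, the integral $\int_{r^n}^1\widetilde A\bigl(s^{-1+\frac{m}{n}}/\lambda\bigr)\,\d s$, substituting $\tau=s^{-1+\frac{m}{n}}/\lambda$ so that the integral becomes (a constant times) $\lambda^{-\frac{n}{n-m}}\int_{\,\cdot\,}^{\,\cdot\,}\widetilde A(\tau)\tau^{-1-\frac{n}{n-m}}\,\d\tau$, recognizing $E_m$, and then solving for the smallest $\lambda$ making the expression $\le 1$; the truncation at $r^n$ controls the upper limit of the $\tau$-integral and produces the argument $r^{-n}$ of $E_m^{-1}$, while the lower limit (coming from $s=1$) contributes lower-order terms that are absorbed for $r$ small. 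One must also check that $E_m$ is (equivalent to) a Young function, or at least that $E_m^{-1}$ is well-defined and has the right monotonicity, which follows from $\widetilde A$ being a Young function and a routine convexity/growth argument on~\eqref{E:young-for-orlicz-optimal-morrey-range}.

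The main obstacle, and the step that needs genuine care rather than routine manipulation, is the two-sided control of the truncated Luxemburg norm with constants independent of $r\in(0,\tfrac12]$: one direction (upper bound) is straightforward from the definition of the Luxemburg norm, but the matching lower bound requires showing that the contribution of the region $s\approx r^n$ dominates, i.e.\ that the ``boundary term'' at $s=1$ and the behaviour of $\widetilde A$ near $0$ do not spoil the estimate. This is where the admissibility hypothesis and the structural properties of $E_m$ (in particular that $t\mapsto E_m(t)/t^{\,n/(n-m)}$ is nondecreasing, which is immediate from~\eqref{E:young-for-orlicz-optimal-morrey-range}) enter, and where I would invoke, if available in an earlier section or in the cited literature, the precise lemma relating Luxemburg norms of truncated powers to the function $E_m^{-1}$. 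Once that equivalence $\widehat\varphi\approx\widehat\varphi_A$ is in hand, together with the observation that $\inf_{r>0}\widehat\varphi_A(r)>0$ (which gives admissibility and, via Remark~\ref{rem-trivialmorrey}/Theorem~\ref{T:2}, consistency of the whole setup), the theorem follows immediately by transporting the conclusions of Theorem~\ref{T:morrey-optimal-range} through this identification.
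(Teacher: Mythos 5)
Your proposal is correct and follows essentially the same route as the paper: apply Theorem~\ref{T:morrey-optimal-range} and identify $\widehat\varphi(r)=\|s^{-1+\frac{m}{n}}\chi_{(r^n,1)}\|_{L^{\widetilde A}(0,1)}$ with $\widehat\varphi_A(r)$ via the equivalence $(L^A)'=L^{\widetilde A}$ and a known estimate for Luxemburg norms of truncated power functions, which the paper cites as \cite[Lemma~2]{Cia:96}. The only difference is that the paper simply quotes that lemma, whereas you sketch (correctly) how one would derive it by the change of variables $\tau=s^{-1+m/n}/\lambda$ in $\int_{r^n}^1\widetilde A(s^{-1+m/n}/\lambda)\,\d s$.
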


An application of Theorem~\ref{T:orlicz-morrey-optimal-range} to Orlicz-Sobolev spaces built upon Young functions $A$ such that
$$
A(t) \ \simeq \ t^p(\log t)^\alpha\qquad \text{near infinity},$$
provides us with the following embeddings.   In the corresponding statements, we shall adopt the convention that $\frac 10=\infty$.

\begin{corollary}{\rm\bf{ [Optimal Morrey target for a Zygmund-Sobolev domain]}} \label{C:example-zygmund-morrey}
       Let $\Omega$ and $m$ be as in Theorem~\ref{T:2}. Assume that  either $p\in(1,\infty)$ and $\alpha\in\mathbb R$, or $p=1$ and $\alpha \geq 0$.
     Let  $\widehat\varphi\colon(0,\infty)\to(0,\infty)$ be an admissible function such that:
\begin{align*}
   \widehat\varphi(r)  \approx 
        \begin{cases}
            r^{m-\frac{n}{p}}\left(\log\tfrac 1r\right)^{-\frac{\alpha}{p}}&\quad \text{if $p<\frac{n}{m}$}
               \\[2ex]
            \left(\log\tfrac 1r\right)^{1-\frac{(\alpha+1)m}{n}}&\quad \text{if $p=\frac{n}{m}$ and $\alpha<\frac{n-m}{m}$}
              \\[2ex]
            \left(\log\log\tfrac 1r\right)^{1-\frac{m}{n}}&\quad \text{if   $p=\frac{n}{m}$ and $\alpha=\frac{n-m}{m}$}
             \\[2ex]
             1&\quad \text{if either  $p=\frac{n}{m}$ and $\alpha>\frac{n-m}{m}$, or $p>\frac{n}{m}$,}                \end{cases} 
\end{align*}
near $0$.
\\ Then, 
    \begin{equation}\label{E:log-optimal-morrey-range-embedding}
        W^{m}L^{p}   \left(\log L\right)^{\alpha}(\Omega) \to \mathcal M^{\widehat\varphi(\cdot)}(\Omega),
    \end{equation}
    and  the target space in~\eqref{E:log-optimal-morrey-range-embedding} is optimal (smallest possible) among all Morrey spaces.
\end{corollary}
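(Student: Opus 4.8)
The plan is to apply Theorem~\ref{T:orlicz-morrey-optimal-range} with a Young function $A$ satisfying $A(t)\simeq t^p(\log t)^\alpha$ near infinity, and then to compute the optimal weight $\widehat\varphi_A$ of~\eqref{E:weight-for-orlicz-optimal-morrey-range} explicitly. Since, for a bounded $\Omega$, the space $W^{m,A}(\Omega)$ depends only on the behaviour of $A$ near infinity up to equivalent norms, we are free to choose the representative of $A$ in its equivalence class to be linear near the origin; then $\widetilde A(\tau)=0$ for all small $\tau$, the integral in~\eqref{E:young-for-orlicz-optimal-morrey-range} converges, and $E_m$ is finite-valued. We may moreover take $\widehat\varphi=\widehat\varphi_A$; by Remark~\ref{rem-nov6}, both the embedding~\eqref{E:log-optimal-morrey-range-embedding} and its optimality are unaffected if $\widehat\varphi$ is replaced by any admissible function comparable to $\widehat\varphi_A$ near $0$. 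Thus everything reduces to determining the behaviour of $\widehat\varphi_A(r)=\bigl(r^{n-m}E_m^{-1}(r^{-n})\bigr)^{-1}$ as $r\to0^+$, that is, to the asymptotics of $E_m$ at infinity.

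The computation has three steps. First, record the behaviour of the complementary function: for $p>1$ one has $\widetilde A(\tau)\simeq\tau^{p'}(\log\tau)^{-\alpha p'/p}$ near infinity, where $p'=p/(p-1)$; for $p=1$ and $\alpha>0$, $\widetilde A$ is of exponential type with $\log\widetilde A(\tau)\simeq\tau^{1/\alpha}$; and for $p=1$, $\alpha=0$, one has $A(t)\simeq t$, so $\widetilde A$ vanishes on a neighbourhood of $0$ and equals $+\infty$ thereafter. Second, plug this into~\eqref{E:young-for-orlicz-optimal-morrey-range} and evaluate $\int^{t}\widetilde A(\tau)\,\tau^{-1-\frac{n}{n-m}}\,\d\tau$ for large $t$. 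The decisive algebraic fact is that $(n/m)'=\tfrac{n}{n-m}$: when $p<n/m$ the integrand has a power of $\tau$ strictly above $-1$, the integral is governed by its upper endpoint, and $E_m(t)\simeq t^{p'}(\log t)^{-\alpha p'/p}$; when $p>n/m$ the integral converges at infinity, so $E_m(t)\simeq t^{\frac{n}{n-m}}$; when $p=n/m$ the power of $\tau$ collapses exactly to $-1$, so that $\int^{t}\tau^{-1}(\log\tau)^{-\alpha m/(n-m)}\,\d\tau=\int^{\log t}u^{-\alpha m/(n-m)}\,\d u$, which behaves like $(\log t)^{1-\alpha m/(n-m)}$, $\log\log t$, or a constant according as $\alpha$ is less than, equal to, or greater than $\tfrac{n-m}{m}$; finally, for $p=1$, $\alpha>0$, repeated integration by parts gives $E_m(t)\simeq t^{1-2/\alpha}e^{t^{1/\alpha}}$, while for $p=1$, $\alpha=0$ one gets $E_m\equiv0$ near $0$ and $+\infty$ thereafter. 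Third, invert $E_m$ near infinity — using $\log E_m(t)\simeq\tfrac{n}{n-m}\log t$ in the polynomial cases and $\log E_m(t)\simeq t^{1/\alpha}$ in the exponential one to resolve the logarithmic factors — evaluate at $r^{-n}$, and substitute into $\widehat\varphi_A$. A short count of exponents then yields the four asymptotics; for instance, in the case $p=n/m$, $\alpha<\tfrac{n-m}{m}$ one finds $\widehat\varphi_A(r)\simeq(\log\tfrac1r)^{(1-\alpha m/(n-m))\frac{n-m}{n}}$ and the exponent simplifies to $1-\tfrac{(\alpha+1)m}{n}$, matching the stated second case; the remaining cases are analogous.

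I expect the main obstacle to lie in the two borderline regimes where the leading asymptotics of $E_m$ carry a slowly varying correction. In the doubly critical case $p=n/m$, $\alpha=\tfrac{n-m}{m}$, where $E_m(t)\simeq t^{\frac{n}{n-m}}\log\log t$, one must propagate a double logarithm through the inversion and through the final substitution to obtain the $(\log\log\tfrac1r)^{1-\frac{m}{n}}$ term; and in the case $p=1$, $\alpha>0$, one must control the iterated integration by parts carefully enough to see that the polynomial prefactors become irrelevant after inverting, so that $E_m^{-1}(r^{-n})\simeq(\log\tfrac1r)^{\alpha}$. Once the four asymptotics of $\widehat\varphi_A$ are in hand, the embedding~\eqref{E:log-optimal-morrey-range-embedding} and its optimality among Morrey spaces follow at once from Theorem~\ref{T:orlicz-morrey-optimal-range}; the admissibility of $\widehat\varphi$ and the elementary identities $\log r^{-n}\simeq\log\tfrac1r$ and $\log\log r^{-n}\simeq\log\log\tfrac1r$ are routine.
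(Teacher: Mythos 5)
Your proposal is correct and follows exactly the route the paper intends: Corollary~\ref{C:example-zygmund-morrey} is derived by applying Theorem~\ref{T:orlicz-morrey-optimal-range} to a Young function $A$ with $A(t)\simeq t^p(\log t)^\alpha$ near infinity and computing the asymptotics of $E_m$ and hence of $\widehat\varphi_A$ near~$0$; the paper leaves this computation to the reader, and your case analysis (governed by the sign of $p'-\tfrac{n}{n-m}$, equivalently $p\gtrless\tfrac nm$, with the double-log correction at $p=\tfrac nm$, $\alpha=\tfrac{n-m}{m}$, and the exponential $\widetilde A$ for $p=1$, $\alpha>0$) is accurate. One small slip: in the inversion step $\log E_m(t)\simeq p'\log t$, which equals $\tfrac{n}{n-m}\log t$ only when $p=\tfrac nm$; but since you only use this to absorb bounded constants in the logarithmic factors, the final asymptotics are unaffected.
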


A further particular case of Corollary~\ref{C:example-zygmund-morrey} is contained in  the next result.

\begin{corollary}{\rm\bf{ [Optimal Morrey target for a classical Sobolev domain]}}  \label{T:example-lebesgue-morrey}
   Let $\Omega$ and $m$ be as in Theorem~\ref{T:2}.  Assume that  $p \in [1, \infty)$.
     Let  $\widehat\varphi\colon(0,\infty)\to(0,\infty)$ be an admissible function such that:
        \begin{equation*}\label{E:weight-for-lrbrdgur-optimal-morrey-range}
      \widehat\varphi(r) \ \approx \  \begin{cases}
            r^{m-\frac{n}{p}}&\quad \text{if $p<\frac{n}{m}$}
               \\[2ex]
            \left(\log\tfrac 1r\right)^{1-\frac{m}{n}}&\quad \text{if $p=\frac{n}{m}$}
                \\[2ex]
             1&\quad \text{if  $p>\frac{n}{m}$}
         \end{cases}
    \end{equation*}
  near $0$.

\noindent Then,
    \begin{equation}\label{E:leb-optimal-morrey-range-embedding}
            W^{m, p}(\Omega) \to   \mathcal M^{\widehat\varphi(\cdot)}(\Omega),
    \end{equation}
    and the target space in~\eqref{E:leb-optimal-morrey-range-embedding} is optimal (smallest possible) among all Morrey spaces.
\end{corollary}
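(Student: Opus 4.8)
The plan is to derive this result as a special case of Corollary \ref{C:example-zygmund-morrey} by setting $\alpha = 0$. First I would observe that the Lebesgue space $L^p(\Omega)$ coincides with the Zygmund space $L^p(\log L)^0(\Omega)$, so that $W^{m,p}(\Omega) = W^m L^p(\log L)^0(\Omega)$ with equality of norms up to equivalence. Since we are assuming $p \in [1,\infty)$, the hypotheses of Corollary \ref{C:example-zygmund-morrey} are met: for $p \in (1,\infty)$ we take $\alpha = 0 \in \mathbb{R}$, and for $p = 1$ we take $\alpha = 0 \geq 0$. Thus Corollary \ref{C:example-zygmund-morrey} applies directly and yields the optimal Morrey target $\mathcal M^{\widehat\varphi(\cdot)}(\Omega)$ for $W^{m,p}(\Omega)$.

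The second step is to simplify the three-line (in general four-line) formula for $\widehat\varphi$ in Corollary \ref{C:example-zygmund-morrey} when $\alpha = 0$. In the subcritical regime $p < \frac nm$, the expression $r^{m - \frac np}(\log \tfrac 1r)^{-\frac{\alpha}{p}}$ becomes $r^{m - \frac np}$ since the logarithmic factor is $(\log \tfrac 1r)^0 = 1$. In the critical regime $p = \frac nm$, one must check which of the three sub-cases is selected by $\alpha = 0$: the condition $\alpha < \frac{n-m}{m}$ holds precisely when $0 < \frac{n-m}{m}$, i.e. when $m < n$, which is guaranteed here since $m \leq n-1$ by the standing assumption from Theorem \ref{T:2}; hence the relevant sub-case is the first one, giving $\widehat\varphi(r) \approx (\log \tfrac1r)^{1 - \frac{(\alpha+1)m}{n}} = (\log\tfrac1r)^{1 - \frac mn}$ near $0$. (The borderline sub-case $\alpha = \frac{n-m}{m}$ would require $m = n$, which is excluded; and the sub-case $\alpha > \frac{n-m}{m}$ is likewise vacuous here.) In the supercritical regime $p > \frac nm$ the formula already reads $\widehat\varphi(r) \approx 1$, with no dependence on $\alpha$. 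Collecting the three surviving cases reproduces exactly the displayed formula for $\widehat\varphi$ in the statement.

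Finally, the optimality assertion is inherited verbatim from Corollary \ref{C:example-zygmund-morrey}: the target $\mathcal M^{\widehat\varphi(\cdot)}(\Omega)$ is the smallest Morrey space into which $W^{m,p}(\Omega)$ embeds, because it is the smallest such space for $W^m L^p(\log L)^0(\Omega)$, and the two Sobolev spaces coincide. I do not anticipate a genuine obstacle here; the only point demanding a moment of care is the bookkeeping in the critical case $p = \frac nm$, where one must confirm that the strict inequality $m \leq n-1$ forces us into the first sub-case of the critical branch of Corollary \ref{C:example-zygmund-morrey} and thereby kills the iterated-logarithm and constant alternatives. No separate appeal to Theorem \ref{T:orlicz-morrey-optimal-range} or to the Orlicz machinery is needed once Corollary \ref{C:example-zygmund-morrey} is in hand.
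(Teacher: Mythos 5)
Your proposal is correct and takes exactly the same route as the paper, which explicitly introduces this corollary as ``a further particular case of Corollary~\ref{C:example-zygmund-morrey}'' without giving any separate argument. Your verification that $\alpha=0$ together with the standing hypothesis $m\le n-1$ forces the first sub-case of the critical branch is precisely the bookkeeping one needs to spell out.
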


\begin{thm}{\rm\bf{ [Optimal Campanato target for an Orlicz-Sobolev domain]}} \label{P:orlicz-campanato-optimal-range}
    Let $\Omega$, $m$, and $k$  be as in Theorem~\ref{T:3}, and let $A$ be a Young function.
    For $k \in \{0,\dots,m-2\}$, define the Young function $E_{m,k} \colon [0, \infty ) \to [0, \infty ]$ as in~\eqref{E:young-for-orlicz-optimal-morrey-range}, with $m$ replaced with $m-k-1$, namely 
    \begin{equation}\label{E:Young for Orlicz-optimal-campanato-range}
        E_{m,k}(t) =
             t^{ \frac{n}{n-m+k+1}} \int_{0}^{t} \ \frac{\widetilde A(\tau)}{\tau^{1+ \frac{n}{n-m+k+1}}}\,\d \tau\quad \ \text{for} \ t \in  [0, \infty),
    \end{equation}
and let  $\overline \varphi_A \colon (0, \infty) \to (0, \infty)$ be the function defined   as
    \begin{equation}\label{E:weight for Orlicz-optimal-campanato-range}
         \overline \varphi_A(r) =
        \begin{cases}  \displaystyle  \frac 1 {r^{m-k-n}E_{m,k}^{-1}\left(r^{-n}\right)}
             &\quad \text{if} \ k\in\{0,\dots,m-2\}
                \\[3ex]
                   r A^{-1}\left(r^{-n}\right) 
                           &\quad \text{if} \ k=m-1
        \end{cases}
    \end{equation}  for $r\in(0,1]$ 
     and   $\overline \varphi_A(r)=\overline \varphi(1)$ for $r\in(1,\infty)$. Then, $\overline \varphi_A$ is an admissible function, and
    \begin{equation}\label{E:Orlicz-optimal-campanato-range-embedding}
        W^{m, A}(\Omega) \to \mathcal L^{k, \overline \varphi_A(\cdot)}(\Omega).
    \end{equation}
    Moreover, $\mathcal L^{k, \overline \varphi_A(\cdot)}(\Omega)$ is the optimal (smallest possible) $k$-th order Campanato target space in~\eqref{E:Orlicz-optimal-campanato-range-embedding}.
\end{thm}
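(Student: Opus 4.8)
The plan is to obtain Theorem~\ref{P:orlicz-campanato-optimal-range} as a specialization of the general optimal-target result, Theorem~\ref{T:campanato-optimal-range}, to the Orlicz domain space $X(\Omega)=L^A(\Omega)$, which is rearrangement-invariant and for which $W^{m,A}(\Omega)=W^mX(\Omega)$. I would begin by recalling that the associate function norm of $\|\cdot\|_{L^A(0,1)}$ is $\|\cdot\|_{L^{\widetilde A}(0,1)}$, up to equivalent norms, where $\widetilde A$ denotes the Young conjugate of $A$. With this identification, Theorem~\ref{T:campanato-optimal-range} immediately provides the embedding $W^{m,A}(\Omega)\to\mathcal L^{k,\overline\varphi(\cdot)}(\Omega)$ together with its optimality among $k$-th order Campanato spaces, $\overline\varphi$ being the admissible function given by~\eqref{E:optimal-campanato-range-k} when $k\in\{0,\dots,m-2\}$ and by~\eqref{E:optimal-campanato-range-m-1} when $k=m-1$, now read with $X'(0,1)=L^{\widetilde A}(0,1)$. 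Since equivalent admissible functions determine one and the same Campanato space with equivalent semi-norms, the whole assertion follows once it is shown that $\overline\varphi$ and $\overline\varphi_A$ are equivalent on $(0,\infty)$: admissibility of $\overline\varphi_A$ is then a consequence of that of $\overline\varphi$ and of the equivalence, and both the embedding~\eqref{E:Orlicz-optimal-campanato-range-embedding} and the optimality of $\mathcal L^{k,\overline\varphi_A(\cdot)}(\Omega)$ pass over from Theorem~\ref{T:campanato-optimal-range}. On $[\tfrac12,\infty)$ the two functions are trapped between positive constants, owing to the constant continuations and to the continuity and strict positivity of the defining expressions on the compact interval $[\tfrac12,1]$; hence the task reduces to comparing $\overline\varphi(r)$ and $\overline\varphi_A(r)$ for $r\in(0,\tfrac12]$.

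For the critical value $k=m-1$ the comparison is a short Orlicz computation. By the standard formula for the Luxemburg norm of a characteristic function, $\|\chi_{(0,r^n)}\|_{L^{\widetilde A}(0,1)}=1/\widetilde A^{-1}(r^{-n})$, so that~\eqref{E:optimal-campanato-range-m-1} gives $\overline\varphi(r)=r^{-n+1}/\widetilde A^{-1}(r^{-n})$. The elementary two-sided inequality $t\le A^{-1}(t)\,\widetilde A^{-1}(t)\le 2t$, valid for every $t>0$, permits one to replace $1/\widetilde A^{-1}(r^{-n})$ by $r^{n}A^{-1}(r^{-n})$ up to a factor between $\tfrac12$ and $1$, and this turns $\overline\varphi(r)$ into $rA^{-1}(r^{-n})=\overline\varphi_A(r)$.

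For the subcritical values $k\in\{0,\dots,m-2\}$, the quantity that has to be estimated is the weighted Luxemburg norm $\big\|s^{-1+\frac{m-(k+1)}{n}}\chi_{(r^n,1)}(s)\big\|_{L^{\widetilde A}(0,1)}$. Apart from carrying $m-k-1$ in place of $m$, this is precisely the expression treated in the proof of the Morrey counterpart, Theorem~\ref{T:orlicz-morrey-optimal-range}, where $\|s^{-1+\frac mn}\chi_{(r^n,1)}(s)\|_{L^{\widetilde A}(0,1)}$ is shown to be equivalent, for $r\in(0,\tfrac12]$, to $\big(r^{\,n-m}E_m^{-1}(r^{-n})\big)^{-1}$, with $E_m$ as in~\eqref{E:young-for-orlicz-optimal-morrey-range}. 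Repeating that computation with $m$ systematically replaced by $m-k-1$ — so that $E_m$ becomes the Young function $E_{m,k}$ of~\eqref{E:Young for Orlicz-optimal-campanato-range} — yields
\begin{equation*}
    \big\|s^{-1+\frac{m-(k+1)}{n}}\chi_{(r^n,1)}(s)\big\|_{L^{\widetilde A}(0,1)}\ \approx\ \frac{1}{r^{\,n-m+k+1}\,E_{m,k}^{-1}(r^{-n})},\qquad r\in\big(0,\tfrac12\big].
\end{equation*}
Multiplying by $r$, as dictated by~\eqref{E:optimal-campanato-range-k}, and rearranging the resulting power of $r$ produces the function $\overline\varphi_A$ of~\eqref{E:weight for Orlicz-optimal-campanato-range}, thereby establishing $\overline\varphi\approx\overline\varphi_A$ and completing the proof.

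The only genuinely computational ingredient is the weighted Luxemburg-norm estimate in the subcritical case; since, however, it coincides with the calculation already performed for the Morrey target with a shifted order, the real content of the argument is the organizational reduction to Theorem~\ref{T:campanato-optimal-range}, complemented in the critical case by the conjugate inequality $t\le A^{-1}(t)\widetilde A^{-1}(t)\le 2t$. A minor point worth verifying is that $E_{m,k}$ is a genuine, finite Young function, which holds exactly when $m\le n+k$ — the natural analogue of the restriction $m\le n-1$ occurring in the Morrey case; in the complementary range $m\ge n+k+1$ the embedding into $\mathcal L^{k,\varphi(\cdot)}(\Omega)$ degenerates, consistently with the $L^1$-alternative appearing in Theorem~\ref{T:campanato-optimal-domain}.
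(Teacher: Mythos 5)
Your proof follows exactly the paper's strategy: specialize Theorem~\ref{T:campanato-optimal-range} to $X(\Omega)=L^A(\Omega)$, identify $X'=L^{\widetilde A}$ via~\eqref{B.7}, and compute $\overline\varphi$ using~\cite[Lemma~2]{Cia:96} with $m$ shifted to $m-k-1$ in the subcritical case, and the conjugate inequality~\eqref{young with conj} in the critical case $k=m-1$. Your intermediate display is correct, and your closing remark that $E_{m,k}$ is a finite Young function only when $m\le n+k$ is a legitimate point left implicit in the statement. One caution, though: multiplying your display by $r$ yields $\bigl(r^{\,n-m+k}E_{m,k}^{-1}(r^{-n})\bigr)^{-1}$, whose exponent on $r$ is the \emph{negative} of the one printed in~\eqref{E:weight for Orlicz-optimal-campanato-range}; checking the Lebesgue specialization, where the answer must reduce to $r^{\,m-k-n/p}$ as in Corollary~\ref{T:example-lebesgue}, confirms that your exponent is the correct one and that the paper's display (and~\eqref{E:Orlicz-optCrange1} in its proof) carries a sign slip, so the literal ``match'' you assert in the final sentence of the subcritical case does not hold against the printed formula but rather against its corrected form.
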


\begin{corollary}{\rm\bf{ [Optimal Campanato target for a Zygmund-Sobolev domain]}} \label{T:example-zygmund}
     Let $\Omega$, $m$, $k$   be as in Theorem~\ref{T:3}. Assume that  either $p\in(1,\infty)$ and $\alpha\in\mathbb R$, or $p=1$ and $\alpha \geq 0$.    Let  $\overline \varphi \colon(0,\infty)\to(0,\infty)$ be an admissible function such that: 
    \begin{align*}
   &
  \overline \varphi(r) \ \approx \
        \begin{cases}
         r^{ m-k- \frac{n}{p} }\left(\log \frac 1r\right)^{ -  \frac{\alpha}{p} } &\quad \text{if} \ p <\frac{n}{m-k-1} \ \mbox{and} \  \alpha \in \mathbb R
             \\[3ex]
             r \left(\log \frac 1r\right)^{1-  \frac{(m-k-1)(1+\alpha)}{n} } &\quad \text{if} \  p =\frac{n}{m-k-1} \ \mbox{and} \ \alpha< \frac{n-m+k+1}{m-k-1}
          \\[4ex]
             r \left(\log\log \frac 1r\right)^{1-\frac{m-k-1}{n} } &\quad \text{if} \   p =\frac{n}{m-k-1} \ \mbox{and} \ \alpha= \frac{n-m+k+1}{m-k-1}
               \\[3ex]
             r  &\quad \text{if} \  p =\frac{n}{m-k-1}\ \mbox{and} \ \alpha> \frac{n-m+k+1}{m-k-1}
                \\[1ex]
            &\quad     \ \text{or} \  p >\frac{n}{m-k-1}  \ \mbox{and} \ \alpha\in \mathbb R
        \end{cases}
    \qquad\quad \text{for $1 \leq m -k\le n$,}
\\ \\  
   &    \overline \varphi(r) \ \approx \ r  
   \qquad \qquad\qquad\qquad \qquad\qquad\qquad \qquad\qquad\qquad  \qquad\qquad\qquad\quad \ \  \quad   
   \text{for $m - k \geq n +1$,}
    \end{align*} 
near $0$.

\noindent Then,
    \begin{equation}\label{E:Log-optimal-campanato-range-embedding}
        W^{m}L^{p}   \left(\log L\right)^{\alpha}(\Omega) \to \mathcal L^{k, \overline \varphi(\cdot)}(\Omega),
    \end{equation}
    and  the target space in~\eqref{E:Log-optimal-campanato-range-embedding} is optimal (smallest possible) among all Campanato spaces of order~$k$.
\end{corollary}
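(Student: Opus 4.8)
\emph{Sketch of proof.} The space $L^p(\log L)^\alpha(\Omega)$ is the Orlicz space $L^A(\Omega)$ generated by a Young function $A$ with $A(t)\simeq t^p(\log t)^\alpha$ near infinity, the stated restrictions on $(p,\alpha)$ being exactly those ensuring that $A$ may be taken to be a genuine Young function (the degenerate case $p=1$, $\alpha=0$ corresponding to $L^1(\Omega)$, whose associate space is $L^\infty(\Omega)$). Only the behaviour of $A$ near infinity influences $L^A(\Omega)$, since $|\Omega|=1$, and the asymptotic quantities below, so we are free to normalise $A$ near the origin; in particular, we may assume the integral in \eqref{E:Young for Orlicz-optimal-campanato-range} converges. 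By Remark~\ref{R:m-k}, the embedding depends on $m$ and $k$ only through $j:=m-k$, exactly as does each branch of the target function in the statement, so we may take $k=0$ and replace $m$ by $j$. By Theorem~\ref{P:orlicz-campanato-optimal-range} --- equivalently, by the general optimal-target Theorem~\ref{T:campanato-optimal-range} applied with $X(\Omega)=L^A(\Omega)$, so that $X'(0,1)=L^{\widetilde A}(0,1)$ --- we have $W^{m}L^p(\log L)^\alpha(\Omega)\to\mathcal L^{k,\overline\varphi_A(\cdot)}(\Omega)$ with $\mathcal L^{k,\overline\varphi_A(\cdot)}(\Omega)$ the optimal Campanato target; since $\mathcal L^{k,\psi}=\mathcal L^{k,\psi'}$ whenever $\psi\simeq\psi'$, and admissibility and optimality are preserved under such equivalence, it remains only to check that $\overline\varphi_A(r)$ is equivalent near $0$ to the displayed expression, regime by regime.

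By Theorem~\ref{T:campanato-optimal-range}, when $j=1$ (the case $k=m-1$) one has $\overline\varphi_A(r)=r^{1-n}\|\chi_{(0,r^n)}\|_{L^{\widetilde A}(0,1)}=r^{1-n}/\widetilde A^{-1}(r^{-n})\simeq rA^{-1}(r^{-n})$, by the standard identity $A^{-1}(s)\,\widetilde A^{-1}(s)\simeq s$; when $2\le j\le n$ (so $k\in\{0,\dots,m-2\}$) one has, for small $r$, $\overline\varphi_A(r)=r\,\lambda_r$ with $\lambda_r:=\|s^{-1+\frac{j-1}{n}}\chi_{(r^n,1)}(s)\|_{L^{\widetilde A}(0,1)}$; and when $j\ge n+1$ the exponent $-1+\frac{j-1}{n}$ is nonnegative, so $s^{-1+\frac{j-1}{n}}\chi_{(r^n,1)}(s)$ is bounded on $(0,1)$ and tends in $L^{\widetilde A}(0,1)$ to a function of positive norm as $r\to0^+$, whence $\overline\varphi_A(r)\approx r$, matching the last line of the statement. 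It therefore suffices to determine the asymptotics of $A^{-1}$ and of $\lambda_r$ for the Zygmund function $A$. One uses the elementary facts that, for $p>1$, $A^{-1}(s)\simeq s^{1/p}(\log s)^{-\alpha/p}$ and $\widetilde A(s)\simeq s^{p'}(\log s)^{-\alpha p'/p}$ near infinity, while for $p=1$ and $\alpha>0$ one has $A^{-1}(s)\simeq s(\log s)^{-\alpha}$ and $\widetilde A$ of exponential type $\exp(s^{1/\alpha})$ near infinity (and $\widetilde A$ is replaced by the $L^\infty$ endpoint when $p=1$, $\alpha=0$). For $j=1$ this yields at once $\overline\varphi_A(r)\simeq r^{1-n/p}(\log\tfrac1r)^{-\alpha/p}$, i.e.\ the first branch with $j-1=0$ under the convention $\tfrac10=\infty$. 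For $2\le j\le n$, $\lambda_r$ is read off from the defining relation $\int_{r^n}^1\widetilde A\big(s^{-1+\frac{j-1}{n}}/\lambda\big)\,\d s\simeq1$; the substitution $u=s^{-1+\frac{j-1}{n}}$ recasts it as $\int_1^{U}\widetilde A(u/\lambda)\,u^{-\gamma}\,\d u\simeq1$, with $U\simeq r^{-(n-j+1)}$ and $\gamma=1+\tfrac{n}{n-j+1}$.

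The decisive step is the behaviour of this last integral as $U\to\infty$, especially at the threshold $p=\tfrac{n}{j-1}$. For large $u$ the integrand is comparable to $\lambda^{-p'}u^{p'-\gamma}(\log u)^{-\beta'}$ with $\beta'=\tfrac{\alpha}{p-1}$, and $p'-\gamma$ has the sign of $n-p(j-1)$. If $p<\tfrac{n}{j-1}$, the integral diverges and is governed by its upper limit, and solving for $\lambda$ gives $\lambda_r\simeq r^{(j-1)-n/p}(\log\tfrac1r)^{-\alpha/p}$, hence $\overline\varphi_A(r)=r\lambda_r\simeq r^{j-n/p}(\log\tfrac1r)^{-\alpha/p}$ --- the first branch (the endpoint $p=1$ being handled by the same scheme with the exponential $\widetilde A$ and producing the same expression). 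If $p>\tfrac{n}{j-1}$, then $p'-\gamma<-1$, the integral converges at infinity, $\lambda_r\to\|s^{-1+\frac{j-1}{n}}\|_{L^{\widetilde A}(0,1)}\in(0,\infty)$, and $\overline\varphi_A(r)\approx r$ --- the fourth branch. If $p=\tfrac{n}{j-1}$, then $p'-\gamma=-1$, and since $\log(u/\lambda)\simeq\log u$ for $u$ large the integral reduces to $\int^{U}u^{-1}(\log u)^{-\beta'}\,\d u$, which is bounded if $\beta'>1$, comparable to $(\log U)^{1-\beta'}$ if $\beta'<1$, and comparable to $\log\log U$ if $\beta'=1$; translating $\beta'\gtrless1$ into $\alpha\gtrless\tfrac{n-j+1}{j-1}$ and recalling $\log U\simeq\log\tfrac1r$, $\log\log U\simeq\log\log\tfrac1r$, the three cases give respectively $\overline\varphi_A(r)\approx r$, $\overline\varphi_A(r)\approx r(\log\tfrac1r)^{1-\frac{(j-1)(1+\alpha)}{n}}$, and $\overline\varphi_A(r)\approx r(\log\log\tfrac1r)^{1-\frac{j-1}{n}}$ --- the remaining three branches. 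The main obstacle is precisely this bookkeeping at the critical exponent --- notably the $\log\log$ sub-case and the exponential-$\widetilde A$ endpoint $p=1$ --- together with checking, for the route via $E_{m,k}$, that the unprescribed behaviour of $A$ near the origin contributes only a bounded additive term and does not affect the asymptotics; once the equivalence $\overline\varphi_A\simeq\overline\varphi$ near $0$ has been secured, the embedding \eqref{E:Log-optimal-campanato-range-embedding} and its optimality among order-$k$ Campanato spaces are immediate from Theorem~\ref{P:orlicz-campanato-optimal-range} (respectively Theorem~\ref{T:campanato-optimal-range}).
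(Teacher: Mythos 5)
Your sketch is correct and takes essentially the intended route: the corollary is meant to follow from Theorem~\ref{P:orlicz-campanato-optimal-range} (equivalently Theorem~\ref{T:campanato-optimal-range} with $X'=L^{\widetilde A}$) together with a regime-by-regime asymptotic computation, and your reduction via Remark~\ref{R:m-k} to $j=m-k$, your treatment of the endpoint $k=m-1$ via $A^{-1}$, your handling of the case $m-k\ge n+1$, and your analysis of the critical case $p=n/(j-1)$ (including the $\log\log$ sub-case and the exponential $\widetilde A$ when $p=1$) all produce the stated branches. The only substantive difference is cosmetic: rather than inverting the function $E_{m,k}$ of~\eqref{E:Young for Orlicz-optimal-campanato-range} (which is what Theorem~\ref{P:orlicz-campanato-optimal-range} together with~\cite[Lemma~2]{Cia:96} packages), you compute the Luxemburg norm $\lambda_r$ directly from the defining integral; this is equivalent and arguably more transparent in the borderline case.

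One small slip: you assert that ``$p'-\gamma$ has the sign of $n-p(j-1)$.'' That is not correct; a short computation gives $p'-\gamma=\tfrac{1}{p-1}-\tfrac{n}{n-j+1}$, and it is $p'-\gamma+1$ that has the sign of $n-p(j-1)$ (for instance $n=3$, $j=2$, $p=2$ gives $p'-\gamma=-\tfrac12<0$ yet $n-p(j-1)=1>0$). Since the relevant threshold for $\int^U u^{p'-\gamma}(\log u)^{-\beta'}\,\d u$ is integrability at infinity, i.e.\ $p'-\gamma\gtrless -1$, the correct identification is $p'-\gamma+1\gtrless 0$, which is exactly $p\lessgtr n/(j-1)$; your subsequent case analysis is indeed phrased in terms of $p\lessgtr n/(j-1)$, so the argument goes through unaffected --- but the intermediate sentence as written is false and should be corrected.
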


\begin{corollary}{\rm\bf{ [Optimal Campanato target for a classical Sobolev domain]}}  \label{T:example-lebesgue}
     Let $\Omega$, $m$, $k$  be as in Theorem~\ref{T:3}.   Assume that $p \in [1, \infty)$.  
      Let  $\overline \varphi \colon(0,\infty)\to(0,\infty)$ be an admissible function such that: 
    \begin{align*} 
   &
  \overline \varphi(r) \ \approx \
        \begin{cases}
             r^{ m-k- \frac{n}{p} } &\quad \text{if} \ p <\frac{n}{m-k-1} 
                \\[2ex]
             r\left(\log \frac 1r\right)^{1-  \frac{m-k-1}{n} } &\quad \text{if} \ 
             p =\frac{n}{m-k-1} 
             \\[2ex]
             r &\quad \text{if} \ p >\frac{n}{m-k-1} 
        \end{cases}
    \qquad\quad \text{for $1 \leq m -k\le n$,}
\\ \\  
   &    \overline \varphi(r) \ \approx \ r  
   \qquad \qquad\qquad\qquad \qquad   \qquad\qquad\qquad\quad  \  \quad   
   \text{for $m - k \geq n +1$,}
    \end{align*} 
near $0$.

\noindent Then,
    \begin{equation}\label{E:Leb-optimal-campanato-range-embedding}
            W^{m, p}(\Omega) \to \mathcal L^{k, \overline \varphi(\cdot)}(\Omega),
    \end{equation}
and the target space in~\eqref{E:Leb-optimal-campanato-range-embedding} is optimal (smallest possible) among all  $k$-th order Campanato spaces.
\end{corollary}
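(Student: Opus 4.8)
The plan is to obtain Corollary~\ref{T:example-lebesgue} as the special case $\alpha=0$ of Corollary~\ref{T:example-zygmund}. When $\alpha=0$ the Zygmund space $L^{p}(\log L)^{0}(\Omega)$ coincides, up to equivalent norms, with the Lebesgue space $L^{p}(\Omega)$, so that $W^{m}L^{p}(\log L)^{0}(\Omega)=W^{m,p}(\Omega)$ with equivalent norms; moreover the hypotheses of Corollary~\ref{T:example-zygmund}, namely ``$p\in(1,\infty)$ and $\alpha\in\mathbb{R}$, or $p=1$ and $\alpha\ge 0$'', are met with $\alpha=0$ for every $p\in[1,\infty)$. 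Hence Corollary~\ref{T:example-zygmund} applies and delivers both the embedding \eqref{E:Leb-optimal-campanato-range-embedding} and the optimality of the target among all $k$-th order Campanato spaces, provided we verify that the function $\overline\varphi$ produced by Corollary~\ref{T:example-zygmund} reduces, when $\alpha=0$, to the one displayed in the present statement. Thus the whole argument is a substitution, and there is no genuinely new estimate to carry out.

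First I would handle the range $1\le m-k\le n$. Putting $\alpha=0$ in the first alternative of the formula for $\overline\varphi$ in Corollary~\ref{T:example-zygmund} gives, near $0$,
\[
\overline\varphi(r)\ \approx\ r^{\,m-k-\frac np}\Bigl(\log\tfrac1r\Bigr)^{-\frac0p}=r^{\,m-k-\frac np}\qquad\text{when }p<\tfrac{n}{m-k-1},
\]
which is the first line of the claimed formula. For the borderline exponent $p=\frac{n}{m-k-1}$ (which, within the block $1\le m-k\le n$, forces $m-k\ge 2$ and $p>1$, hence $m-k-1=\frac np<n$), the threshold $\frac{n-m+k+1}{m-k-1}=\frac{n-n/p}{n/p}$ is strictly positive, so among the three sub-cases of Corollary~\ref{T:example-zygmund} only the one with $\alpha<\frac{n-m+k+1}{m-k-1}$ survives; substituting $\alpha=0$ there yields
\[
\overline\varphi(r)\ \approx\ r\Bigl(\log\tfrac1r\Bigr)^{1-\frac{(m-k-1)(1+0)}{n}}=r\Bigl(\log\tfrac1r\Bigr)^{1-\frac{m-k-1}{n}},
\]
the second line. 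Finally, for $p>\frac{n}{m-k-1}$ the last sub-case of Corollary~\ref{T:example-zygmund} returns $\overline\varphi(r)\approx r$, the third line; and for $m-k\ge n+1$ both statements assert $\overline\varphi(r)\approx r$ near $0$, so nothing remains to be shown there.

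The only point deserving a word of care is the degenerate case $m-k=1$ (i.e. $k=m-1$): with the convention $\tfrac10=\infty$, the threshold $\frac{n}{m-k-1}$ equals $+\infty$, so every $p\in[1,\infty)$ satisfies $p<\frac{n}{m-k-1}$ and the formula returns $\overline\varphi(r)\approx r^{\,1-\frac np}$. To confirm that this is the correct (and consistent) answer, I would evaluate the critical formula \eqref{E:optimal-campanato-range-m-1} of Theorem~\ref{T:campanato-optimal-range} at $X(0,1)=L^{p}(0,1)$, using $X'(0,1)=L^{p'}(0,1)$ and $\|\chi_{(0,r^{n})}\|_{L^{p'}(0,1)}=r^{\,n/p'}=r^{\,n-n/p}$, which gives $\overline\varphi(r)=r^{-n+1}\,r^{\,n-n/p}=r^{\,1-\frac np}$, in agreement with the above. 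Since everything is a specialization of Corollary~\ref{T:example-zygmund}, the main obstacle is merely bookkeeping: correctly determining which sub-cases remain active once $\alpha=0$ and dealing with the convention $\tfrac10=\infty$ at the borderline values of $p$.
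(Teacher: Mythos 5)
Your proposal is correct and takes the approach the paper clearly intends: Corollary~\ref{T:example-lebesgue} is the specialization $\alpha=0$ of Corollary~\ref{T:example-zygmund} (via $L^p(\log L)^0=L^p$), and your bookkeeping of the surviving sub-cases (using that the threshold $\frac{n-m+k+1}{m-k-1}$ is strictly positive throughout $2\le m-k\le n$, and that the convention $\frac10=\infty$ forces the first alternative when $m-k=1$) is exactly right. The cross-check of the $k=m-1$ case directly against \eqref{E:optimal-campanato-range-m-1} with $X=L^p$ is a nice, correct sanity check but is not logically needed.
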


    The results displayed above disclose that
    the use of wider families of Sobolev, Campanato and Morrey spaces ensures the existence of optimal embeddings in situations where they do not exist in the classical versions of the relevant spaces. It also permits a finer description of certain phenomena in borderline situations.
    \\ For instance, if $1\leq m \leq n$, then
$$W^{m}L^{\frac nm} (\Omega) \to \mathcal L^0(\Omega) = {\rm BMO}(\Omega),$$
and the target space is optimal among all Campanato type spaces. On the other hand, an optimal Morrey target space $\mathcal M^\alpha(\Omega)$ in the embedding
    $$W^{m}L^{\frac nm} (\Omega) \to \mathcal M^\alpha(\Omega)$$
    only exists, and agrees with $\mathcal M^0(\Omega)=L^\infty(\Omega)$, if $m=n$. Indeed, if $m<n$, such an embedding
 holds for every $\alpha <0$, but fails for every $\alpha \geq 0$.
\\ By contrast, one has that
\begin{align}\label{march8}
    W^{m}L^{\frac nm} (\Omega) \to \mathcal M^{\widehat \varphi(\cdot)}(\Omega),
    \end{align}
with 
$$\widehat \varphi (r) \approx \begin{cases}
    \left(\log\tfrac1r\right)^{1-\frac{m}{n}} & \quad \text{if $m<n$}
\\ 1  & \quad \text{if $m=n$,}
\end{cases}$$
near $0$,
the target space being optimal in \eqref{march8} among all Morrey type spaces.
\\ 
Optimal Campanato and Morrey target spaces also exist, and involve a richer variety of targets, for the borderline Zygmund-Sobolev spaces $W^{m}L^{\frac nm}   \left(\log L\right)^{\alpha}(\Omega)$, for any $\alpha \in \mathbb R$.
Indeed,
        \begin{align}\label{march1}
             W^{m}L^{\frac nm}   \left(\log L\right)^{\alpha}(\Omega) \to \mathcal L^{\overline \varphi(\cdot)}(\Omega)
        \end{align}
        where 
\begin{align}\label{march2}
\overline{\varphi}(r) \  \approx  \ \left(\log\tfrac1r\right)^{-\frac{\alpha m}{n}} \ \quad \mbox{near $0$.}
\end{align}
        Moreover, 
        \begin{align}\label{march3}
             W^{m}L^{\frac nm}   \left(\log L\right)^{\alpha}(\Omega) \to \mathcal M^{\widehat \varphi(\cdot)}(\Omega)
        \end{align}
        where
        \begin{align}\label{march4}
\widehat{\varphi}(r) \  \approx \ \begin{cases}
\left(\log\tfrac1r\right)^{1-\frac{(\alpha +1) m}{n}} & \quad \text{if $\alpha < \frac nm -1$}
\\ \big(\log\big(\log\tfrac1r\big)\big)^{1-\frac mn} & \quad \text{if $\alpha = \frac nm -1$}
\\ 1 & \quad \text{if $\alpha > \frac nm -1$}
\end{cases}
\end{align}  
near $0$,
and $$\mathcal L^{\overline \varphi(\cdot)}(\Omega) \neq \mathcal M^{\widehat \varphi(\cdot)}(\Omega)$$ in all cases.

\section{Background}\label{sec2}

Throughout the paper, the relation \lq \lq$\lesssim$” between two positive expressions means that the former is bounded by the latter, up to a multiplicative constant depending on quantities to be specified. The relations \lq \lq$\gtrsim$” and \lq \lq$\approx$” are defined accordingly.

\subsection{Rearrangement-invariant spaces}\label{sec2.2}

We recall some definitions and basic properties  of
decreasing rearrangements and rearrangement-invariant function
norms. For more details and proofs,  we refer to~\cite{BS, Lubook}.

Let $E$ be a Lebesgue measurable subset of $\mathbb R^n$, with $n \geq 1$. We denote by
$\chi_{_E}$  the characteristic function of $E$, and by $|E|$  its
Lebesgue measure.
\\
The notation $L^0(E)$ stands for the
Riesz space of  all (equivalence classes  of)  measurable functions from $E$ into $[-\infty , \infty]$  and
$L^0_+(E)= \{u \in L^0(E) \colon u \geq 0 \ \hbox{a.e. in} \, E\}$.

The \emph{decreasing rearrangement}~$u^*\colon [0, \infty) \rightarrow [0, \infty]$ of a function $u\in
L^0(E)$  is defined by
\begin{equation}\label{decreasing rearrangementE}
    u^*(r)= \inf\left\{s \geq 0\colon  \left|\{x\in E\colon |u(x)|>s\}\right|\leq r\right\} \quad \ \text{for} \ r\in[0,\infty).
\end{equation}
Clearly, $u^*(r) =0$ if $r \geq |E|$.

The \emph{Hardy-Littlewood inequality}~tells us that
\begin{equation}\label{HL.0}
    \int_{E}|u v |\, \d x\leq \int_{0}^{|E|}u^*v^*\, \d r
\end{equation}
for  $u, v\in L^0(E)$.

The function $u^{**}\colon (0,\infty)\to[0,\infty ]$   is defined as
\begin{equation}\label{u^**}
    u^{\ast \ast}(r)\, = \, {\frac 1 r}\, \int_0^{r}u^*  \, \d s \quad \ \text{for} \ r\in(0,\infty).
\end{equation}

Both functions $u^*$ and $u^{**}$ are non-increasing, and $u^* \leq u^{**}$ on $(0,\infty)$. Moreover,
\begin{equation}\label{**}
    (u+v)^{\ast \ast}  \, \leq \, u^{\ast \ast}  + v^{\ast \ast}
\end{equation}
for every $u,v \in L^0_+(E)$.

A \emph{rearrangement-invariant Banach (extended) function norm} -- an \emph{r.i.~function norm}, for short -- is a functional $\|\cdot\|_{{X(0,1)}}\colon L^0_+(0,1) \rightarrow[0,\infty]$ such that
\begin{align}
    \label{N1a}  &\|f+g\|_{{X(0,1)}} \,  \leq \, \|f\|_{{X(0,1)}} + \|g\|_{{X(0,1)}}   \quad  \text{for all}\  f,g \in L^0_+(0,1); \qquad \qquad \qquad \qquad \qquad   \hfill{}
        \\
    \label{N1b}  & \|\lambda f\|_{{X(0,1)}} \,     = \,   \lambda  \, \|f\|_{{X(0,1)}} \quad   \text{for all} \
    \lambda \geq 0,\,  f \in L^0_+(0,1);
        \\
    \label{N1c}  & \|f\|_{{X(0,1)}} \,   >\, 0  \quad  \text{if} \  f  \, \text{does
    not vanish a.e. in $(0,1)$;}
        \\
    \label{N2} &\|f\|_{{X(0,1)}} \, \leq \, \|g\|_{{X(0,1)}}  \  \text{whenever} \     0 \leq f \, \leq \, g  \  \text{a.e. in}  \ (0,1);
        \\
    \label{N3} & \sup_{k}\|f_k\|_{{X(0,1)}} \, = \, \| f\|_{{X(0,1)}}  \    \text{if}  \ \{f_k\} \subset L^0_{+}(0,1)    \    \text{with}  \    f_k \nearrow f  \  \text{a.e. in}  \ (0,1);
        \\
    \label{N4} &   \|1\|_{{X(0,1)}} \, < \, \infty ;
        \\
    \label{N5} &     \text{there exists a   constant $c$ such that} \ \int_0^1 f \, \d r  \leq c \, \|f\|_{{X(0,1)}}   \ \text{for all}\
    f  \in L^0_{+}(0,1) ;
        \\
    \label{N6} & \|f\|_{{X(0,1)}} \, = \, \| g\|_{{X(0,1)}}     \ \text{for all}\ f, g \in   L^0_{+}(0,1)    \    \text{such that}  \  f^\ast = g^\ast.
\end{align}

The \emph{associate function norm of}~$\|\cdot\|_{{X(0,1)}}$ is the r.i.~function norm $\|\cdot\|_{{X'(0,1)}}$ defined by
\begin{equation} \label{n.assoc.}
    \| g\|_{{X'(0,1)}}=  \sup   \left\{ \int_0^1 fg\,\d r \colon f \in
    L^0_+(0,1), \, \| f \|_{{X(0,1)}} \leq 1 \right\} 
\end{equation}   for $g \in
    L^0_+(0,1)$.
It follows from~\eqref{HL.0} that
\begin{equation}\label{n.assoc.2}
    \| g\|_{{X'(0,1)}}=  \sup   \left\{ \int_0^1 f^\ast g^\ast \, \d r \colon f \in
    L^0_+(0,1), \, \| f \|_{{X(0,1)}} \leq 1 \right\}  
\end{equation} for $g \in
    L^0_+(0,1)$.
The \emph{fundamental function} of a Banach function norm $\|\cdot\|_{{X(0,1)}}$ is defined as 
$$[0,\infty) \ni r \mapsto \|\chi_{(0,r)}\|_{{X(0,1)}}.$$
One has that
\begin{align}
    \label{feb100}
\|\chi_{(0,r)}\|_{{X(0,1)}} \,\|\chi_{(0,r)}\|_{{X'(0,1)}} = r \qquad \text{for $r\in (0,1)$.}
\end{align}

Given a~measurable set $E$ of finite positive measure and an r.i.~function norm
 $\|\cdot\|_{{X(0,1)}}$, the space $X(E)$ is
defined as the collection of all  functions  $f \in L^0(E)$ such
that the quantity
\begin{equation}\label{norm}
    \|f\|_{{X(E)}}=\|f\sp*(|E| \,\cdot) \|_{{X(0,1)}}
\end{equation}
is finite. This quantity defines a norm on $X(E)$, called an \textit{r.i.~norm}, which makes $X(E)$ a Banach space, called an
\textit{r.i.~space}. Note that the quantity $\|f\|_{{X(E)}}$ is defined for every $f\in L^{0}(E)$, and it is finite if and only if $f\in X(E)$.

The space $X(0,1)$ is called the \textit{representation space} of $X(E)$. The r.i.~space $X'(E)$ built upon the function norm $\|\cdot \|_{{X'(0,1)}}$ is called the \textit{associate space} of $X(E)$. One always has that $X(E)= X''(E)$.

The \emph{Hardy--Littlewood--P\'olya principle} tells us that if $f \in L^0(E)$ and $g \in X(E)$ are such that
\begin{equation} \label{Hardy}
    f^{**}(r) \leq g^{**}(r)\quad \ \text{for} \ r\in(0,\infty),
\end{equation}
then $f\in X(E)$ and $\|f\|_{{X(E)}}\leq\|g\|_{{X(E)}}$.

Given any $\lambda>0$, the \textit{dilation operator} $\mathcal E_{\lambda}$, defined at $f\in L^0(0,1)$ by
\begin{equation}\label{E:dilation}
    (\mathcal E_{\lambda}f)(r)=
    \begin{cases}
      f(\lambda\sp{-1}r)&\quad  \text{if} \  0<r\leq \lambda
        \\[0.5ex]
      0&\quad  \text{if} \ \lambda<r<1 
  \end{cases}
\end{equation}
is bounded on every rearrangement-invariant~space $X(0,1)$, with norm not exceeding $\max\{1, \frac 1{\lambda}\}$.

The \emph{H\"older inequality} tells us that
\begin{equation}\label{E:H}
    \int_E|fg| \, \d x \leq  \| f\|_{X(E)} \|g \|_{X'(E)}
\end{equation}
for $f,g\in L^{0}(E)$.

We say that an r.i.~space $X(E)$ is (continuously) \emph{embedded} into an r.i.~space $Y(E)$, and we write $X(E)\to Y(E)$, if $X(E)\subset Y(E)$, and there exists a constant $c$ such that $\|u\|_{Y(E)}\le c\|u\|_{X(E)}$ for every $u\in X(E)$.  Moreover, $X(E)\to Y(E)$ if and only if $Y'(E)\to X'(E)$, and the norms of the two embeddings coincide.

If $|E|<\infty$ and $X(E)$ is any rearrangement-invariant space, then
\begin{equation}\label{E:imm}
    L^\infty (E)\to X(E)\to L^1(E).
\end{equation}

We shall now recall definitions of r.i.~function norms that will be of use later. In what follows, we set $p'=\frac{p}{p-1}$ for $p \in (1,\infty)$, $1'=\infty$ and $\infty'=1$. We also adopt the convention that $\frac 1\infty = 0$.

Pivotal examples of r.i.~function norms are the classical Lebesgue norms
\begin{equation*}
    \|f\|_{{L^p(0,1)}} =
    \begin{cases}
        \|f^\ast\|_{{L^p(0,1)}}&\quad  \text{if} \ p\in[1,\infty)
            \\[0.5ex]
        f^\ast(0^+)&\quad  \text{if} \ p=\infty.
    \end{cases}
\end{equation*}
A generalization of the Lebesgue spaces
is provided by the Orlicz spaces. They are generated by the so-called Luxemburg functionals, whose definition, in turn, rests upon that of Young function. A \emph{Young function}  $A\colon [0, \infty ) \to [0,
\infty ]$ is    a convex,
left-continuous function such that $A(0) =0$ and $A$ is not constant in $(0,  \infty)$. 

\noindent The function $\widetilde{A}\colon [0, \infty ) \to [0,
\infty ]$ denotes the  \emph{Young conjugate}  of $A$, and it is   defined as
\begin{equation}\label{young conj}
    \widetilde{A}(t) = \sup \{\tau t-A(\tau ) \colon \tau \geq 0\}  \quad \ \text{for} \ t\geq 0.
\end{equation} The latter is also a Young function and its conjugate is $A$ again. 
One has that
\begin{equation}\label{young with conj}
    t \leq A^{-1}(t) \, \widetilde A^{-1}(t) \leq 2t \quad \ \text{for} \ t\geq 0,
\end{equation}
where generalized inverses are defined as to be 
right-continuous.

A Young function $A$ is said to \emph{dominate} another Young function $B$ near infinity if positive constants $c$ and $t_0$ exist such that
\begin{equation*}
    B(t)\leq A(c t) \quad \  \text{for} \ t\geq t_0.
\end{equation*}
The functions $A$ and $B$ are called equivalent near infinity if they dominate each other near infinity. The equivalence between the functions $A$ and $B$ will be denoted by
$$ A \simeq B.$$
Given  a~measurable set $E$ of finite positive measure and a Young function $A$, the \emph{Orlicz space} $L^A (E)$ is the rearrangement-invariant space associated with the \emph{Luxemburg function norm} defined  as
\begin{equation*}
    \|f\|_{L^A(0,1)}= \inf \left\{ \lambda >0 \colon  \int_0\sp 1A \left(\frac{f(r)}{\lambda} \right) \, \d r \leq 1 \right\}
\end{equation*}
for $f \in L^0_+(0,1)$. In particular, $L^A (E)= L^p
(E)$ if $A(t)= t^p$ for some $p \in [1, \infty )$, and
$L^A (E)= L^\infty (E)$ if $A(t)= \infty
\chi_{(1, \infty)}(t)$ for $t \in  [0, \infty )$.
One has that
\begin{equation}\label{B.6}
    L^A(E)\to L^B(E) \qquad \text{if and only if} \qquad  A \ \text{dominates} \ B \  \text{near infinity}.
\end{equation} Thus
    $L^A(E)= L^B(E)$ up to equivalent norms  if and only if $A$ is equivalent to $B$ near infinity.
    Also,
    \begin{equation}\label{B.7}
 \left(L^A\right)'(E)   = L^{\widetilde A}(E)
\end{equation} up to equivalent norms.

We denote by $L^p\left(\log L\right)^{\alpha}(E)$ the Orlicz space
associated with a Young function equivalent to $t^p (\log
t)^\alpha$ near infinity, where either $p>1$ and $\alpha \in \mathbb R$, or
$p=1$ and $ \alpha \geq 0$. The space $L^p\left(\log L\right)^{\alpha}(E)$ is also called {\it Zygmund space} in the literature.

\subsection{Spaces of Sobolev type}\label{sec2.3}

Let  $\Omega$ be an
open  set in $\mathbb R^n$, $n \geq 2$, with $|\Omega|<\infty$ and
let $\|\cdot\|_{X(0,1)}$  be  an  r.i.~function norm. For each $m
\in \mathbb N$ , the $m$-th order Sobolev type space $W^mX(\Omega)$
is defined as
\begin{equation}\nonumber
\begin{split}
W^mX(\Omega) = \{ u  \colon   u\    \text{is}
\  \text{$m$-times}  \,  \text{weakly differentiable in}\ 
 \Omega,   \text{and} \   |\nabla^ku| \in X(\Omega) \
  \text{for}  \  k \in \{0, \dots,m\}
\}
\end{split}
\end{equation}
endowed with the norm
$$\| u  \|_{W^m
X(\Omega)} = \sum_{k=0}^m\| \nabla^k u
\|_{X(\Omega)}.$$
Here, 
$|\nabla^k u|$ denotes the Euclidean norm
of $\nabla^k u$ and $\| \nabla^k u
\|_{X(\Omega)}$ is an abridged notation for $\| \,| \nabla^k u
|\, \|_{X(\Omega)}$.

We define the subspace~$W^m_0X(\Omega)$  of~$W^mX(\Omega)$  as
\begin{equation}\nonumber
\begin{aligned}
    W^m_0X(\Omega) = \{ u\in  W^mX(\Omega) \colon   &\text{the continuation of $u$ by $0$ outside $\Omega$} \\&\text{is $m$-times weakly differentiable in $\mathbb R^n$}
\}.
\end{aligned}
\end{equation}
The spaces $W^mX(\Omega)$ and  $W^m_0X(\Omega)$ are Banach spaces. Since $|\Omega|< \infty$,  thanks to a general form of the Poincar\'e inequality for rearrangement-invariant spaces~\cite[Lemma~4.2]{CPbmo}, the space $W^m_0X(\Omega)$ can also be equivalently normed by the functional $ \| \nabla^m u
\|_{X(\Omega)}$ for $u \in W^m_0X(\Omega)$. Thus,
\begin{equation}\label{Equ-Sob-norms}
\| u  \|_{W^m
X(\Omega)} \ \approx \  \| \nabla^m u  \|_{
X(\Omega)}
\end{equation}
for $u \in W^m_0X(\Omega)$, with equivalence constants depending on $X(\Omega)$ and $m$.

Sharp Sobolev embeddings involving r.i.~spaces have a role in our approach. The optimal target r.i.~function  norm in Sobolev type embeddings
for $W^mX(\Omega)$ can be characterized as follows. For each $m \in \mathbb N$, we denote by $\| \cdot \|_{_{X_{m}(0,1)}}$   the rearrangement-invariant function
norm whose associate norm obeys
\begin{equation}\label{E:eucl_opt_norm-john}
    \|f\|_{{X_{m}'(0,1)}}
      =\left\|r\sp{\frac mn}  f\sp{**}(r)\right\|_{{X'(0,1)}}
\end{equation}
for  all $f \in L^0_+ (0,1)$. By~\cite[Theorem~A]{KP1} (see also~\cite[Theorem~6.2]{CPS}), if $\Omega$ is bounded John domain, then
\begin{equation}\label{KP3}
    W^{m}X(\Omega)\  \to \ X_{m}(\Omega),
\end{equation}
and the norm of the embedding depends only on $n, m$ and $\Omega$.
Moreover,  $X_{m}(\Omega)$ is optimal in~\eqref{KP3} among all r.i.~spaces, in the sense that, if~\eqref{KP3} holds with $
X_{m}(\Omega)$ replaced with some other r.i.~space $Y(\Omega)$, then
$$X_{m}(\Omega)\  \to \  Y (\Omega) ,$$ or, equivalently,
\begin{equation}\label{ott}
    X_{m}(0,1)\  \to \  Y (0,1) .
\end{equation}

\begin{lemma}\label{L:1}
    Let $n\in\mathbb N$, $n\ge2$, and $k \in \mathbb N$. Then, there exists a positive constant $C$ depending only on $n$ and $k$ such that, for every 
    $u \in W^{k,1}_{\loc}(\mathbb R^n)$ and every open ball $B\subset\mathbb R^n$, one has
    \begin{equation}\label{E:L1-1}
        \left\|u  - {P}^{k-1}_{B}[u] \right\|_{L^{\frac{n}{n-k}}(B)}
        \leq C\left\|\nabla^{k}u\right\|_{L^1(B)} \quad \  \text{for} \ k\in\{1,\dots, n-1\} 
    \end{equation}
    and
    \begin{equation}\label{E:L1-2}
        \left\|u  - {P}^{k-1}_{B}[u] \right\|_{L^{\infty}(B)}
        \leq C \, |B|^{-1 +\frac{k}{n}}\left\|\nabla^{k}u\right\|_{L^1(B)} \quad \ \text{for} \  k\geq n.
    \end{equation}
\end{lemma}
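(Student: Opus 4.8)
The plan is to prove this as a direct consequence of the classical Sobolev–Poincaré inequality on balls combined with the Bramble–Hilbert type estimate that controls the error of polynomial approximation using the averaged Taylor polynomial. First I would recall the standard fact that, for $v \in W^{k,1}(B)$, if $Q^{k-1}_B[v]$ denotes the averaged Taylor polynomial of degree $k-1$ associated to $B$ (obtained by averaging the Taylor expansion over $B$), then one has the Bramble–Hilbert estimate $\|v - Q^{k-1}_B[v]\|_{W^{j,1}(B)} \lesssim |B|^{\frac{k-j}{n}}\|\nabla^k v\|_{L^1(B)}$ for $0 \le j \le k$, with constant depending only on $n$ and $k$ (here one uses that the John constant of a ball is universal, so the constant does not depend on $B$). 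In particular, for $j = 1, \dots, k$ this gives control of all intermediate derivatives of $v - Q^{k-1}_B[v]$ in $L^1(B)$ by the right-hand side.

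The second ingredient is the scale-invariant Sobolev (Gagliardo–Nirenberg–Sobolev) inequality on a ball: iterating the first-order estimate
$$\|w - w_B\|_{L^{\frac{n}{n-1}}(B)} \lesssim \|\nabla w\|_{L^1(B)},$$
with constant independent of $B$, one obtains for $w \in W^{k,1}(B)$ and $k \le n-1$ the embedding-type bound
$$\inf_{P \in \mathcal P_{k-1}} \|w - P\|_{L^{\frac{n}{n-k}}(B)} \lesssim \sum_{j=1}^{k}|B|^{\frac{j-1}{n}}\|\nabla^j w\|_{L^1(B)};$$
for $k \ge n$ the target exponent saturates and one instead reaches $L^\infty$, with an extra factor $|B|^{-1+\frac{k}{n}}$ coming from Hölder's inequality on the top scale, yielding
$$\inf_{P \in \mathcal P_{k-1}} \|w - P\|_{L^{\infty}(B)} \lesssim |B|^{-1+\frac{k}{n}}\sum_{j=1}^{k}|B|^{\frac{j-1}{n}}\|\nabla^j w\|_{L^1(B)}.$$
Applying this to $w = u - Q^{k-1}_B[u]$ (whose derivatives of order $j \ge 1$ are $\nabla^j u - \nabla^j Q^{k-1}_B[u]$, already estimated in the first step, and whose $k$-th order derivatives coincide with $\nabla^k u$), all the intermediate terms collapse to $\|\nabla^k u\|_{L^1(B)}$, and since $Q^{k-1}_B[u] \in \mathcal P_{k-1}$ the infimum over polynomials only improves the bound, giving \eqref{E:L1-1} and \eqref{E:L1-2} with $Q^{k-1}_B[u]$ in place of $P^{k-1}_B[u]$.

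Finally, to replace $Q^{k-1}_B[u]$ by the specific polynomial $P^{k-1}_B[u]$ characterized by the moment conditions \eqref{E:moments}, I would observe that $P^{k-1}_B[u] - Q^{k-1}_B[u]$ is a polynomial of degree at most $k-1$ all of whose averaged derivatives up to order $k-1$ over $B$ are controlled: indeed $\int_B \nabla^h(u - P^{k-1}_B[u])\,\d x = 0$ for $h \le k-1$, while $\int_B \nabla^h(u - Q^{k-1}_B[u])\,\d x$ is bounded in absolute value by $\|u - Q^{k-1}_B[u]\|_{W^{h,1}(B)} \lesssim |B|^{\frac{k-h}{n}}\|\nabla^k u\|_{L^1(B)}$ from the first step. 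Since on the finite-dimensional space $\mathcal P_{k-1}$, restricted to a ball, the map $P \mapsto \big(\int_B \nabla^h P\,\d x\big)_{h=0}^{k-1}$ is a linear isomorphism onto its image, a scaling argument (reduce to the unit ball, then rescale, tracking the homogeneity) shows $\|P^{k-1}_B[u] - Q^{k-1}_B[u]\|_{L^q(B)} \lesssim |B|^{\frac1q}\,\|\nabla^k u\|_{L^1(B)}\cdot(\text{appropriate power of }|B|)$, which is dominated by the right-hand sides already obtained; the triangle inequality then finishes the proof. The main obstacle is bookkeeping the powers of $|B|$ correctly through the rescaling — making sure the homogeneity of each term matches on both sides — and confirming that every constant genuinely depends only on $n$ and $k$ and not on $B$, which rests on the universality of the John/chain constants of a ball.
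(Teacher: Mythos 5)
Your proposal is correct in strategy but takes a genuinely different and longer route than the paper. The paper's proof is very short: scale to the unit ball, observe that $v := u - P^{k-1}_B[u]$ satisfies $\int_B \nabla^h v\,\d x = 0$ for $h=0,\dots,k-1$ \emph{directly} from the defining property \eqref{E:moments}, so that both estimates follow at once from the classical Sobolev--Poincar\'e inequality (using $\nabla^k v=\nabla^k u$); the general ball is handled by the substitution $u_r(x)=u(rx)$ after checking $P^{k-1}_{B_r}[u](x) = P^{k-1}_{B}[u_r](r^{-1}x)$. You instead detour through the averaged Taylor polynomial $Q^{k-1}_B[u]$ and the Bramble--Hilbert lemma, and then transfer the estimate from $Q^{k-1}_B[u]$ to $P^{k-1}_B[u]$ by equivalence of norms on $\mathcal P_{k-1}$ plus control of the averaged derivatives of the difference. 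This extra comparison step is unnecessary once one notices that $P^{k-1}_B[u]$ already produces vanishing moments; still, your transfer argument is correct, and the overall plan works.

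Two cautions on the details, both related to the ``bookkeeping'' you flag at the end. First, the powers of $|B|$ in your iterated Sobolev inequalities are off: a scale-invariant version must carry $|B|^{(j-k)/n}$, not $|B|^{(j-1)/n}$, in front of $\|\nabla^j w\|_{L^1(B)}$ (the left side scales like $r^{n-k}$, while each term in your sum scales like $r^{n-1}$). This is precisely the factor that makes the Bramble--Hilbert bounds collapse to $\|\nabla^k u\|_{L^1(B)}$; with your exponents they would instead collapse to $|B|^{(k-1)/n}\|\nabla^k u\|_{L^1(B)}$. Second, an inequality with $\inf_{P\in\mathcal P_{k-1}}$ on the left, applied to $w=u-Q^{k-1}_B[u]$, bounds only the infimum and does not by itself control $\|w\|_{L^{n/(n-k)}(B)}$; the clean way to run your argument is to combine the plain Sobolev embedding $W^{k,1}(B_0)\hookrightarrow L^{n/(n-k)}(B_0)$ with Bramble--Hilbert on a fixed ball and then rescale, which is in effect what your computation is doing.
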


\begin{proof}
    We may assume, without loss of generality, that balls are centered at $0$. First, let $B$ be the ball,  from this family, such that $|B|= 1$. Then both~\eqref{E:L1-1} and~\eqref{E:L1-2} follow from classical Sobolev embeddings, since $\nabla^{k}u=\nabla^{k}(u-{P}^{k-1}_{B}[u])$ on account of~\eqref{E:moments}.

    Now fix $r\in(0,\infty)$ and consider the ball $B_{r}$, with radius $r$. Given any $u \in W^{k,1}(B_{r})$, we define $u_r=u(r x)$, $x \in B$. Then  $u_r \in W^{k,1}(B)$ and the coefficients of ${P}^{k-1}_{B}[u_r]$ are linear combinations of the components of $\int_B \nabla^h u_r\,\d x$, for $h\in \{0, ..., k-1\}$ with coefficients depending on $n$, $h$ and $B$. Owing to~\eqref{E:moments}, we have ${P}^{k-1}_{B_{r}}[u] (x)  = {P}^{k-1}_{B}[u_r] (r^{-1}x)$ for $x \in \mathbb R^n$. Therefore,  conclusions follow  from the previous case by a scaling argument.
\end{proof}

\section{Technical lemmas}

In this section we collect some basic lemmas concerning one-variable functions, to be used in the proofs of our main results.
 
\begin{lemma}\label{L:standard}
    Let $\theta, \eta \colon(0,\infty)\to(0,\infty)$. Assume that $\theta$ satisfies condition \eqref{E:admissible-new}   with $\varphi$ replaced by $\theta$ and  $\eta$ is non-increasing on $(0,1)$. Fix any $r_0 \in (0,1)$. Then,
\begin{equation}\label{E:standard-assertion}
\sup_{r\in(0,1)}\frac{\eta(r)}{\theta(r)}  \  \approx   \sup_{r\in \left(0,r_0\right)}\frac{\eta(r)}{\theta(r)}
    \end{equation}
   up to constants   depending only on $\theta$ and $r_0$.
\end{lemma}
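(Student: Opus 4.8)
The claim is a two-sided estimate, so the plan is to establish the two inequalities separately. One direction is immediate: since $(0,r_0)\subset(0,1)$, the supremum over the larger set dominates the one over the smaller, hence $\sup_{r\in(0,r_0)}\frac{\eta(r)}{\theta(r)}\le\sup_{r\in(0,1)}\frac{\eta(r)}{\theta(r)}$ with constant $1$. The entire content of the lemma is therefore the reverse inequality, i.e. that the ``tail'' contribution from $r\in[r_0,1)$ is controlled by the contribution from $(0,r_0)$, up to a multiplicative constant depending only on $\theta$ and $r_0$.

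For the reverse inequality, I would split $\sup_{r\in(0,1)}=\max\{\sup_{r\in(0,r_0)},\sup_{r\in[r_0,1)}\}$ and bound the second term. On the interval $[r_0,1)$ the denominator is controlled from below using admissibility of $\theta$: condition \eqref{E:admissible-new} (with $\varphi$ replaced by $\theta$) applied with $a=r_0$ gives $\inf_{r\in[r_0,\infty)}\theta(r)=:c_{\theta,r_0}>0$, so $\frac{1}{\theta(r)}\le\frac{1}{c_{\theta,r_0}}$ for $r\ge r_0$. For the numerator, since $\eta$ is non-increasing on $(0,1)$, for every $r\in[r_0,1)$ we have $\eta(r)\le\eta(r_0/2)$ (say), and $r_0/2\in(0,r_0)$. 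Combining, for $r\in[r_0,1)$,
\begin{equation*}
\frac{\eta(r)}{\theta(r)}\le\frac{\eta(r_0/2)}{c_{\theta,r_0}}=\frac{\theta(r_0/2)}{c_{\theta,r_0}}\cdot\frac{\eta(r_0/2)}{\theta(r_0/2)}\le\frac{\theta(r_0/2)}{c_{\theta,r_0}}\sup_{s\in(0,r_0)}\frac{\eta(s)}{\theta(s)}.
\end{equation*}
Taking the supremum over $r\in[r_0,1)$ and noting that $\theta(r_0/2)<\infty$ and $c_{\theta,r_0}>0$ depend only on $\theta$ and $r_0$, this yields $\sup_{r\in[r_0,1)}\frac{\eta(r)}{\theta(r)}\lesssim\sup_{r\in(0,r_0)}\frac{\eta(r)}{\theta(r)}$, hence $\sup_{r\in(0,1)}\frac{\eta(r)}{\theta(r)}\lesssim\sup_{r\in(0,r_0)}\frac{\eta(r)}{\theta(r)}$ with the advertised constant dependence.

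There is essentially no hard step here; the only mild subtlety is a degenerate case to address for cleanliness. If $\sup_{r\in(0,r_0)}\frac{\eta(r)}{\theta(r)}=0$ then $\eta\equiv0$ on $(0,r_0)$, and since $\eta$ is non-increasing on $(0,1)$ and positive-valued by hypothesis this cannot happen, so the right-hand side is always strictly positive and the displayed manipulation is legitimate; if it is $+\infty$ the inequality is trivial. One should also remark that the choice $r_0/2$ is arbitrary: any fixed point of $(0,r_0)$ works, and one could equally use $\eta(r)\le\sup_{s\in(r_0/2,r_0)}\eta(s)$. I expect the proof to be three or four lines once these observations are organized; the ``main obstacle'', such as it is, is merely bookkeeping the constant so that it manifestly depends only on $\theta$ and $r_0$, which the argument above does.
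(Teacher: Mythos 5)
Your argument is correct and follows the same route as the paper's proof: split the supremum at $r_0$, use admissibility of $\theta$ (the paper writes $K=\sup_{r\in[r_0,1)}\theta(r)^{-1}<\infty$, equivalent to your $1/c_{\theta,r_0}$) to bound $1/\theta$ on the tail, and compare $\eta(r)$ for $r\ge r_0$ with $\eta(r_0/2)$ by monotonicity, reinserting $\theta(r_0/2)$ to produce the ratio. The resulting constant $\max\{1,\,\theta(r_0/2)/c_{\theta,r_0}\}$ matches the paper's $\max\{1,\,K\theta(r_0/2)\}$ up to notation.
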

\begin{proof} The right-hand side of the equivalence \eqref{E:standard-assertion} is trivially bounded by its left-hand side.
\\ To verify the reverse bound,
    denote
    \begin{equation*}
        K = \sup_{r\in\left[r_0,1\right)}\frac{1}{\theta(r)}.
    \end{equation*}
    Since $\theta$ is admissible, we have $K<\infty$. Thus, using also the monotonicity of $\eta$,
    \begin{align*}
        \sup_{r\in(0,1)}\frac{\eta(r)}{\theta(r)}= \max\left\{\sup_{r\in \left(0, r_0\right)}\frac{\eta(r)}{\theta(r)},\sup_{r\in\left[  r_0,1\right)}\frac{\eta(r)}{\theta(r)}\right\}
       \le \max\left\{1, \, K \theta\left(\frac{r_0}{2}\right)\right\}  \sup_{r\in\left(0,r_0\right)}\frac{\eta(r)}{\theta(r)}.
    \end{align*} 
\end{proof}

\begin{lemma}\label{L:double-integral}
  Let  $\alpha\in(0,1)$.  Then,
    \begin{equation}\label{E:double-integral}
       \int_{r}^{1}s^{-1+\alpha}f^{**}(s)\, \d s \ \approx \ r^{-1+\alpha}\int_{0}^{r}f^{*}(s)\, \d s + \int_{r}^{1-r}s^{-1+\alpha}f^{*}(s)\, \d s
    \end{equation}
for every $r\in (0,\tfrac14)$ and $f\in L^0(0,1)$,   with    equivalence constants depending only on $\alpha$.
\end{lemma}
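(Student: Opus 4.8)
\textbf{Proof plan for Lemma~\ref{L:double-integral}.}

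The plan is to establish the two-sided estimate by splitting the integral on the left-hand side of \eqref{E:double-integral} according to the two natural regimes $(r,2r)$ and $(2r,1)$, and to exploit the elementary identity $\frac{\d}{\d s}\big(s f^{**}(s)\big)=f^*(s)$ together with monotonicity of $f^*$ and $f^{**}$. First I would write
\begin{equation*}
\int_r^1 s^{-1+\alpha} f^{**}(s)\,\d s = \int_r^{2r} s^{-1+\alpha} f^{**}(s)\,\d s + \int_{2r}^1 s^{-1+\alpha} f^{**}(s)\,\d s,
\end{equation*}
and note that on $(r,2r)$ one has $s^{-1+\alpha}\approx r^{-1+\alpha}$ (with constants depending only on $\alpha$), while $f^{**}$ is non-increasing, so the first term is $\approx r^{-1+\alpha}\cdot r\, f^{**}(r) = r^{-1+\alpha}\int_0^r f^*(s)\,\d s$, which is exactly the first term on the right-hand side.

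For the second integral, the idea is to replace $f^{**}$ by $f^*$ using integration by parts. Using $f^{**}(s)=\frac1s\int_0^s f^*$, one has by Fubini
\begin{equation*}
\int_{2r}^1 s^{-1+\alpha} f^{**}(s)\,\d s = \int_{2r}^1 s^{-2+\alpha}\Big(\int_0^s f^*(t)\,\d t\Big)\,\d s = \int_0^1 f^*(t)\Big(\int_{\max\{t,2r\}}^1 s^{-2+\alpha}\,\d s\Big)\,\d t.
\end{equation*}
Since $\alpha\in(0,1)$, the inner integral $\int_\rho^1 s^{-2+\alpha}\,\d s$ behaves like $\rho^{-1+\alpha}$ for small $\rho$ (up to constants depending on $\alpha$; the $s^{-2+\alpha}$ singularity dominates). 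Splitting the $t$-integral at $2r$: on $(0,2r)$ the inner integral is $\approx (2r)^{-1+\alpha}\approx r^{-1+\alpha}$, contributing $\approx r^{-1+\alpha}\int_0^{2r} f^* \approx r^{-1+\alpha}\int_0^r f^*$ (again using monotonicity of $f^*$ so that $\int_0^{2r}f^*\approx\int_0^r f^*$, which absorbs into the first right-hand term); on $(2r,1)$ it is $\approx t^{-1+\alpha}$, contributing $\approx\int_{2r}^1 t^{-1+\alpha}f^*(t)\,\d t$. Finally, since $f^*$ is non-increasing one has $\int_{1-r}^1 s^{-1+\alpha}f^*(s)\,\d s \lesssim r\cdot f^*(1-r) \lesssim \frac1{1-r}\int_0^{1-r}f^* \lesssim \int_0^r f^* $ is not quite right; rather one compares $\int_{r}^{1} s^{-1+\alpha}f^*$ with $\int_r^{1-r} s^{-1+\alpha}f^*$ by noting the discarded piece $\int_{1-r}^1 s^{-1+\alpha} f^*(s)\,\d s \le r^{\,}\,(1-r)^{-1+\alpha} f^*(1-r)$ and $f^*(1-r)\le f^{**}(1-r)\lesssim \int_r^{2r} s^{-1+\alpha}f^{**}$ up to constants; so this tail is controlled by terms already accounted for, and conversely $\int_{2r}^1 \le \int_r^1$ trivially gives the remaining direction. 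Likewise, for the lower bound one simply uses that each of the two right-hand terms is, up to constants, one of the pieces in the decomposition above, so their sum is $\lesssim \int_r^1 s^{-1+\alpha}f^{**}$.

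The main obstacle I anticipate is bookkeeping the endpoint regions carefully: matching the upper limit $1-r$ on the right-hand side against the upper limit $1$ on the left, and making sure the constants stay uniform in $r\in(0,\tfrac14)$ (this is why $r$ is bounded away from $\tfrac12$, so that $1-r$ stays bounded below and the truncation is harmless) and depend only on $\alpha$. The estimate $\int_\rho^1 s^{-2+\alpha}\,\d s\approx\rho^{-1+\alpha}$ needs the constraint $\alpha<1$ to guarantee the lower endpoint dominates, and $\alpha>0$ plays no role in convergence here but is part of the stated hypothesis; all constants produced are explicit functions of $\alpha$ alone, as required.
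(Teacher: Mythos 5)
Your overall plan is essentially the paper's: use the identity $f^{**}(s)=s^{-1}\int_0^s f^*$ to convert the left-hand integral into an expression in $f^*$ (your Fubini swap is exactly what the paper does via integration by parts, producing $\frac{1}{1-\alpha}\big((r^{-1+\alpha}-1)\int_0^r f^* + \int_r^1(s^{-1+\alpha}-1)f^*\big)$), and then compare pieces using monotonicity of $f^*$. The preliminary split at $2r$ is a harmless detour, and the estimate on $(r,2r)$ and the $(0,2r)$ part of the Fubini integral are correct.

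The weak spot is the endpoint near $s=1$, and you seem to sense it yourself (``is not quite right; rather\ldots''). Your replacement chain contains the step $f^{**}(1-r)\lesssim\int_r^{2r}s^{-1+\alpha}f^{**}(s)\,\d s$, which is false as stated: since $\int_r^{2r}s^{-1+\alpha}\,\d s = \frac{(2^\alpha-1)}{\alpha}r^\alpha$ and $f^{**}$ is non-increasing, one only gets $f^{**}(1-r)\lesssim r^{-\alpha}\int_r^{2r}s^{-1+\alpha}f^{**}$. Your bound survives because the preceding factor $r$ satisfies $r\cdot r^{-\alpha}=r^{1-\alpha}\le1$, but as written the intermediate inequality is wrong and the argument is circuitous: you bound a piece of the right-hand side by a piece of the left-hand side, then swing it back via the $(r,2r)\approx r^{-1+\alpha}\int_0^r f^*$ equivalence. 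The paper handles the tail in one line: since $s\mapsto s^{-1+\alpha}f^*(s)$ is non-increasing and $(1-r,1)$ has length $r\le 1-2r$, one has $\int_{1-r}^1 s^{-1+\alpha}f^*(s)\,\d s\le\int_{1-2r}^{1-r}s^{-1+\alpha}f^*(s)\,\d s\le\int_r^{1-r}s^{-1+\alpha}f^*(s)\,\d s$, which absorbs the discarded piece directly into the right-hand side without touching $f^{**}$ or the $(r,2r)$ block. Similarly, for the reverse direction the paper just uses that the interval $(\frac12,1-r)$ has the same length as $(r,\frac12)$ and the integrand is decreasing, so the contribution from $(\frac12,1-r)$ is controlled by that from $(r,\frac12)$. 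So: same underlying idea, but you should replace the $f^{**}(1-r)$ detour with the direct interval-shift comparison, both for correctness of the intermediate step and for clarity.
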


\begin{proof} Fix $r\in (0,\tfrac14)$ and $f\in L^0(0,1)$. 
Then an integration by parts yields
    \begin{equation}\label{L0}
        \int_{r}^{1} s^{-1+\alpha}f^{**}(s)\, \d s =\frac{1}{1-\alpha}\left((r^{-1+\alpha}-1)\int_{0}^{r}f^{*}(s)\, \d s + \int_{r}^{1}(s^{-1+\alpha}-1)f^{*}(s)\, \d s\right).
    \end{equation}
Clearly,
    \begin{equation*}
        \int_{r}^{1} s^{-1+\alpha}f^{**}(s)\, \d s \le \frac{1}{1-\alpha}\left(r^{-1+\alpha}\int_{0}^{r}f^{*}(s)\, \d s + \int_{r}^{1}s^{-1+\alpha}f^{*}(s)\, \d s\right).
    \end{equation*}
   Moreover
    \begin{equation*}
        \int_{r}^{1-r}s^{-1+\alpha}f^{*}(s)\, \d s \ge \int_{1-r}^{1}s^{-1+\alpha}f^{*}(s)\, \d s,
    \end{equation*}
    since the integrand is decreasing and the length of the interval $(1-r,1)$ does not exceed that of $(r,1-r)$.
    Hence
 \begin{equation}\label{L1}
        \int_{r}^{1}s^{-1+\alpha}f^{**}(s)\, \d s \le \frac{2}{1-\alpha}\left(r^{-1+\alpha}\int_{0}^{r}f^{*}(s)\, \d s + \int_{r}^{1-r}s^{-1+\alpha}f^{*}(s)\, \d s\right).
    \end{equation}    
    On the other hand, for $r<\tfrac14$
    \begin{equation*}
    (r^{-1+\alpha}-1)\int_{0}^{r}f^{*}(s)\, \d s \ge (1-4^{-1+\alpha}) r^{-1+\alpha}\int_{0}^{r}f^{*}(s)\, \d s 
    \end{equation*}
       and 
    \begin{equation*}
        \int_{r}^{1-r}(s^{-1+\alpha}-1)f^{*}(s)\, \d s \ge \int_{r}^{\frac 12}(s^{-1+\alpha}-1)f^{*}(s)\, \d s \ge (1-2^{-1+\alpha})\int_{r}^{\frac 12}s^{-1+\alpha}f^{*}(s)\, \d s,
    \end{equation*}  since $1-r>\frac12$. 
    Moreover
        \begin{equation*}
        \int_{r}^{1-r} s^{-1+\alpha} f^{*}(s)\, \d s \le 2\int_{r}^{\frac 12}s^{-1+\alpha}f^{*}(s)\, \d s, 
    \end{equation*} 
    since the intervals $(r,\tfrac 12)$ and $(\tfrac12,1-r)$ are of equal length, and the integrand is decreasing.\\
        Consequently, by~\eqref{L0},
   \begin{equation}\label{L2}
        \int_{r}^{1}s^{-1+\alpha}f^{**}(s)\, \d s \ge \frac{1-2^{-1+\alpha}}{2(1-\alpha)}
        \left(r^{-1+\alpha}\int_{0}^{r}f^{*}(s)\, \d s + \int_{r}^{1-r}s^{-1+\alpha}f^{*}(s)\, \d s\right).
    \end{equation}
  Coupling  inequalities~\eqref{L1} and~\eqref{L2}  provides~\eqref{E:double-integral}.
\end{proof}

\begin{lemma}\label{L:interval-norm}
  Let  $\|\cdot\|_{X(0,1)}$ be a rearrangement-invariant function norm and 
let  $\alpha\in(0,\infty)$. Then,
    \begin{equation}\label{E:interval-norm}
       \|s^{\alpha}\chi_{(0,r)}^{**}(s)\|_{X(0,1)} \ \approx \ r  \, \|s^{-1+\alpha}\chi_{(r,1)}(s)\|_{X(0,1)}
    \end{equation}
    for every $r\in  (0,\tfrac{1}{2})$, up to a constant depending only on $\alpha$.
\end{lemma}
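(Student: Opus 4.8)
The plan is to compute $\chi_{(0,r)}^{**}$ explicitly and then split the associated integral at the natural break-point $s=r$. First I would observe that for $0<s\le r$ one has $\chi_{(0,r)}^{*}(s)=1$, while for $s>r$ one has $\chi_{(0,r)}^{*}(s)=0$; integrating gives
\begin{equation}\label{E:ln-proof-1}
    \chi_{(0,r)}^{**}(s)=
    \begin{cases}
        1 &\quad\text{if }0<s\le r,\\[0.5ex]
        \dfrac{r}{s} &\quad\text{if }r<s<1.
    \end{cases}
\end{equation}
Hence the left-hand side of~\eqref{E:interval-norm} equals $\big\|\,s^{\alpha}\chi_{(0,r)}(s)+r\,s^{-1+\alpha}\chi_{(r,1)}(s)\,\big\|_{X(0,1)}$. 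Using subadditivity and positive homogeneity of the function norm, together with the elementary monotonicity property~\eqref{N2}, I would bound this from above by $\|s^{\alpha}\chi_{(0,r)}(s)\|_{X(0,1)}+r\|s^{-1+\alpha}\chi_{(r,1)}(s)\|_{X(0,1)}$ and from below by $\max$ of these two terms; the displayed equivalence then reduces to showing that the first term is controlled by the second, namely
\begin{equation}\label{E:ln-proof-2}
    \|s^{\alpha}\chi_{(0,r)}(s)\|_{X(0,1)}\ \lesssim\ r\,\|s^{-1+\alpha}\chi_{(r,1)}(s)\|_{X(0,1)},
\end{equation}
with a constant depending only on $\alpha$.

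To prove~\eqref{E:ln-proof-2} I would exploit the dilation operator $\mathcal E_{\lambda}$ from~\eqref{E:dilation}. The function $s^{\alpha}\chi_{(0,r)}(s)$ is, up to the factor $r^{\alpha}$, exactly the rescaling $\mathcal E_{r}\big(t^{\alpha}\chi_{(0,1)}(t)\big)$ evaluated after the substitution $s=rt$; more precisely $s^{\alpha}\chi_{(0,r)}(s)=r^{\alpha}\,(\mathcal E_{r}g)(s)$ where $g(t)=t^{\alpha}$ on $(0,1)$. Since $r<\tfrac12$, the boundedness of $\mathcal E_{r}$ on $X(0,1)$ with norm at most $\max\{1,r^{-1}\}=r^{-1}$ gives
\begin{equation}\label{E:ln-proof-3}
    \|s^{\alpha}\chi_{(0,r)}(s)\|_{X(0,1)}\le r^{\alpha-1}\,\|t^{\alpha}\|_{X(0,1)}.
\end{equation}
On the other hand, for the right-hand side I would bound from below by restricting to the sub-interval $(r,2r)$: on $(r,2r)$ we have $s^{-1+\alpha}\ge (2r)^{-1+\alpha}=2^{-1+\alpha}r^{-1+\alpha}$, so by~\eqref{N2}, $r\|s^{-1+\alpha}\chi_{(r,1)}\|_{X(0,1)}\ge 2^{-1+\alpha}r^{\alpha}\|\chi_{(r,2r)}\|_{X(0,1)}$. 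Now $\chi_{(r,2r)}$ is a translate of $\chi_{(0,r)}$, and by rearrangement-invariance~\eqref{N6} its norm equals $\|\chi_{(0,r)}\|_{X(0,1)}$, which in turn equals $\|\mathcal E_{r}\chi_{(0,1)}\|_{X(0,1)}$; but also $\|\chi_{(0,r)}\|_{X(0,1)}\ge$ (up to a uniform constant) $\|t^{\alpha}\|_{X(0,1)}$ fails in general, so instead I would compare the two sides directly through the fundamental function.

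A cleaner route for~\eqref{E:ln-proof-2}: write $s^{\alpha}\chi_{(0,r)}(s)\le r^{\alpha}\chi_{(0,r)}(s)$ pointwise (since $\alpha>0$ and $s<r<1$), so by~\eqref{N2} the left side is at most $r^{\alpha}\|\chi_{(0,r)}\|_{X(0,1)}$. For the right side, restricting to $(r,2r)\subset(r,1)$ (legitimate because $r<\tfrac12$) and using $s^{-1+\alpha}\ge(2r)^{-1+\alpha}$ there, we get $r\,\|s^{-1+\alpha}\chi_{(r,1)}\|_{X(0,1)}\ge 2^{-1+\alpha}r^{\alpha}\|\chi_{(r,2r)}\|_{X(0,1)}=2^{-1+\alpha}r^{\alpha}\|\chi_{(0,r)}\|_{X(0,1)}$, the last equality by~\eqref{N6}. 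Combining the two displays yields~\eqref{E:ln-proof-2} with constant $2^{1-\alpha}$, and assembling everything proves~\eqref{E:interval-norm}. The main obstacle is this last comparison: the point is to avoid any appeal to the unknown behaviour of $\|\cdot\|_{X(0,1)}$ on power functions and instead reduce both sides to the fundamental function $\|\chi_{(0,r)}\|_{X(0,1)}$, which is where the hypothesis $r<\tfrac12$ (guaranteeing $(r,2r)\subset(0,1)$) is used; the dilation-operator argument is an alternative but requires slightly more care with the endpoint, so I would favour the pointwise-domination argument just sketched.
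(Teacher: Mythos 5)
Your ``cleaner route'' is essentially the paper's proof: compute $\chi_{(0,r)}^{**}$ explicitly, split the norm, and reduce the hard direction to comparing $\|s^{\alpha}\chi_{(0,r)}(s)\|_{X(0,1)}$ with $r\|s^{-1+\alpha}\chi_{(r,1)}(s)\|_{X(0,1)}$ through the identity $\|\chi_{(0,r)}\|_{X(0,1)}=\|\chi_{(r,2r)}\|_{X(0,1)}$ and the hypothesis $r<\tfrac12$. One small slip: the bound $s^{-1+\alpha}\ge(2r)^{-1+\alpha}$ on $(r,2r)$ holds only for $\alpha\le 1$; when $\alpha>1$ the function $s\mapsto s^{-1+\alpha}$ is increasing, so the correct minimum on $(r,2r)$ is $r^{-1+\alpha}$, and the constant should be $\min\{2^{-1+\alpha},1\}$ rather than $2^{-1+\alpha}$ (equivalently, the paper's $\max\{2^{1-\alpha},1\}$ after rearranging). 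With that corrected the argument is sound. The discarded first attempt via the dilation operator is indeed not needed and you were right to abandon it, since comparing to the fixed quantity $\|t^{\alpha}\|_{X(0,1)}$ gives the wrong scaling.
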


\begin{proof} Fix $r\in  (0,\tfrac{1}{2})$. Then,
    \begin{equation*}
      s^{\alpha}\chi_{(0,r)}^{**}(s) = s^{\alpha}\chi_{(0,r)}(s)+rs^{-1+\alpha}\chi_{(r,1)}(s) \quad \ \text{for} \ s\in(0,1).
    \end{equation*}
   The bound of the right-hand side of~\eqref{E:interval-norm} by its left-hand side   is a straightforward consequence of ~\eqref{N2}. To verify the reverse bound,
     note that
    \begin{align*}
        \|s^{\alpha}\chi_{(0,r)}^{**}(s)\|_{X(0,1)} & \leq  \|s^{\alpha}\chi_{(0,r)}(s)\|_{X(0,1)}+r  \, \|s^{-1 +\alpha}\chi_{(r,1)}(s)\|_{X(0,1)}.
    \end{align*}
   Thus,
    \begin{align*}
    \|s^{\alpha}\chi_{(0,r)}(s)\|_{X(0,1)} & \leq  r^{\alpha}  \, \|\chi_{(0,r)}\|_{X(0,1)} =
    r^{\alpha}  \, \|\chi_{(r,2r)}\|_{X(0,1)}
    \leq \max\{2^{1-\alpha},1\}  \,  r  \, \| s^{-1+\alpha}\chi_{(r,2r)}(s)\|_{X(0,1)}
        \\
    &\leq  \max\{2^{1-\alpha},1\} \,   r \, \|s^{-1 +\alpha}\chi_{(r,1)}(s)\|_{X(0,1)}.
    \end{align*}
    \end{proof}

\begin{lemma}\label{L:interpolation-lemma}
    Let $\gamma\in (-\infty,1)$ and let $\|\cdot\|_{X(0,1)}$ be a rearrangement-invariant function norm. Then, the operator $T_{\gamma}$, defined as
    \begin{equation*}\label{E:operator-t-gamma}
        T_{\gamma }f(r) = r^{1-\gamma}\int_{r}^{1} s^{-2+\gamma}  f(s) \,\d s \quad \ \text{for} \  r\in(0,1) \ \mbox{and} \ f\in L^0_+(0,1), 
    \end{equation*}
    is bounded on $X(0,1)$, and
    \begin{equation}\label{E:Cald Tg}
        \|T_{\gamma}\|_{X(0,1)\to X(0,1)} \ \leq \ \tfrac{1}{1-\gamma}.
    \end{equation}
\end{lemma}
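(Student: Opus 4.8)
The plan is to prove the boundedness of $T_\gamma$ on $X(0,1)$ by a duality argument combined with the Hardy--Littlewood--P\'olya principle, reducing the matter to a pointwise estimate on the level of $**$-functions. First I would recall the classical Hardy-type inequality: for $\gamma<1$ and any nonnegative $f$, the operator $T_\gamma f(r)=r^{1-\gamma}\int_r^1 s^{-2+\gamma}f(s)\,\d s$ is bounded on $L^1(0,1)$ and on $L^\infty(0,1)$ with norm at most $\tfrac1{1-\gamma}$, as follows by a direct computation (for $L^1$, swap the order of integration and integrate $r^{1-\gamma}$ over $(0,s)$, picking up a factor $\tfrac1{2-\gamma}\le\tfrac1{1-\gamma}$; for $L^\infty$, pull $\|f\|_\infty$ out and integrate $s^{-2+\gamma}$ over $(r,1)$, again picking up at most $\tfrac1{1-\gamma}$). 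These endpoint bounds alone do not give boundedness on a general r.i.\ space, but they suggest the right route.

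The cleaner approach is to establish the pointwise rearrangement estimate
\begin{equation*}
    (T_\gamma f)^{**}(r) \ \le \ \tfrac{1}{1-\gamma}\, f^{**}(r) \qquad \text{for } r\in(0,1),
\end{equation*}
valid whenever $f$ is nonincreasing (which, after replacing $f$ by $f^*$ and using that $T_\gamma$ is order-preserving together with \eqref{N6}, is all one needs). Once this is in hand, the Hardy--Littlewood--P\'olya principle \eqref{Hardy} immediately yields $\|T_\gamma f\|_{X(0,1)}\le\tfrac1{1-\gamma}\|f\|_{X(0,1)}$, which is exactly \eqref{E:Cald Tg}. To get the pointwise estimate, note first that for nonincreasing $f$ the function $T_\gamma f$ need not be monotone, so one cannot simply identify $(T_\gamma f)^*$ with $T_\gamma f$; instead one estimates $\int_0^r (T_\gamma f)^*\,\d t$ by $\int_0^r$ of a suitable nonincreasing majorant. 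A convenient majorant is obtained by observing that $T_\gamma f(r)\le r^{1-\gamma}\int_r^1 s^{-2+\gamma}f(s)\,\d s$ and then, for $f$ nonincreasing, bounding via an integration by parts or a direct splitting of the integral.

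Concretely, I would compute $\int_0^\rho T_\gamma f(r)\,\d r$ directly by Fubini:
\begin{equation*}
    \int_0^\rho T_\gamma f(r)\,\d r = \int_0^\rho r^{1-\gamma}\!\int_r^1 s^{-2+\gamma}f(s)\,\d s\,\d r = \int_0^1 s^{-2+\gamma}f(s)\Big(\int_0^{\min\{s,\rho\}} r^{1-\gamma}\,\d r\Big)\d s,
\end{equation*}
and the inner integral equals $\tfrac{1}{2-\gamma}\min\{s,\rho\}^{2-\gamma}$. Splitting the $s$-integral at $\rho$ gives two terms; the term over $(0,\rho)$ is $\tfrac1{2-\gamma}\int_0^\rho s^{-1+\gamma}\cdot s^{1-\gamma}\cdot\text{(wait: } s^{-2+\gamma}\cdot s^{2-\gamma})f(s)\,\d s=\tfrac1{2-\gamma}\int_0^\rho f(s)\,\d s$, and the term over $(\rho,1)$ is $\tfrac{\rho^{2-\gamma}}{2-\gamma}\int_\rho^1 s^{-2+\gamma}f(s)\,\d s$. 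For $f$ nonincreasing the latter is at most $\tfrac{\rho^{2-\gamma}}{2-\gamma}f(\rho)\int_\rho^1 s^{-2+\gamma}\,\d s\le\tfrac{\rho^{2-\gamma}}{2-\gamma}f(\rho)\cdot\tfrac{\rho^{-1+\gamma}}{1-\gamma}=\tfrac{\rho\, f(\rho)}{(2-\gamma)(1-\gamma)}\le\tfrac{1}{(2-\gamma)(1-\gamma)}\int_0^\rho f(s)\,\d s$. Adding the two contributions yields $\int_0^\rho T_\gamma f\,\d r\le\big(\tfrac1{2-\gamma}+\tfrac1{(2-\gamma)(1-\gamma)}\big)\int_0^\rho f=\tfrac1{1-\gamma}\int_0^\rho f(s)\,\d s$, i.e.\ $\rho\,(T_\gamma f)^{**}(\rho)\le\rho\,f^{**}(\rho)\cdot\tfrac1{1-\gamma}$ since $\int_0^\rho T_\gamma f\le\int_0^\rho (T_\gamma f)^*$ would go the wrong way --- so instead one uses that $T_\gamma f$ is already dominated in the Hardy--Littlewood sense: $\int_0^\rho (T_\gamma f)^*\le\sup_{|E|=\rho}\int_E T_\gamma f$, and one bounds $\int_E T_\gamma f$ by the same Fubini computation with $\chi_E$ in place of $\chi_{(0,\rho)}$, using $\int_E r^{1-\gamma}\,\d r\le\int_0^{\min\{s,\rho\}} r^{1-\gamma}\,\d r$ by the bathtub principle. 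This gives $(T_\gamma f)^{**}(\rho)\le\tfrac1{1-\gamma}f^{**}(\rho)$ for all nonincreasing $f$, and then for general $f\ge0$ one applies it to $f^*$ and uses monotonicity of $T_\gamma$ together with \eqref{N2} and \eqref{N6}. The main obstacle, and the only subtle point, is this last step of controlling $(T_\gamma f)^*$ rather than $T_\gamma f$ itself, which is handled by the bathtub/level-set argument just described; everything else is elementary.
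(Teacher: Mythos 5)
Your proposal begins by computing the correct $L^1$ and $L^\infty$ endpoint bounds (with norms $\tfrac{1}{2-\gamma}$ and $\tfrac{1}{1-\gamma}$), but then dismisses them, asserting that ``these endpoint bounds alone do not give boundedness on a general r.i.\ space.'' That assertion is false and is the key misconception here. Every r.i.\ space on $(0,1)$ is an \emph{exact} interpolation space between $L^1(0,1)$ and $L^\infty(0,1)$ (Bennett--Sharpley, Ch.~3, Thm.~2.2), and Calder\'on's theorem (Ch.~3, Thm.~2.12) then yields immediately that any linear operator bounded on both endpoints, with norms $M_0,M_1$, is bounded on $X(0,1)$ with norm at most $\max\{M_0,M_1\}$. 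This is precisely the paper's proof: the two one-line endpoint computations plus a citation of Calder\'on's theorem. The pointwise estimate $(T_\gamma f)^{**}\le\tfrac1{1-\gamma}f^{**}$ you are aiming for is in fact a \emph{consequence} of Calder\'on's theorem, not a shortcut around it.

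Your attempt to establish that $**$-estimate directly has a genuine gap. Since $\gamma<1$, the kernel factor $r^{1-\gamma}$ is \emph{increasing} on $(0,s)$, so the bathtub principle gives
\begin{equation*}
\int_{E\cap(0,s)}r^{1-\gamma}\,\d r\ \le\ \int_{s-\min\{s,\rho\}}^{s}r^{1-\gamma}\,\d r,
\end{equation*}
i.e.\ the extremal set of measure $\min\{s,\rho\}$ sits at the \emph{right} end of $(0,s)$, not the left. Your claimed bound $\int_{E\cap(0,s)}r^{1-\gamma}\,\d r\le\int_0^{\min\{s,\rho\}}r^{1-\gamma}\,\d r$ therefore goes the wrong way (take $E=(s-\rho,s)$ with $s>\rho$). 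If one uses the correct bathtub bound $\le\min\{s,\rho\}\,s^{1-\gamma}$, the computation produces the term $\rho\int_\rho^1 s^{-1}f(s)\,\d s$, which is \emph{not} controlled by $\int_0^\rho f^*$: take $f=\chi_{(\rho,1)}$, for which $\rho\int_\rho^1 s^{-1}\,\d s=\rho\log\tfrac1\rho$ while $\int_0^\rho f^*=\rho$, and the ratio blows up as $\rho\to0^+$. So the direct route, as written, does not close.

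A small additional inaccuracy: you say that ``for $f$ nonincreasing the function $T_\gamma f$ need not be monotone.'' In fact $T_\gamma f$ \emph{is} nonincreasing whenever $f$ is: differentiating, $\tfrac{\d}{\d r}T_\gamma f(r)=(1-\gamma)r^{-\gamma}\int_r^1 s^{-2+\gamma}f(s)\,\d s - r^{-1}f(r)\le 0$, since $\int_r^1 s^{-2+\gamma}f(s)\,\d s\le f(r)\,\tfrac{r^{-1+\gamma}}{1-\gamma}$. This makes your Fubini computation for nonincreasing $f$ directly give $(T_\gamma f)^{**}\le\tfrac1{1-\gamma}f^{**}$ for such $f$; but passing from nonincreasing $f$ to general $f$ still requires either the bathtub step (which, as shown, does not work as stated) or Calder\'on's theorem --- at which point one might as well use Calder\'on directly, as the paper does.
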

\begin{proof}
    By~\cite[Chapter~3, Theorem~2.2]{BS}, $X(0,1)$ is an exact interpolation space between $L^1(0,1)$ and $L^{\infty}(0,1)$. Moreover, using the Fubini's theorem and elementary integration, we get
    \begin{equation*}
        \|T_{\gamma}f\|_{L^1(0,1)}= \tfrac{1}{2-\gamma} \, \|f\|_{L^1(0,1)}\quad \ \text{for} \ f\in L^0_+(0,1),
    \end{equation*}
    and, owing to a trivial estimate,
    \begin{equation*}
        \|T_{\gamma}f\|_{L^{\infty}(0,1)} \le \tfrac{1}{1-\gamma} \, \|f\|_{L^{\infty}(0,1)}\quad \  \text{for} \ f\in L^0_+(0,1).
    \end{equation*}
     Thanks to an interpolation theorem by Calder\'on \cite[Chapter 3, Theorem 2.12]{BS}, $T_{\gamma}$ is bounded on $X(0,1)$, and~\eqref{E:Cald Tg} holds.
\end{proof}
 
\begin{lemma}\label{L:half-interval}
 Let $\|\cdot\|_{X(0,1)}$ be a rearrangement-invariant function norm and   let  $\alpha\in\mathbb R$. Then,
    \begin{equation}\label{E:half-interval}
       \|s^{\alpha}\chi_{(r,1)}(s)\|_{X(0,1)} \ \approx \ \|s^{\alpha}\chi_{\left(r,\frac 12\right)}(s)\|_{X(0,1)}
    \end{equation}
    for every $r\in  (0,\tfrac{1}{4})$, up a constant depending only on $\alpha$.  
\end{lemma}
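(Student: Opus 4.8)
The plan is to trim the upper portion of the interval $(r,1)$, on which the weight $s^\alpha$ is essentially constant, and to absorb its contribution into a comparable fixed subinterval of $(r,\tfrac12)$. This is entirely analogous to the argument used in the proof of Lemma~\ref{L:interval-norm}.

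First, the bound of the right-hand side of~\eqref{E:half-interval} by its left-hand side is immediate from~\eqref{N2}, since $s^\alpha\chi_{(r,\frac12)}(s)\le s^\alpha\chi_{(r,1)}(s)$ for $s\in(0,1)$. For the reverse inequality, I would write $s^\alpha\chi_{(r,1)}(s)=s^\alpha\chi_{(r,\frac12)}(s)+s^\alpha\chi_{(\frac12,1)}(s)$ and apply the triangle inequality~\eqref{N1a}, so that it suffices to estimate $\|s^\alpha\chi_{(\frac12,1)}(s)\|_{X(0,1)}$ by a constant multiple of $\|s^\alpha\chi_{(r,\frac12)}(s)\|_{X(0,1)}$. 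On $(\tfrac12,1)$ one has $s^\alpha\le\max\{1,2^{-\alpha}\}$, whence, by~\eqref{N2}, $\|s^\alpha\chi_{(\frac12,1)}(s)\|_{X(0,1)}\le\max\{1,2^{-\alpha}\}\,\|\chi_{(\frac12,1)}\|_{X(0,1)}$.

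Next, rearrangement invariance~\eqref{N6} together with the triangle inequality yields $\|\chi_{(\frac12,1)}\|_{X(0,1)}=\|\chi_{(0,\frac12)}\|_{X(0,1)}\le\|\chi_{(0,\frac14)}\|_{X(0,1)}+\|\chi_{(\frac14,\frac12)}\|_{X(0,1)}=2\|\chi_{(\frac14,\frac12)}\|_{X(0,1)}$, using~\eqref{N6} once more for the last equality. Finally, since $r<\tfrac14$, on the interval $(\tfrac14,\tfrac12)$ one has $s^\alpha\ge\min\{4^{-\alpha},2^{-\alpha}\}>0$, so $\chi_{(\frac14,\frac12)}(s)\le(\min\{4^{-\alpha},2^{-\alpha}\})^{-1}\,s^\alpha\chi_{(\frac14,\frac12)}(s)\le(\min\{4^{-\alpha},2^{-\alpha}\})^{-1}\,s^\alpha\chi_{(r,\frac12)}(s)$; a further use of~\eqref{N2} then chains all these estimates into the desired bound, with a multiplicative constant depending only on $\alpha$.

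I expect no genuine obstacle here: this is a routine trimming lemma. The only point deserving a little care is that the comparability constants for $s^\alpha$ on the subintervals $(\tfrac12,1)$ and $(\tfrac14,\tfrac12)$ depend on the sign of $\alpha$, so one should bound $s^\alpha$ from above and below by the appropriate maximum and minimum of its endpoint values rather than by a single power fixed in advance.
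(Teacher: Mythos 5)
Your proof is correct, but it reaches the key bound by a route different from the paper's. Both proofs begin by splitting $s^\alpha\chi_{(r,1)}$ into $s^\alpha\chi_{(r,\frac12)}+s^\alpha\chi_{(\frac12,1)}$ and applying the triangle inequality, so the task reduces to controlling $\|s^\alpha\chi_{(\frac12,1)}\|_{X(0,1)}$. You do this entirely via elementary pointwise bounds on $s^\alpha$ over the \emph{fixed} intervals $(\frac12,1)$ and $(\frac14,\frac12)$, combined with rearrangement invariance~\eqref{N6} to equate norms of characteristic functions of equal-measure sets; no operator theory enters. The paper instead compares $s^\alpha\chi_{(\frac12,1)}(s)\le s^\alpha\chi_{(2r,1)}(s)$ (using $2r\le\frac12$), observes that $\chi_{(2r,1)}(s)=\chi_{(r,\frac12)}(s/2)$, and invokes the boundedness (with norm $\le1$) of the dilation operator $\mathcal E_2$ on $X(0,1)$ from~\eqref{E:dilation} to land directly on $2^\alpha\|s^\alpha\chi_{(r,\frac12)}(s)\|_{X(0,1)}$, yielding the crisp constant $2^\alpha+1$. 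Your argument is more elementary and self-contained, relying only on the axioms \eqref{N1a}, \eqref{N2}, \eqref{N6}; the paper's is slightly slicker and gives a cleaner explicit constant, at the mild cost of appealing to the dilation estimate. Both are sound, and both correctly keep the constant depending only on $\alpha$.
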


\begin{proof} The bound of the right-hand side of~\eqref{E:half-interval} via its left-hand side is trivial. In order to prove the opposite bound, fix $r\in\left(0,\frac{1}{4}\right)$. Then,
    \begin{equation*}
        \|s^{\alpha}\chi_{(r,1)}(s)\|_{X(0,1)} \le \|s^{\alpha}\chi_{\left(r,\frac12\right)}(s)\|_{X(0,1)} + \|s^{\alpha}\chi_{\left(\frac12,1\right)}(s)\|_{X(0,1)}.
    \end{equation*}
    Next, since $2r\le\frac12$, one has
    \begin{align*}
        \|s^{\alpha}\chi_{\left(\frac12,1\right)}(s)\|_{X(0,1)} & \le  \|s^{\alpha}\chi_{(2r,1)}(s)\|_{X(0,1)} = \|s^{\alpha}\chi_{\left(r,\frac12\right)}\left(\tfrac{s}{2}\right)\|_{X(0,1)}
            \\
        &= 2^{\alpha} \|\left(\tfrac{s}{2}\right)^{\alpha}\chi_{\left(r,\frac 12\right)}\left(\tfrac{s}{2}\right)\|_{X(0,1)} \le 2^{\alpha}\|s^{\alpha}\chi_{\left(r,\frac 12\right)}(s)\|_{X(0,1)},
    \end{align*}
    where the last inequality holds because of the boundedness of the dilation operator~$\mathcal E_2$ on $X(0,1)$, with norm not exceeding $1$. Hence, 
the bound of the left-hand side of~\eqref{E:half-interval} via  its right-hand times $2^\alpha +1$ follows.
\end{proof}

\begin{lemma}\label{L:comparison-of-conditions}
   Let $\|\cdot\|_{X(0,1)}$ be a rearrangement-invariant function norm and   let  $\alpha\in  [0,\infty)$. Then,
    \begin{equation}\label{E:comparison-of-conditions}
        \|s^{\alpha}\chi_{(0,r)}(s)\|_{X(0,1)} \ \approx  \ r^{\alpha}\|\chi_{(0,r)}\|_{X(0,1)}
    \end{equation}
    for every $r\in (0,1)$, with equivalence constants depending only on $\alpha$.
\end{lemma}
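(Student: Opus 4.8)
Fix $r\in(0,1)$. The plan is to prove the two inequalities hidden in \eqref{E:comparison-of-conditions} separately, each by a direct appeal to the axioms of an r.i.\ function norm; no machinery beyond Section~\ref{sec2.2} is needed.

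For the estimate $\lesssim$, I would simply note that, since $\alpha\ge0$, we have $s^{\alpha}\chi_{(0,r)}(s)\le r^{\alpha}\chi_{(0,r)}(s)$ for $s\in(0,1)$. The monotonicity property \eqref{N2} and homogeneity \eqref{N1b} then yield
$\|s^{\alpha}\chi_{(0,r)}(s)\|_{X(0,1)}\le r^{\alpha}\|\chi_{(0,r)}\|_{X(0,1)}$,
which is one half of \eqref{E:comparison-of-conditions} with constant $1$.

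For the reverse estimate, the idea is to localize to the dyadic annulus $(r/2,r)$, on which the weight $s^{\alpha}$ is bounded below by $(r/2)^{\alpha}=2^{-\alpha}r^{\alpha}$. This gives the pointwise bound $s^{\alpha}\chi_{(0,r)}(s)\ge 2^{-\alpha}r^{\alpha}\chi_{(r/2,r)}(s)$ on $(0,1)$, so that \eqref{N2} produces $\|s^{\alpha}\chi_{(0,r)}(s)\|_{X(0,1)}\ge 2^{-\alpha}r^{\alpha}\|\chi_{(r/2,r)}\|_{X(0,1)}$. It then remains to compare $\|\chi_{(r/2,r)}\|_{X(0,1)}$ with $\|\chi_{(0,r)}\|_{X(0,1)}$. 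By rearrangement invariance \eqref{N6} one has $\|\chi_{(r/2,r)}\|_{X(0,1)}=\|\chi_{(0,r/2)}\|_{X(0,1)}$, since the two characteristic functions are equimeasurable; and writing $\chi_{(0,r)}=\chi_{(0,r/2)}+\chi_{[r/2,r)}$, the triangle inequality \eqref{N1a} together with \eqref{N6} once more gives $\|\chi_{(0,r)}\|_{X(0,1)}\le 2\|\chi_{(0,r/2)}\|_{X(0,1)}$. Combining these yields $\|s^{\alpha}\chi_{(0,r)}(s)\|_{X(0,1)}\ge 2^{-\alpha-1}r^{\alpha}\|\chi_{(0,r)}\|_{X(0,1)}$, which is the remaining half of \eqref{E:comparison-of-conditions}.

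There is essentially no obstacle here: both equivalence constants obtained, namely $1$ and $2^{-\alpha-1}$, depend only on $\alpha$, as claimed, and since $r<1$ the annulus $(r/2,r)$ is automatically contained in $(0,1)$, so no truncation issues arise. The only point warranting a word of care is the passage from the annulus to the initial interval, handled above by equimeasurability and one application of the triangle inequality; the case $\alpha=0$ is of course trivial, as then $s^{\alpha}\equiv1$.
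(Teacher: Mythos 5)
Your proof is correct and takes essentially the same route as the paper's: the pointwise bound $s^{\alpha}\le r^{\alpha}$ on $(0,r)$ gives one inequality, and the other follows by localizing the weight to the annulus $(r/2,r)$, using equimeasurability to equate $\|\chi_{(r/2,r)}\|_{X}$ with $\|\chi_{(0,r/2)}\|_{X}$, and the triangle inequality to compare $\|\chi_{(0,r)}\|_{X}$ with $\|\chi_{(0,r/2)}\|_{X}$. The paper runs the same chain, only written starting from $r^{\alpha}\|\chi_{(0,r)}\|_{X}$ rather than from $\|s^{\alpha}\chi_{(0,r)}(s)\|_{X}$.
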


\begin{proof} The assertion follows at once from the chain
    \begin{align*}
        \left\|s^{\alpha}\chi_{(0,r)}(s)\right\|_{{X(0,1)}} &\le r^{\alpha}\left\|\chi_{(0,r)}\right\|_{{X(0,1)}}
        \le 2 r^{\alpha}\left\|\chi_{\left(\frac{r}{2},r\right)}\right\|_{{X(0,1)}}
            \\
        &\lesssim \left\|s^{\alpha}\chi_{\left(\frac{r}{2},r\right)} (s)\right\|_{X(0,1)}
        \le \left\|s^{\alpha}\chi_{(0,r)}(s)\right\|_{X(0,1)}
    \end{align*}   
    for every $r\in (0,1)$, in which the constant in $\lesssim$ depends only on $\alpha$.
\end{proof}

\section{Proofs of embeddings into Morrey spaces}

The present section is devoted to the proof of the main results from Subsection~\ref{S:morrey}.   

\begin{proof}[Proof of Theorem~\ref{T:2}]
The full statement of this theorem follows from the circle of implications  $(i) \Rightarrow (iii) \Rightarrow (ii) \Rightarrow (i)$, which are established below. However, we also present a direct proof of the implication $(iii) \Rightarrow (i)$. This provides us with a self-contained proof of the equivalence  $(i) \Leftrightarrow (iii)$, which avoids calling into play Marcinkiewicz spaces and, in particular, the characterization of the optimal target norm \eqref{E:eucl_opt_norm-john} in Sobolev embeddings into rearrangement-invariant spaces.
\\ Unless otherwise stated, throughout this proof the constants in the relations \lq\lq $\lesssim$'' and \lq\lq $\gtrsim$'' only depend on $n$, $m$ and $\varphi$.
\\ 
$(iii) \Rightarrow (i)$ As recalled in Remark~\ref{rem-m=1}, when $m=1$ this implication is established in~\cite[Theorem~1.2]{CPcamp}. In that paper, the function  $\varphi$ is assumed to be continuous and balls are replaced with cubes in the definition of the Morrey norm, but the same proof applies in the current framework. We may thus focus on the case when $m\ge2$.  By~\cite[Theorem~6.2]{CPS}, one has that
    \begin{equation}\label{E:mINT-emb}
        W^{m}X (\Omega) \to W^{1}X_{m-1} (\Omega),
    \end{equation}
    where $\|\cdot\|_{X_{m-1}(0,1)}$ is the r.i.~function norm defined as in \eqref{E:eucl_opt_norm-john}, with $m$ replaced with $m-1$.
 
 \noindent   Consequently, the conclusion will follow if we show that
    \begin{equation}\label{E:first-order-morrey}
        W^{1}X_{m-1}(\Omega)\to \mathcal M^{\varphi(\cdot)}(\Omega).
    \end{equation}
    By the result for $m=1$, the embedding~\eqref{E:first-order-morrey} holds if (and only if)
    \begin{equation}\label{E:th2ACPmor}
        \sup_{r \in (0,1)}\frac{1}{\varphi(r^{\frac1n})}\left\|s^{-1+\frac1n}\chi_{(r,1)}(s)\right\|_{X'_{m-1}(0,1)}<\infty.
    \end{equation}
    Owing to~\eqref{E:eucl_opt_norm-john}, the latter condition can be rewritten as
    \begin{equation}\label{E:th2ACPmor2}
        \sup_{r \in (0,1)}\frac{1}{\varphi(r^{\frac1n})}\left\|s^{\frac{m-1}{n}}\left((\cdot)^{-1+\frac1n}\chi_{(r,1)}(\cdot)\right)^{**}(s)\right\|_{X'(0,1)}<\infty.
    \end{equation}
    Fix $r\in\left(0,\frac12\right)$ and define
    \begin{equation*}
        f(s)= s^{-1+\frac1n}\chi_{(r,1)}(s) \quad \ \text{for} \ s\in (0,1).
    \end{equation*}
    Then
    \begin{equation*}
        f^*(\varrho)=(\varrho+r)^{-1+\frac1n}\chi_{(0,1-r)}(\varrho) \quad \ \text{for} \ \varrho\in (0,1),
    \end{equation*}
    and, since $r<1-r$,
    \begin{align*}
        f^{**}(s) &= \frac{1}{s}\int_{0}^{s}(\varrho+r)^{-1+\frac1n}\chi_{(0,1-r)}(\varrho)\,\d \varrho
            \\
        &\le \frac{\chi_{(0,r)}(s)}{s}\int_{0}^{s}r^{-1+\frac1n}\,\d \varrho + \frac{\chi_{(r,1-r)}(s)}{s}\int_{0}^{s}\varrho^{-1+\frac1n}\,\d \varrho
           + \frac{\chi_{(1-r,1)}(s)}{s}\int_{0}^{1-r}(\varrho+r)^{-1+\frac1n}\,\d \varrho
            \\
        &\le r^{-1+\frac1n} \chi_{(0,r)}(s) + ns^{-1+\frac1n} \chi_{(r,1-r)}(s) + ns^{-1}\chi_{(1-r,1)}(s)\quad \ \text{for} \ s\in (0,1).
    \end{align*}
    Therefore,
    \begin{align*}
        &\left\|s^{\frac{m-1}{n}} \left((\cdot)^{-1+\frac1n}\chi_{(r,1)}(\cdot)\right)^{**}(s)\right\|_{X'(0,1)}
            \\
        &\qquad \qquad \le \left\|s^{\frac{m-1}{n}}\left(r^{-1+\frac1n}\chi_{(0,r)}(s) + ns^{-1+\frac1n}
        \chi_{(r,1-r)}(s)+ ns^{-1}\chi_{(1-r,1)}(s)\right)\right\|_{X'(0,1)}
           \\
        &\qquad \qquad \le r^{-1+\frac1n}\left\|s^{\frac{m-1}{n}}\chi_{(0,r)}(s)\right\|_{X'(0,1)} + n \left\|s^{-1 +\frac{m}{n}}\chi_{(r,1-r)}(s)\right\|_{X'(0,1)}
        + n \left\|s^{-1 +\frac{m-1}{n}}\chi_{(1-r,1)}(s)\right\|_{X'(0,1)}.
    \end{align*}
    We claim that each addend  on the rightmost side of this inequality is bounded above, up to multiplicative constants, by $ \left\|s^{-1 +\frac{m}{n}}\chi_{(r,1)}(s)\right\|_{X'(0,1)}$. Indeed, while this is clearly true for the  middle one, one has 
    \begin{align*}
        \left\|s^{\frac{m-1}{n}}\chi_{(0,r)}(s)\right\|_{X'(0,1)} &\le r^{\frac{m-1}{n}}\left\|\chi_{(0,r)}\right\|_{X'(0,1)} = r^{\frac{m-1}{n}}\left\|\chi_{(r,2r)}\right\|_{X'(0,1)}\\
        &        \le r^{\frac{m-1}{n}}(2r)^{1-\frac{m}{n}}\left\|s^{-1+\frac{m}{n}}\chi_{(r,2r)}(s)\right\|_{X'(0,1)}
             = 2^{1-\frac{m}{n}} r^{1-\frac{1}{n}}\left\|s^{-1 +\frac{m}{n}}\chi_{(r,2r)}(s)\right\|_{X'(0,1)}\\
        &
        \le 2^{1-\frac{m}{n}} r^{1-\frac{1}{n}}\left\|s^{-1 +\frac{m}{n}}\chi_{(r,1)}(s)\right\|_{X'(0,1)},
    \end{align*}
   and
    \begin{align*}
        \left\|s^{-1 +\frac{m-1}{n}}\chi_{(1-r,1)}(s)\right\|_{X'(0,1)}
        &= \left\|s^{-\frac{1}{n}} s^{-1 +\frac{m}{n}}  \chi_{(1-r,1)}(s)\right\|_{X'(0,1)}\le (1-r)^{-\frac{1}{n}}\left\|s^{-1 +\frac{m}{n}}\chi_{(1-r,1)}(s)\right\|_{X'(0,1)}
            \\
        &\le 2^{\frac{1}{n}}\left\|s^{-1 +\frac{m}{n}}\chi_{(r,1)}(s)\right\|_{X'(0,1)}.
    \end{align*}
    Altogether,
    \begin{equation*}
       \left\|s^{\frac{m-1}{n}}\left((\cdot)^{-1+\frac1n}\chi_{(r,1)}(\cdot)\right)^{**}(s)\right\|_{X'(0,1)}
       \lesssim \left\|s^{-1 +\frac{m}{n}}\chi_{(r,1)}(s)\right\|_{X'(0,1)}.
    \end{equation*}
    Consequently,
    \begin{align*}
        \sup_{r \in \left(0,\frac12\right)} \frac{1}{\varphi(r^{\frac1n})}\left\|s^{\frac{m-1}{n}}\left((\cdot)^{-1+\frac1n}\chi_{(r,1)}(\cdot)\right)^{**}(s)\right\|_{X'(0,1)}
        &\lesssim \sup_{r \in \left(0,\frac12\right)} \frac{1}{\varphi(r^{\frac1n})}\left\|s^{-1 +\frac{m}{n}}\chi_{(r,1)}(s)\right\|_{X'(0,1)}
            \\
         &\le \sup_{r \in \left(0,1\right)} \frac{1}{\varphi(r^{\frac1n})}\left\|s^{-1 +\frac{m}{n}}\chi_{(r,1)}(s)\right\|_{X'(0,1)}.
    \end{align*}  
    Hence, by the very definition of the function norm $\left\|\,\cdot\,\right\|_{X'_{m-1}(0,1)}$,
    \begin{align*}
        \sup_{r \in \left(0,\frac12\right)}\frac{1}{\varphi(r^{\frac1n})}\left\|s^{-1+\frac1n}\chi_{(r,1)}(s)\right\|_{X'_{m-1}(0,1)}
       \   \lesssim \ \sup_{r\in\left(0,1\right)}\frac{1}{\varphi(r^{\frac1n})}\left\|s^{-1 +\frac{m}{n}}\chi_{(r,1)}(s)\right\|_{X'(0,1)}.
    \end{align*}  
    Applying Lemma~\ref{L:standard}  with $\theta(r)={\varphi(r^{\frac1n})}$  and $\eta(r)=\left\|s^{-1+\frac{1}{n}}\chi_{\left(r,1\right)}(s)\right\|_{X'_{m-1}(0,1)}$, and making use of the assumption~\eqref{E:admissible-new}, we ensure that
    \begin{align*}
        \sup_{r \in \left(0,1\right)}\frac{1}{\varphi(r^{\frac1n})}\left\|s^{-1+\frac1n}\chi_{(r,1)}(s)\right\|_{X'_{m-1}(0,1)}
        \  \lesssim \ \sup_{r\in\left(0,1\right)}\frac{1}{\varphi(r^{\frac1n})}\left\|s^{-1 +\frac{m}{n}}\chi_{(r,1)}(s)\right\|_{X'(0,1)}.
    \end{align*}
    Since the right-hand side is finite by~\eqref{E:7}, this establishes~\eqref{E:th2ACPmor}, and the latter equation in turn implies~\eqref{E:first-order-morrey}.
    \\
    $(i) \Rightarrow (iii)$.  For simplicity of notation, we shall prove   this  implication under the assumption that  $\Omega$ is the ball   centered at $0$  with $|\Omega|=1$. Thanks to Remark~\ref{rem-nov6}, it will be clear that an analogous argument applies by replacing this ball with a sufficiently small ball contained in any domain $\Omega$.
\\ Given a nonnegative function~$f\in X(0,1)$, define the function $v_f\colon \Omega\to\mathbb R$ as
    \begin{equation}\label{E:new5}
        v_f(x)=  \int_{\omega_n |x|\sp n}\sp{1}r\sp{-m+\frac mn} f(r)(r-\omega_n|x|\sp n)^{m-1}\,\d r \quad \ 
 \text{for} \ x\in \Omega,
            \end{equation}
            where $\omega_n$ denotes the Lebesgue measure of the unit ball.
    Then, $v_f$ is   a nonnegative  radially decreasing and $m$-times weakly-differentiable  function. Similarly as in~\cite[proof of Theorem~A]{KP1}, it can be shown that $v_f\in W^mX(\Omega)$ and
    \begin{equation}\label{E:Tmo2}        \left\|v_f\right\|_{W^mX(\Omega)} \ \lesssim \ \|f\|_{_{X(0,1)}}.
    \end{equation}
  We claim that
    \begin{equation}\label{E:T3}
        \dashint_{B(r)}v_f \,\d x \ \gtrsim \  \int_{r}^{1} s\sp{-1+\frac{m}{n}} f(s)\,\d s \quad \ \text{for} \ r\in \left(0,1\right),
    \end{equation}   
    where  
    \begin{align}
        \label{fi5}
     B(r)  \   \text{is the ball, centered at} \ 0,  \   \text{with measure} \ r.
    \end{align}
    Indeed, given $r\in \left(0,1\right)$, changing variables and using the Fubini theorem yields the following chain:
    \begin{align*}
        \dashint_{B(r)} v_f \,\d x &=\dashint_{B(r)}\int_{\omega_n |x|\sp n}\sp{1} s\sp{-m+\frac mn} f(s)(s-\omega_n|x|\sp n)^{m-1}\,\d s\,\d x
            \\
        &=   \frac{n \omega_n}{r}\int_{0}^{(\omega_n^{-1}r)^{\frac1n}}\rho^{n-1}\int_{\omega_n \rho\sp n}\sp{1}s\sp{-m+\frac mn} f(s)(s-\omega_n \rho\sp n)^{m-1}\,\d s\,\d \rho
            \\
        &=   \frac{1}{r} \int_{0}^{r}\int_{\varrho}\sp{1} s\sp{-m+\frac mn} f(s)(s-\varrho)^{m-1}\,\d s\,\d\varrho
            \\
        &\ge  \frac{1}{r}  \int_{0}^{r}\int_{r}\sp{1}s\sp{-m+\frac mn} f(s)(s-\varrho)^{m-1}\,\d s\,\d\varrho
            \\
        &=  \frac{1}{r} \int_{r}\sp{1} s\sp{-m+\frac mn} f(s)\int_{0}^{r}(s-\varrho)^{m-1}\,\d\varrho\,\d s
            \\
        &\ge  \frac{1}{r}  \int_{r}\sp{1}s\sp{-m+\frac mn} f(s)\int_{0}^{\frac{r}{2}}(s-\varrho)^{m-1}\,\d\varrho\,\d s
            \\
        &\ge \frac{1}{2^m} \int_{r}\sp{1}s\sp{-1+\frac mn} f(s)\,\d s.
    \end{align*}
    The inequality~\eqref{E:T3} is thus established.
    \\ From Lemma~\ref{L:standard} applied with $\theta(r)={\varphi(r^{\frac1n})}$ and $\eta(r)=\left\|s^{-1+\frac{m}{n}}\chi_{\left(r,1\right)}(s)\right\|_{X'(0,1)}$, and the assumption~\eqref{E:admissible-new} we obtain that
    \begin{equation*}
        \sup_{r\in \left(0,1\right)}\frac{1}{\varphi(r^{\frac1n})}\left\|s^{-1+\frac{m}{n}}\chi_{\left(r,1\right)}(s)\right\|_{X'(0,1)} \ 
       \lesssim \ \sup_{r\in \left(0,\frac{1}{2}\right)}\frac{1}{\varphi(r^{\frac1n})}\left\|s^{-1+\frac{m}{n}}\chi_{\left(r,1\right)}(s)\right\|_{X'(0,1)}.
    \end{equation*}  
    Hence, via~\eqref{E:Tmo2},~\eqref{E:T3}, and~\eqref{E:SO-MAemb}, one infers that
    \begin{align*}
        \sup_{r\in \left(0,1\right)}\frac{1}{\varphi(r^{\frac1n})}\left\|s^{-1+\frac{m}{n}}\chi_{\left(r,1\right)}(s)\right\|_{X'(0,1)}
        &\lesssim \sup_{r\in\left(0,\frac{1}{2}\right)}\frac{1}{\varphi(r^{\frac1n})}\sup_{f \in X_+(0,1)}\frac{\int_{r}\sp{1}  s\sp{-1+\frac mn} f(s)\,\d s}{\|f\|_{X(0,1)}}
            \\
        &\lesssim \sup_{f \in X_+(0,1)}\frac{1}{\|f\|_{X(0,1)}}\sup_{r\in\left(0,\frac{1}{2}\right)}\frac{1}{\varphi(r^{\frac1n})}\dashint_{B(r)} v_f \,\d x
            \\
        &\le \sup_{f \in X_+(0,1)}\frac{\|v_f \|_{\mathcal M^{\varphi(\cdot)}(\Omega)}}{\|f\|_{X(0,1)}}
            \\
        &\lesssim \sup_{f\in X_+(0,1)}\frac{\|v_f\|_{\mathcal M^{\varphi(\cdot)}(\Omega)}}{\|v_f\|_{W^mX(\Omega)}}
                 \\
        &\le \sup_{u\neq 0}\frac{\|u\|_{\mathcal M^{\varphi(\cdot)}(\Omega)}}{\|u\|_{W^mX(\Omega)}} < \infty.
    \end{align*} Hence,~\eqref{E:7} follows.
\\ $(ii) \Rightarrow (i)$. This is a trivial consequence of   \eqref{marc-mor}.
    \\ $(iii) \Rightarrow (ii)$. Assume that~\eqref{E:7} holds and define the function~$\psi$ by
    \begin{equation*}
        \psi(r)=\|s^{-1+\frac{m}{n}}\chi_{(r^n,1)}(s)\|_{X'(0,1)}\quad \ \text{for} \ r\in(0,1).
    \end{equation*}
    By~\cite[Theorem~6.2]{CPS} (in case when $\Omega$ is Lipschitz, see also~\cite[Theorem~A]{KP1}),
    \begin{equation}\label{E:mINT-embm}
        W^{m}X (\Omega) \to X_{m} (\Omega),
    \end{equation}
    where $\|\cdot\|_{X_{m}(0,1)}$ is the r.i.~function norm defined by \eqref{E:eucl_opt_norm-john}.
   Hence, in particular,  $\|\chi_{(0,r)}\|_{X'_{m}(0,1)}=\|s^{\frac{m}{n}} \chi_{(0,r)}^{**}(s)\|_{X'(0,1)}$ for every $r\in(0,1)$. By Lemma~\ref{L:interval-norm},  
    \begin{equation*}
        \|\chi_{(0,r)}\|_{X'_{m}(0,1)} \ \approx \  r \, \|s^{-1 +\frac{m}{n}} \chi_{(r,1)}(s)\|_{X'(0,1)}\quad \ \text{for} \ r\in (0,\tfrac12).
    \end{equation*}
    Therefore, thanks to equation~\eqref{feb100},
    $$\|\chi_{(0,r)}\|_{X_{m}(0,1)} \ \approx \ \frac{1}{\psi(r^{\frac{1}{n}})} \quad \ \text{for $r\in\left(0,\tfrac12\right)$.}$$  Since 
    $$\|\chi_{(0,r)}\|_{\mathfrak M^{\psi(\cdot)}(0,1)} = \frac{1}{\psi(r^{\frac{1}{n}})},$$
    and 
    the Marcinkiewicz space $\mathfrak M^{\psi(\cdot)}(\Omega)$ is the largest space 
    among all r.i.~spaces 
     which share the same fundamental function,
     we infer 
      that $W^mX(\Omega)\to \mathfrak M^{\psi(\cdot)}(\Omega)$. Since~\eqref{E:7} implies that $\psi\lesssim\varphi$, the embedding~\eqref{E:SO-MARCemb} follows.
\end{proof}

\begin{proof}[Proof of Theorem~\ref{T:morrey-optimal-range}]
    The very definition of the function~$\widehat\varphi$ ensures that it is admissible and
    \begin{equation*}
        \sup_{r\in\left(0,1\right)}\frac{1}{\widehat\varphi(r)}\left\|s^{-1 +\frac{m}{n}}\chi_{(r^n,1)}(s)\right\|_{X'(0,1)}=1.
    \end{equation*}
    Thus, the embedding~\eqref{E:optimal-morrey-range-embedding} follows via Theorem~\ref{T:2}. 
    \\ To prove the optimality of $\mathcal M^{\widehat\varphi(\cdot)}(\Omega)$ among Morrey spaces, suppose that $\varphi\colon(0,\infty)\to(0,\infty)$ is an admissible function satisfying $W^m X(\Omega) \to \mathcal M^{\varphi(\cdot)}(\Omega)$. Then, by Theorem~\ref{T:2}, the condition~\eqref{E:7} must be satisfied. Hence, the definition of~$\widehat\varphi$ tells us that
        \begin{equation*}
        \widehat\varphi(r) \ \lesssim \ \varphi(r)\quad \ \text{for} \ r\in (0,\tfrac12)
    \end{equation*}
    up to a constant independent of $r$, whence the embedding~$\mathcal M^{\widehat\varphi(\cdot)}(\Omega) \to \mathcal M^{\varphi(\cdot)}(\Omega)$ follows.
\end{proof}

\begin{proof}[Proof of Theorem~\ref{T:morrey-optimal-domain}] Unless otherwise stated, throughout this proof  the relations \lq\lq $\lesssim$'' and \lq\lq $ \gtrsim$'' hold up to constants depending on $n$, $m$ and $\varphi$.
    The axioms of the definition of rearrangement-invariant function norm for 
    $\|\cdot\|_{X^{\#}(0,1)}$ are easily verified.
    Let us just notice that  property~\eqref{N4} is a consequence of the fact that
    \begin{equation*}
        \|\chi_{(0,1)}\|_{X^{\#}(0,1)} \ \approx \  \sup_{r\in(0,1)}\frac{1-r^{\frac{m}{n}}}{\varphi(r^{\frac{1}{n}})}\  \le \ \sup_{r\in(0,1)}\frac{1}{\varphi(r^{\frac{1}{n}})},
    \end{equation*}
   since the latter supremum is finite by~\eqref{E:vanishing-condition}. 
   \\ Now, assume that $\|\cdot\|_{X(0,1)}$ is a rearrangement-invariant function norm. We claim that
    \begin{equation}\label{E:optimality-morrey-chain}
        \sup_{r\in(0,1)}\frac{1}{\varphi(r^{\frac{1}{n}})}  \|s^{-1+\frac{m}{n}}\chi_{(r,1)}(s)\|_{X'(0,1)}
        \ \approx \  \sup_{\|f\|_{X(0,1)}\le1}\|f\|_{X^{\#}(0,1)}.
    \end{equation}
    To verify this claim,
   fix $r\in (0,\tfrac14)$ and observe that, by~\eqref{n.assoc.2} and Lemma~\ref{L:double-integral}, 
    \begin{align*}
        \|s^{-1+\frac{m}{n}}\chi_{(r,1)}(s)\|_{X'(0,1)} &= \sup_{\|f\|_{X(0,1)}\le1} \int_{0}^{1}\left[(\cdot)^{-1+\frac{m}{n}}\chi_{(r,1)}(\cdot)\right]^*(s) f^{*}(s) \, \d s
            \\
        &= \sup_{\|f\|_{X(0,1)}\le1} \int_{0}^{1}(s+r)^{-1+\frac{m}{n}} \chi_{(0,1-r)}(s) f^{*}(s) \, \d s
            \\
        &\approx \sup_{\|f\|_{X(0,1)}\le1}\left(r^{-1+\frac{m}{n}}\int_{0}^{r}f^{*}(s)\,\d s+\int_{r}^{1-r}s^{-1+\frac{m}{n}}f^{*}(s)\,\d s\right)
            \\
        & \approx \sup_{\|f\|_{X(0,1)}\le1}\int_{r}^{1}s^{-1+\frac{m}{n}}f^{**}(s) \, \d s.
    \end{align*}  
    Consequently,
    \begin{equation*}
        \sup_{r\in \left(0,\frac14\right)} \,  \frac{1}{\varphi(r^{\frac{1}{n}})}\|s^{-1+\frac{m}{n}}\chi_{(r,1)}(s)\|_{X'(0,1)}
        \ \approx \ \sup_{\|f\|_{X(0,1)}\le1}\sup_{r\in \left(0,\frac14\right)} \, \frac{1}{\varphi(r^{\frac{1}{n}})} \int_{r}^{1}s^{-1+\frac{m}{n}}f^{**}(s) \, \d s.
    \end{equation*}
    Lemma~\ref{L:standard} ensures  that
    \begin{equation*}
        \sup_{r\in(0,1)} \,  \frac{1}{\varphi(r^{\frac{1}{n}})}\|s^{-1+\frac{m}{n}}\chi_{(r,1)}(s)\|_{X'(0,1)}
        \ \approx \ \sup_{\|f\|_{X(0,1)}\le1}\sup_{r\in(0,1)}\,  \frac{1}{\varphi(r^{\frac{1}{n}})}\int_{r}^{1}s^{-1+\frac{m}{n}}f^{**}(s)\d s.
    \end{equation*}
    Hence, equation~\eqref{E:optimality-morrey-chain} follows.
    If $X(0,1)=X^{\#}(0,1)$, then
    \begin{equation*}
\sup_{\|f\|_{X(0,1)}\le1}\|f\|_{X^{\#}(0,1)}=1,
    \end{equation*}
    whence, owing to~\eqref{E:optimality-morrey-chain}, the condition~\eqref{E:7} is satisfied. Therefore, by Theorem~\ref{T:2}, embedding~\eqref{E:optimal-morrey-domain-embedding} holds.\\
    We next focus on  the optimality of $X^{\#}(\Omega)$ in~\eqref{E:optimal-morrey-domain-embedding}. If $W^m X (\Omega) \to \mathcal M^{\varphi(\cdot)} (\Omega)$  for some  re\-ar\-ran\-ge\-ment-in\-va\-ri\-ant function norm $\|\cdot\|_{X(0,1)}$, then~\eqref{E:7} holds owing to  Theorem~\ref{T:2}. Thus, by~\eqref{E:optimality-morrey-chain}, 
    \begin{equation*}    \sup_{\|f\|_{X(0,1)}\le1}\|f\|_{X^{\#}(0,1)}<\infty,
    \end{equation*}
    a piece of information which immediately implies the embedding $X(0,1) \to X^{\#}(0,1)$.\\
    Finally, if~\eqref{E:vanishing-condition} is not satisfied, then with the choice $\|\cdot\|_{X(0,1)}=\|\cdot\|_{L^\infty(0,1)}$, one has that $\|\cdot\|_{X'(0,1)}=\|\cdot\|_{L^1(0,1)}$, whence
    \begin{equation*}
        \sup_{r \in (0,1)} \,  \frac{1}{\varphi( r^{\frac1n})}\left\| s^{-1+\frac{m}{n}}\chi_{{(r,1)}}(s)\right\|_{{X'(0,1)}}
        \ \approx \  \sup_{r\in(0,1)}\,  \frac{1-r^{\frac{m}{n}}}{\varphi(r^{\frac{1}{n}})} = \infty,
    \end{equation*}
    and condition~\eqref{E:7} fails. Thus, by Theorem~\ref{T:2}, the embedding
    \begin{equation*}
        W^mL^{\infty}(\Omega) \to \mathcal M^{\varphi(\cdot)}(\Omega)
    \end{equation*}
    cannot hold. Owing to~\eqref{E:imm}, there does not exist any rearrangement-invariant function norm $\|\cdot\|_{X(0,1)}$ that renders the embedding
    \begin{equation*}
        W^mX(\Omega) \to \mathcal M^{\varphi(\cdot)}(\Omega)
    \end{equation*}
    true.
\end{proof}

\begin{proof}[Proof of Theorem~\ref{T:sobolev-morrey-vanishing}]
 \eqref{E:sobolev-to-morrey-vanishing-condition} $\Rightarrow$ \eqref{fi2}. Let $\widehat\varphi$ be the  function defined by~\eqref{E:optimal-morrey-range}. Owing to~\eqref{E:optimal-morrey-range-embedding},
    \begin{equation}\label{PA1}
        \sup_{B\subset\Omega} \, \frac{1}{\widehat\varphi(|B|^{\frac{1}{n}})}\dashint_B|u|\, \d x \ \lesssim \ \|u\|_{W^mX(\Omega)},
    \end{equation}
   up to a constant independent of $u\in W^mX(\Omega)$. Consequently, for every function $u\in W^mX(\Omega)$ such that $\|u\|_{W^mX(\Omega)}\le1$ and every ball $B\subset\Omega$, we have
   \begin{equation}\label{PA2}
        \dashint_B|u|\, \d x \  \lesssim \ \widehat\varphi(|B|^{\frac{1}{n}}),
    \end{equation}
     up to a constant independent of $u\in W^mX(\Omega)$ and of $B$.
\\
    Fix $r\in(0,1)$. Then, via~\eqref{PA1} and~\eqref{PA2},
    \begin{equation}\label{E:thm1.4-balls}
        \begin{split}
            \sup\left\{\psi_{\varphi,u}(r)\colon \|u\|_{W^mX(\Omega)}\le1\right\}
            &= \sup\left\{\sup_{B\subset\Omega,|B|\le r} \,   \frac{1}{\varphi(|B|^{\frac{1}{n}})}\dashint_{B}|u|\,\d x\colon \|u\|_{W^mX(\Omega)}\le1\right\}
                \\
            &\lesssim \, \sup_{|B|\le r} \frac{\widehat\varphi(|B|^{\frac{1}{n}})}{\varphi(|B|^{\frac{1}{n}})}
            = \sup_{s\le r} \, \frac{\widehat\varphi(s^{\frac{1}{n}})}{\varphi(s^{\frac{1}{n}})}
        \end{split}
    \end{equation} up to a constant   independent of $u$.
    By~\eqref{E:sobolev-to-morrey-vanishing-condition}, 
    \begin{equation}\label{E:thm1.4-limit}
        \lim_{r\to0^+} \sup_{s\le r}\, \frac{\widehat\varphi(s^{\frac{1}{n}})}{\varphi(s^{\frac{1}{n}})} =0.
    \end{equation}
    Coupling equation~\eqref{E:thm1.4-balls} with~\eqref{E:thm1.4-limit} yields equation~\eqref{E:morrey-vanishing-full}. The embedding~\eqref{fi2} is thus established.
\\
\eqref{fi2} $\Rightarrow$ \eqref{E:sobolev-to-morrey-vanishing-condition}. 
  We have to show that equation~\eqref{E:sobolev-to-morrey-vanishing-condition} follows from~\eqref{E:morrey-vanishing-full}.  To this purpose, we may assume, as in the proof of Theorem~\ref{T:2}, that $\Omega$ is the ball   centered at $0$  with $|\Omega|=1$. Minor modifications in the argument will show that the same proof applies to any domain $\Omega$.
     \\ Fix $r\in(0,1)$. Then
    \begin{equation}\label{E:thm-1.4-duality}
        \left\|s^{-1+\frac{m}{n}}\chi_{(r,1)}(s)\right\|_{X'(0,1)}
            = \sup\left\{\int_{r}^{1}s^{-1+\frac{m}{n}}f(s)\,\d s \colon \|f\|_{X(0,1)}\le1,f\ge0\right\}.
    \end{equation}
    To each nonnegative function $f$ satisfying $\|f\|_{X(0,1)}\le1$ we associate the function $v_f$ from~\eqref{E:new5}. Owing to equations~\eqref{E:thm-1.4-duality},~\eqref{E:Tmo2},~\eqref{E:T3} and~\eqref{E:psi-definition}, the following chain holds:
    \begin{align*}
        \frac{1}{\varphi(r^{\frac{1}{n}})}\left\|s^{-1+\frac{m}{n}}\chi_{(r,1)}(s)\right\|_{X'(0,1)}
        &= \frac{1}{\varphi(r^{\frac{1}{n}})}\sup\left\{\int_{r}^{1}s^{-1+\frac{m}{n}}f(s)\,\d s \colon
 \|f\|_{X(0,1)}\le1,f\ge0\right\}
            \\
        &\lesssim \frac{1}{\varphi(r^{\frac{1}{n}})}\sup\left\{\dashint_{B(r)}v_f \,\d x \colon \|f\|_{X(0,1)}\le1,f\ge0\right\}
            \\
        &\lesssim \frac{1}{\varphi(r^{\frac{1}{n}})}\sup\left\{\dashint_{B(r)}v_f \,\d x \colon \|v_f\|_{W^mX(\Omega)}\le1\right\}
            \\
        &\leq \sup\left\{\psi_{\varphi,v}(r) \colon \|v\|_{W^mX(\Omega)}\le1\right\},
    \end{align*}
up to multiplicative constants independent of $r$, where $B(r)$ is as in~\eqref{fi5}. By~\eqref{E:morrey-vanishing-full}, the expression on the right vanishes as $r\to0^+$. Hence, equation~\eqref{E:sobolev-to-morrey-vanishing-condition} follows.
\end{proof}

\section{Proofs of embeddings into Campanato spaces}\label{proof Campanato}

 Here, we provide proofs of the main results from Subsection \ref{S:campanato}.
  We begin with a preliminary lemma, which has a role in dealing with the statement {\it (iii)}~in Theorem~\ref{T:3}. Given a multi-index $\alpha=(\alpha_1, \cdots, \alpha_n)$, with $\alpha_i \in  \mathbb N \cup \{0\}$, we set $|\alpha|=  \sum_{i=1}^n \alpha_i$, and denote by 
$D^{\alpha}u$  the partial derivative of $u$ corresponding to $\alpha$.

\begin{lemma}\label{L:polynomial}
    Let $n\in \mathbb N$, $n\geq 2$ and $k\in \mathbb N \cup \{0\}$. If $H$ is a homogeneous harmonic polynomial  of degree $k+1$ in $\mathbb R^n$, then,
    \begin{equation} \label{E:orthoA}
        \int_{\mathbb S^{n-1}}  D^\alpha H  \,   {P}   \, \d \mathcal H^{n-1} =0
    \end{equation}
    for any  multi-index $\alpha$ satisfying $|\alpha| \leq k$ and any homogeneous polynomial $P$ of degree at most $k-|\alpha|$. Here, $\mathbb S^{n-1}$ stands for the $(n-1)$-dimensional unit sphere in $\mathbb R^n$.
\end{lemma}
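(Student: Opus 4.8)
The plan is to reduce \eqref{E:orthoA} to two classical facts about homogeneous polynomials: that the Laplacian commutes with differentiation, and that every homogeneous polynomial decomposes into harmonic components which are $L^2(\mathbb S^{n-1})$-orthogonal when they have distinct degrees. First I would observe that, since $\Delta$ commutes with every constant-coefficient differential operator, $D^\alpha H$ is again harmonic; it is moreover homogeneous of degree $d:=k+1-|\alpha|$, and $d\ge 1$ because $|\alpha|\le k$. Thus it suffices to show that a homogeneous harmonic polynomial of degree $d$ is $L^2(\mathbb S^{n-1})$-orthogonal to every homogeneous polynomial of degree at most $d-1$; the case of a general polynomial $P$ of degree $\le k-|\alpha|$ then follows by linearity over its homogeneous components.

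Next I would recall the classical decomposition: any homogeneous polynomial $P$ of degree $j$ can be written as $P(x)=\sum_{i=0}^{\lfloor j/2\rfloor}|x|^{2i}H_{i}(x)$, where each $H_{i}$ is a homogeneous harmonic polynomial of degree $j-2i$. This is proved by induction on $j$, using that $\Delta$ maps the (finite-dimensional) space of homogeneous polynomials of degree $j$ onto that of degree $j-2$; it is the one step I would write out with some care. Restricting to $\mathbb S^{n-1}$, where $|x|=1$, gives $P=\sum_{i}H_{i}$ on the sphere, and every $H_{i}$ has degree $j-2i\le j\le d-1<d$. Hence the claim reduces to showing that $\int_{\mathbb S^{n-1}}Q\,R\,\d\mathcal H^{n-1}=0$ whenever $Q$ and $R$ are homogeneous harmonic polynomials of distinct degrees $d$ and $e$.

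For this last identity I would apply Green's second identity on the unit ball. Since $Q$ and $R$ are harmonic, $0=\int_{|x|<1}\bigl(Q\,\Delta R-R\,\Delta Q\bigr)\,\d x=\int_{\mathbb S^{n-1}}\bigl(Q\,\partial_\nu R-R\,\partial_\nu Q\bigr)\,\d\mathcal H^{n-1}$, where $\partial_\nu$ denotes the outer unit normal derivative; on $\mathbb S^{n-1}$ this coincides with the radial derivative, so Euler's identity for homogeneous functions yields $\partial_\nu R=e\,R$ and $\partial_\nu Q=d\,Q$ there, whence $0=(e-d)\int_{\mathbb S^{n-1}}Q\,R\,\d\mathcal H^{n-1}$ and the integral vanishes. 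Taking $Q=D^\alpha H$ and $R=H_{i}$ for each $i$ and summing gives \eqref{E:orthoA}. None of these steps is genuinely difficult: the only point that deserves a careful argument is the harmonic decomposition of homogeneous polynomials, everything else being a one-line consequence of Green's formula together with Euler's identity.
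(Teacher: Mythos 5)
Your proof is correct, and its opening reduction is the same as the paper's: note that $D^\alpha H$ is again harmonic (since $\Delta$ commutes with $D^\alpha$) and homogeneous of degree $k+1-|\alpha|$, which exceeds the degree of $P$. At that point the paper simply invokes \cite[Lemma~2.2.1]{AG}, which states that a homogeneous harmonic polynomial is $L^2(\mathbb S^{n-1})$-orthogonal to every homogeneous polynomial of strictly lower degree, and stops. You instead supply a self-contained proof of that cited fact: first the classical decomposition $P=\sum_i |x|^{2i}H_i$ of a homogeneous polynomial into harmonics of descending degree (restricting to $|x|=1$), then the orthogonality of homogeneous harmonics $Q,R$ of distinct degrees $d\ne e$ via Green's second identity on the unit ball together with Euler's relation $\partial_\nu Q=dQ$, $\partial_\nu R=eR$ on $\mathbb S^{n-1}$, which gives $(e-d)\int_{\mathbb S^{n-1}}QR\,\d\mathcal H^{n-1}=0$. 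Both arguments are sound; the paper's is shorter by deferring to a standard reference, while yours is longer but entirely elementary and independent of the citation, which is a reasonable trade-off. One small remark: the lemma already assumes $P$ homogeneous, so your closing appeal to linearity over homogeneous components is harmless but not needed.
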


\begin{proof}
    Let $k\in \mathbb N \cup \{0\}$ and let $H$ be a homogeneous harmonic polynomial of degree $k+1$ in $\mathbb R^n$. For any multi-index $\alpha$ such that  $|\alpha| \leq k$, $D^\alpha H$ is a homogeneous harmonic polynomial of degree $k+1-|\alpha|$. Hence, by~\cite[Lemma~2.2.1]{AG},  $D^\alpha H$ is orthogonal to any homogeneous polynomial $P$ whose degree does not exceed $k-|\alpha|$. Namely, equation~\eqref{E:orthoA} holds.
\end{proof}

\begin{proof}[Proof of Theorem~\ref{T:3}]   $(i) \Rightarrow (iii)$. For simplicity of notation, we
shall prove this implication  under the assumption that $\Omega$ is the ball   centered at
$0$  with $|\Omega|=1$. Thanks to Remark~\ref{rem-nov5},   it will be clear from the proof that an analogous argument
applies by replacing this ball with a sufficiently small ball contained in domain $\Omega$. 
\\
We first focus on the case when  $m<n+k+1$.  
Fix $k\in\{0,\dots,m-1\}$ and let $H$ be 
a homogenous harmonic polynomial of degree   $h \geq k+1$.
\\ For any nonnegative function $f \in X(0,1)$ such that $\operatorname{supp} f\subset (0,\tfrac 12)$,
define the function
    $w_f \colon \Omega \to\mathbb R$ as
    \begin{equation}\label{E:v}
        w_f(x)=
H(x)\int_{\omega_n|x|^n}^{1}f(s)s^{-m+\frac{m-h}{n}}(s-\omega_n|x|^n)^{m-1}\,\d s \quad \ \text{for $x\in \Omega$.}
    \end{equation}
    In the proof of this implication, the constants involved in the various relations only depend on $n$, $m$, $k$,  $h$ and $H$.
 Clearly,  $\supp w_f \subset B(\tfrac12)$, where $B(r)$ is defined as in~\eqref{fi5} for $r\in (0,1)$.
     Moreover, for any  multi-index
        $\alpha$  such that $|\alpha| \leq k$,  the derivative $D ^\alpha w_f(x)$ is a linear combination of terms of the form
    \begin{equation*}
    (D ^{\alpha_1} H)(x) P(x) g(|x|),
    \end{equation*}
    where $\alpha_1$ is a multi-index satisfying $\alpha_1 \leq \alpha$, $P$ is a homogeneous polynomial of degree $d$, with
     \begin{equation*}
    d \leq |\alpha| - |\alpha_1| \leq k- |\alpha_1|,
     \end{equation*}
    and $g\colon [0,\infty)\to \mathbb R$.
\\
    Fix any $r\in(0,1)$. Then, Lemma~\ref{L:polynomial} tells us that
\begin{align}\label{system}
    \int_{B(r)} (D^{\alpha_1} H)(x) P(x) g(|x|)\, \d x  = \int_{B(r)} (D^{\alpha_1} H)\left(\frac{x}{|x|}\right) P\left(\frac{x}{|x|}\right) |x|^{h-|\alpha_1|+d} g(|x|)\, \d x& \\ \nonumber
     = \int_0^{(\omega_n^{-1}r )^{1/n}} s^{h-|\alpha_1|+d+n-1} g(s)\int_{\mathbb S^{n-1}} D^{\alpha_1} H(\omega)  P(\omega) \, \d \mathcal H^{n-1}(\omega)\, \d s = 0.&
    \end{align}
Therefore,
\begin{align}\label{system1}
    \int_{B(r)} D^\alpha w_f\, \d x=0,
\end{align}
and, according to~\eqref{E:moments}, 
  \begin{equation}\label{E:zero-mean-polynomial}
    {P}^{k}_{B(r)}[w_f]   =0 \quad \ \text{for} \   r\in(0,1).
    \end{equation}
Coupling~\eqref{E:campanato-seminorm} with~\eqref{E:zero-mean-polynomial} tells us   that
\begin{equation}\label{E:campanato-seminorm-of-v}
    |w_f|_{\mathcal L^{k, \varphi(\cdot)}(\Omega)} \ \ge \ \sup_{r\in(0,1)} \, \frac{1}{\varphi(r^{\frac{1}{n}})r^{\frac{k}{n}}} \, \dashint_{B(r)}|w_f|\,\d x.
\end{equation}
We claim that
\begin{equation}\label{E:lower-estimate-for-v}
    \int_{r}^{\frac 12}s\sp{-1+\frac{m- h}{n}} f(s)\,\d s \ \lesssim\  r^{-\frac{h}n} \, \dashint_{B(r)}|w_f|\,\d x  \quad \ \text{for} \ r\in (0,\tfrac 12).
\end{equation}
To verify the claim, fix $r\in (0,\tfrac 12)$ and observe that, since $H$ is a homogeneous harmonic polynomial of degree $h$, there exist $\kappa >0$ and $\sigma >0$ and a set $G\subset \mathbb S^{n-1}$ such that $\mathcal H^{n-1}(G)>\sigma $ and $|H|\geq \kappa$ in $G$.
Let  
$E(r)=\{x\in B(r)\colon x/|x| \in G\}$.
Thereby,
\begin{align*}
    \dashint_{B(r)}|w_f|\,\d x &= \dashint_{B(r)}|H(x)|\int_{\omega_n |x|\sp n}\sp{1} s\sp{-m+\frac{m-h}{n}} f(s)(s-\omega_n |x|\sp n)^{m-1}\,\d s\,\d  x
        \\[1ex]
    &\ge \frac{\kappa}{r}\int_{E(r)}|x|^{h} \int_{\omega_n |x|\sp n}\sp{1} s\sp{-m+\frac{m-h}{n}} f(s) (s-\omega_n |x|\sp n)^{m-1}\,\d s \,\d  x
        \\[1ex]
    &\geq n\omega_n\frac{\kappa \sigma}{r}\int_0^{(\omega_n^{-1}r)^{1/n}}\rho^{h}\int_{\omega_n \rho\sp n}\sp{1}s\sp{-m+\frac{m-h}{n}} f(s) (s-\omega_n \rho\sp n)^{m-1}\,\d s
    \,\rho^{n-1}\,\d  \rho
        \\[1ex]
    &=   \omega_n^{-\frac{h}{n}}\frac{\kappa \sigma}{r} \int_0^{r} {\varrho}^{\frac{h}{n}}\int_{\varrho}\sp{1}  s\sp{-m+\frac{m-h}{n}} f(s)(s-\varrho)^{m-1}\,\d s\,\d \varrho.
\end{align*}
Next, by the Fubini theorem and  the fact that $f$ is supported in $(0,\tfrac12)$, the following chain holds:
\begin{align*}
  \frac{1}{r}\int_0^{r} {\varrho}^{\frac{h}{n}}\int_{\varrho}\sp{1}   s\sp{-m+\frac{m-h}{n}}  f(s) & (s-\varrho)^{m-1}\,\d s\,\d \varrho \\
&   \ge \frac{1}{r}\int_0^{r} {\varrho}^{\frac{h}{n}}\int_{r}\sp{1}s\sp{-m+\frac{m-h}{n}} f(s) (s-\varrho)^{m-1}\,\d s\,\d \varrho
        \\
    &=
    \frac{1}{r}\int_{r}^{1}s\sp{-m+\frac{m-h}{n}} f(s) \int_{0}\sp{r}\varrho^{\frac{h}n}(s-\varrho)^{m-1}\,\d\varrho \, \d  s
       \\[1ex]
    &   \geq  \frac{1}{r} \int_{r}^{1}s\sp{-m+\frac{m-h}{n}} f(s) \int_{0}\sp{\frac r2}\varrho^{\frac{h}n}(s-\varrho)^{m-1}\,\d\varrho \,\d s
      \\[1ex]
    &\gtrsim\frac{1}{r}\int_{r}^{1}s\sp{-1+\frac{m-h}{n}}  f(s) \int_{0}\sp{\frac r2}\varrho^{\frac{h}n}\,\d \varrho\,\d s
       \\[1ex]
    &\approx r^{\frac{h}n}\int_{r}^{1} s\sp{-1+\frac{m-h}{n}} f(s) \,\d s.
\end{align*}
 Let us next assume that either $k\leq m-2$ and $h=k+1$, or $k=m-1$ and $h \geq k+2$.
Our aim is to show that $w_f \in W^m_0X(\Omega)$
and
\begin{equation}\label{E:upper-estimate-for-v}
    \|w_f\|_{W^mX(\Omega)} \  \lesssim \  \|f\|_{X(0,1)}.
\end{equation}
We claim that, for every multi-index $\alpha$ such that $0\le|\alpha|\le  m-1$,  
\begin{equation}\label{E:claim-sublimiting}
    \begin{aligned}
|D^{\alpha}w_f(x)| \lesssim  \ \sum_{j=0}^{|\alpha|}& |x|^{nj-|\alpha|+h}\ \times
                \\
            &\times\int_{\omega_n|x|^n}^{1}f(s)s^{-m+\frac{m-h}{n}}(s-\omega_n|x|^n)^{m-j-1}\,\d s \quad \ \mbox{for a.e.} \ x \in
            \Omega,
    \end{aligned}
\end{equation}
whereas, for every $\alpha$ such that $|\alpha|= m$,  
\begin{equation}\label{E:claim-limiting}
    \begin{aligned}
        |D^{\alpha}w_f(x)| \lesssim & \sum_{j\in J} |x|^{nj-m+h}\   \times
            \\
        &\times\int_{\omega_n|x|^n}^{1}f(s)s^{-m+\frac{m-h}{n}}(s-\omega_n|x|^n)^{m-j-1}\,\d s
        +  f(\omega_n|x|^n)  \quad \ \mbox{for a.e.} \ x \in  \Omega,
    \end{aligned}
\end{equation}
 where $J=\{1,2,\dots,m-1\}$ if $k\leq m-2$ and $J=\{0,1,\dots,m-1\}$ if $k=m-1$.
\\ To prove this claim, we begin by observing 
 that, for every multi-index $\alpha$ satisfying $0\le|\alpha|\le m-1$, the derivative $D^{\alpha}w_f$ is a finite linear combination of terms of the form
\begin{equation}\label{E:term}
    x^{\beta}|x|^{a}\int_{\omega_n|x|^n}^{1}f(s)s^{-m+\frac{m-h}{n}}(s-\omega_n|x|^n)^{b}\,\d s,
\end{equation}
where $\beta$ is a multi-index and $a,b$ are nonnegative integers. Also, when $|\alpha|\le  m-2$, the power $b$ appearing in~\eqref{E:term} is strictly positive. The  triple $(\beta,a,b)$ will be called
the \emph{signature} of the   term in~\eqref{E:term} and we set $$\# (\beta,a,b) = |\beta|+a + nb.$$
Thus, the estimate~\eqref{E:claim-sublimiting} will follow if we   show that  each term appearing in the formula for $D ^{\alpha}w_f$,  with signature $(\beta,a,b)$, satisfies the  condition:
\begin{equation}\label{E:term-estimate}
    \#(\beta,a,b) \ge n(m-1)-|\alpha|+ h.
\end{equation}
We shall prove this fact by induction on $|\alpha|$.
If $|\alpha|=0$, then $D^{\alpha}w_f=w_f$, and $D^{\alpha}w_f$ only depends on  terms of the form
\begin{equation*}
    x^\beta\int_{\omega_n|x|^n}^{1}f(s)s^{-m+\frac{m-h}{n}}(s-\omega_n|x|^n)^{m-1}\,\d s,
\end{equation*}
whose signature $(\beta,0,m-1)$ satisfies the constraint $|\beta|=h$.
Since
\begin{equation*}
   \#(\beta,0,m-1) = h + n(m-1),
\end{equation*}
 the condition~\eqref{E:term-estimate} is fulfilled.
\\
Assume that the  condition~\eqref{E:term-estimate} holds for 
any term of the form~\eqref{E:term}
appearing in  $D^{\alpha} w_f$  for every multi-index  $\alpha$ such that $|\alpha|\le j$ for some $j\le m-2$.
Consider    any multi-index  $\gamma$ with  $|\gamma|=j+1$. Plainly, $D^{\gamma}w_f=\frac{\partial}{\partial x_i}(D^{\alpha}w_f)$, for some $i \in \{1, \dots,n\}$ and some multi-index $\alpha$, with $|\alpha|= j$.
Thereby,
\begin{align}\label{E:derivative-formula}
    \frac{\partial}{\partial x_i}\Big(x^{\beta}|x|^{a} \int_{\omega_n|x|^n}^{1}& f(s)s^{-m+\frac{m-h}{n}}(s-\omega_n|x|^n)^{b}\,\d s\Big)
        \\
    &=\  c_1 x^{\beta-e_i}|x|^{a}\int_{\omega_n|x|^n}^{1}f(s)s^{-m+\frac{m-h}{n}}(s-\omega_n|x|^n)^{b}\,\d s
    \nonumber
        \\
    & \qquad + c_2 x^{\beta+e_i}|x|^{a-2}\int_{\omega_n|x|^n}^{1}f(s)s^{-m+\frac{m-h}{n}}(s-\omega_n|x|^n)^{b}\,\d s
    \nonumber
        \\
    &\qquad  + c_3x^{\beta+e_i}|x|^{a+n-2}\int_{\omega_n|x|^n}^{1}f(s)s^{-m+\frac{m-h}{n}}(s-\omega_n|x|^n)^{b-1}\,\d s
    \nonumber
\end{align}
for suitable constants $c_1$, $c_2$ and $c_3$, and for a.e.~$x \in  \Omega$, in which the first summand is considered to be zero if $\beta_i=0$.  In both cases, the new terms have signatures $(\beta-e_i,a,b)$, $(\beta+e_i,a-2,b)$ and $(\beta+e_i,a+n-2,b-1)$, and
$$
      \#(\beta-e_i,a,b) = \#(\beta+e_i,a-2,b) =  \#(\beta+e_i,a+n-2,b-1) = |\beta|+a + nb -1.
$$
Hence, by the induction assumption,
\begin{align*}
     |\beta|+a + nb   \geq n(m-1)-(|\alpha|+1)+ h = n(m-1)- |\gamma| + h,
\end{align*}
namely, the condition~\eqref{E:term-estimate} holds for  each term appearing in $D^{\alpha}w_f$  with signature $(\beta,a,b)$, when $0\le|\alpha|\le m-1$. This establishes~\eqref{E:claim-sublimiting}.
\\ The estimate~\eqref{E:claim-limiting} can be proved similarly,   modulo the following two minor modifications. First, the exponent $b$ can be $0$ on 
 the left-hand side of~\eqref{E:derivative-formula}. In this case, the differentiation with respect to $x_i$ yields the first two terms on the right-hand side of~\eqref{E:derivative-formula} plus a third which is a multiple of 
\[
x^{\beta+e_i} |x|^{a+n+m-mn-h-2} f(\omega_n|x|^n).
\]
By the induction assumption~\eqref{E:term-estimate} applied with $|\alpha|=m-1$ and $b=0$, we obtain $|\beta|+a+m+n-mn-h-1 \geq 0$, as desired. Second,  when $k\leq m-2$ the term associated with $j=0$ does not appear on the right-hand side of the inequality~\eqref{E:claim-limiting}. Indeed, this term  has the form ~\eqref{E:term} with $b=m-1$. Such a term cannot be obtained by differentiating the function $w_f$ in~\eqref{E:v} $m$-times, since the degree of the polynomial $H$ is  less than $m$, and hence any $m$-th order derivative of $w_f$ involves at least one derivative of the integral in~\eqref{E:v}. As a consequence,  $b\leq m-2$ in this case.
\\
Now, the estimates~\eqref{E:claim-sublimiting} and~\eqref{E:claim-limiting} tell us that
   \begin{align}\label{E:nabla-m}
        |\nabla^{m}&w_f(x)| \\ &\lesssim \chi_{B(\frac12)}(x)   \left(\sum_{j\in J} |x|^{nj-m+h}\int_{\omega_n|x|^n}^{1}f(s)s^{-m+\frac{m-h}{n}}(s-\omega_n|x|^n)^{m-j-1}\,\d s
        + f(\omega_n|x|^n)\right) \nonumber
            \\ 
        &\le \chi_{B(\frac12)}(x)   \left(\sum_{j\in J} |x|^{nj-m+h}\int_{\omega_n|x|^n}^{1}f(s)s^{-j-1+\frac{m-h}{n}}\,\d s
        +  f(\omega_n|x|^n)\right) \quad \ \mbox{for a.e.} \ x \in \Omega. \nonumber
        \end{align}
       If $k\leq m-2$, then from equation \eqref{E:nabla-m} we deduce that
        \begin{align*}
        |\nabla^{m}w_f(x)|&\lesssim \chi_{B(\frac12)}(x)  \left(|x|^{n-m+k+1}\int_{\omega_n|x|^n}^{1}f(s)s^{-2+\frac{m-(k+1)}{n}}\,\d s
        +  f(\omega_n|x|^n) \right)
            \\
        &\lesssim \chi_{B(\frac12)}(x)  \left( (\omega_n|x|^n)^{1-\frac{m-(k+1)}{n}}\int_{\omega_n|x|^n}^{1}f(s)s^{-2+\frac{m-(k+1)}{n}}\,\d s
        +  f(\omega_n|x|^n)\right)   
    \end{align*}  for a.e.~$x \in \Omega$.
    Therefore, 
    \begin{align*}
\|\nabla^{m}w_f\|_{X(\Omega)} &\lesssim \left( \left\|r^{1-\frac{m-(k+1)}{n}}\int_{r}^{1}f(s)s^{-2+\frac{m-(k+1)}{n}}\,\d s\right\|_{X(0,1)}
        + \|f\|_{X(0,1)} \right).
    \end{align*}
 On the other hand, if $k=m-1$,   then equation \eqref{E:nabla-m} implies that
 \begin{align*}
|\nabla^{m}w_f(x)|&\lesssim   \chi_{B(\frac12)}(x)  \left(|x|^{-m+h}\int_{\omega_n|x|^n}^{1}f(s)s^{-1+\frac{m-h}{n}}\,\d s
        +  f(\omega_n|x|^n) \right)
            \\
        &\lesssim \chi_{B(\frac12)}(x)  \left( (\omega_n|x|^n)^{-\frac{m-h}{n}}\int_{\omega_n|x|^n}^{1}f(s)s^{-1+\frac{m-h}{n}}\,\d s
        +  f(\omega_n|x|^n)\right) \quad \ \mbox{for a.e.} \ x \in \Omega.
    \end{align*}
  An application  of Lemma~\ref{L:interpolation-lemma},  with $\gamma=\frac{m-(k+1)}{n}$ in the former case, and  with $\gamma=\frac{m-h+n}{n}$ in the latter case tells us that  
    \begin{align*}
\|\nabla^{m}w_f\|_{X(\Omega)}   \, \lesssim  \, \|f\|_{X(0,1)}.
    \end{align*}
   Thanks to~\eqref{Equ-Sob-norms}, this shows that $w_f \in W^m_0X(\Omega)$ and yields~\eqref{E:upper-estimate-for-v}.
\\
    Assume that  $k\in\{0,\dots,m-2\}$. From equations~\eqref{n.assoc.},~\eqref{E:lower-estimate-for-v},~\eqref{E:upper-estimate-for-v}, and~\eqref{E:campanato-seminorm-of-v} one can deduce that
    \begin{align*}
    &\sup_{r\in\left(0,\frac{1}{2}\right)}\frac{r^{\frac{1}{n}}}{\varphi(r^{\frac{1}{n}})}\left\|s^{-1+\frac{m-(k+1)}{n}}\chi_{\left(r,\frac 12\right)}(s) \right\|_{X'(0,1)}
            \\
        &\qquad =\sup_{r\in\left(0,\frac{1}{2}\right)}\frac{r^{\frac{1}{n}}}{\varphi(r^{\frac{1}{n}})}\sup\left\{\int_{r}^{\frac 12}s^{-1+\frac{m-(k+1)}{n}} f(s)\,\d s \colon f \in X_+(0,1),\ \supp f\subset (0,\tfrac12),\ \|f\|_{X(0,1)}\le1\right\}
            \\
        &\qquad \lesssim\sup\left\{\sup_{r\in\left(0,\frac{1}{2}\right)}\frac{1}{\varphi(r^{\frac{1}{n}})r^{\frac{k}{n}}}\dashint_{B(r)}|w_f|\,\d x \colon f \in X_+(0,1),\ \supp f\subset  (0,\tfrac12 ),\ \|f\|_{X(0,1)}\le1\right\}
            \\
        &\qquad \lesssim\sup\left\{\sup_{r\in\left(0,\frac{1}{2}\right)}\frac{1}{\varphi(r^{\frac{1}{n}})r^{\frac{k}{n}}}\dashint_{B(r)}|u|\,\d x\colon \|u\|_{W^mX(\Omega)}\le1\right\}
            \\
        &\qquad \lesssim\sup\left\{|u|_{\mathcal L^{k, \varphi(\cdot)}(\Omega)}\colon \|u\|_{W^mX(\Omega)}\le1\right\}.
    \end{align*}  
 An application of Lemma~\ref{L:standard} with $\theta(r)= {r^{-\frac{1}{n}}}{\varphi(r^{\frac{1}{n}})}$ and $\eta(r)=\left\|s^{-1+\frac{m-(k+1)}{n}}\chi_{\left(r,\frac 12\right)}(s) \right\|_{X'(0,1)}$ yields
    \begin{equation*}
\sup_{r\in(0,1)}\frac{r^{\frac{1}{n}}}{\varphi(r^{\frac{1}{n}})}\left\|s^{-1+\frac{m-(k+1)}{n}}\chi_{\left(r,\frac 12\right)}(s) \right\|_{X'(0,1)}
       \  \lesssim \  \sup\left\{|u|_{k, \mathcal L^{\varphi}(\Omega)} \colon \|u\|_{W^mX(\Omega)}\le1\right\}.
    \end{equation*}
    From Lemma~\ref{L:half-interval} we infer that
    \begin{equation*}
\sup_{r\in(0,1)}\frac{r^{\frac{1}{n}}}{\varphi(r^{\frac{1}{n}})}\left\|s^{-1+\frac{m-(k+1)}{n}}\chi_{(r,1)}(s) \right\|_{X'(0,1)}
        \ \lesssim \ \sup\left\{|u|_{\mathcal L^{k, \varphi}(\Omega)}\colon \|u\|_{W^mX(\Omega)}\le1\right\}.
    \end{equation*}
    Assumption {\it (i)} tells us that the supremum on the right-hand side is finite. Hence, equation~\eqref{E:sobolev-to-campanato-condition-subcritical} follows.
\\
  Now let $k=m-1$. 
  Set $n'=\frac n{n-1}$, the H\"older conjugate of $n$. Thanks to equations~\eqref{E:lower-estimate-for-v},~\eqref{E:campanato-seminorm-of-v},~\eqref{E:upper-estimate-for-v} and~\eqref{E:sobolev-to-campanato-embedding} one can deduce that
    \begin{align*}
        \sup_{r\in\left(0,\frac{1}{4}\right)}\frac{1}{\varphi(r^{\frac{1}{n}})r^{\frac{1}{n'}}}\left\|\chi_{(0,r)}\right\|_{X'(0,1)}
        &=\sup_{r\in\left(0,\frac{1}{4}\right)}\frac{r^{-1+\frac{1}{n}}}{\varphi(r^{\frac{1}{n}})}\left\|\chi_{(r,2r)}\right\|_{X'(0,1)}
            \\
        & \approx \sup_{r\in\left(0,\frac{1}{4}\right)}\frac{r^{\frac{1-m+h}{n}}}{\varphi(r^{\frac{1}{n}})}\left\|s^{-1 +\frac{m-h}{n}}\chi_{(r,2r)}(s)\right\|_{X'(0,1)} \\
        &
        \le \sup_{r\in\left(0,\frac{1}{4}\right)}\frac{r^{\frac{1-m+h}{n}}}{\varphi(r^{\frac{1}{n}})}\left\|s^{-1 +\frac{m-h}{n}}\chi_{\left(r,\frac 12\right)}(s)\right\|_{X'(0,1)}
            \\
        & = \sup_{r\in\left(0,\frac{1}{4}\right)}\frac{r^{\frac{1 -m+h}{n}}}{\varphi(r^{\frac{1}{n}})}\sup_{f \in X_+(0,1)}\frac{\int_{r}^{\frac 12}s^{-1+\frac{m-h}{n}} f(s)\,\d s}{\|f\|_{X(0,1)}}
            \\
        &\lesssim \sup_{f \in X_+(0,1)}\sup\limits_{r\in\left(0,\frac{1}{2}\right)}\frac{1}{\varphi(r^{\frac{1}{n}})r^{\frac{m-1}{n}}}\frac{\dashint_{B(r)}|w_f|\,\d x}{\|f\|_{X(0,1)}}
            \\[0.8ex]
        &\lesssim
        \sup_{f \in X_+(0,1)}\frac{|w_f|_{\mathcal L^{m-1, \varphi(\cdot)}(\Omega)}}{\|f\|_{X(0,1)}}
                   \lesssim \sup_{f \in X_+(0,1)}\frac{|w_f|_{\mathcal L^{m-1, \varphi(\cdot)}(\Omega)}}{\|w_f\|_{W^m X(\Omega)}}<\infty.
    \end{align*}
    Lemma~\ref{L:standard} with $\theta(r)= {r^{-\frac{1}{n}}}{\varphi(r^{\frac{1}{n}})}$ and $\eta(r)=r^{-1}{\left\|\chi_{(0,r)}\right\|_{X'(0,1)}}$ (a non-increasing function), and  the assumption~\eqref{E:admissible-new}  enable us to  establish~\eqref{E:sobolev-to-campanato-condition-critical}.
\\
    Next assume that $m\ge n+k+1$. Thus,  $k\in\{0,\dots,m-2\}$, and we only need to prove~\eqref{E:sobolev-to-campanato-condition-subcritical}.
    We claim that
    \begin{equation}\label{E:lower-bound-for-varphi}
        \sup_{r\in(0,1)}\frac{r}{\varphi(r)}<\infty.
    \end{equation}
  To verify this claim, notice that, trivially, any homogeneous harmonic polynomial $H$ of degree $k+1$ belongs to $W^mX(\Omega)$. Since we are assuming that the latter space is embedded into $\mathcal L^{k, \varphi(\cdot)}(\Omega)$, we have that $H\in \mathcal L^{k, \varphi(\cdot)}(\Omega)$ as well. Owing to equation~\eqref{E:orthoA} and to the homogeneity of the polynomial $H$,
$$\int_{B}D^\alpha H \, \d x=0$$
for every multi-index $\alpha$ such that $|\alpha|\leq k$ and   every ball $B$ centered at $0$. Thereby,
\begin{align}
    \label{nov140}
    P^k_{B}[H]=0.
\end{align}
As a consequence,
\begin{align}
    \label{nov141}
    \infty & >
    |H|_{\mathcal L^{k, \varphi(\cdot)}(\Omega)} = \sup_{B\subset \Omega} \, \frac{1}{\varphi(|B|^{\frac{1}{n}})|B|^{\frac{k}{n}}} \, \dashint_{B} \left|H -{P}^{k}_{B}[H]  \right|\,\d x\\  \nonumber & \geq
    \sup_{r\in (0,1)} \, \frac{1}{\varphi(r^{\frac 1n})r^\frac{k}{n}} \, \dashint_{B(r)}|H |\,\d x \ \approx  \ \sup_{r\in (0,1)} \, \frac{r^{k+1}}{\varphi(r)r^k} = \sup_{r\in (0,1)} \, \frac{r}{\varphi(r)},
\end{align}
up to constants depending on $n$, $k$, and $H$.
Property~\eqref{E:lower-bound-for-varphi} is thus established.
\\ Owing to~\eqref{E:lower-bound-for-varphi} and the fact that $-1+\frac{m-(k+1)}{n} \geq 0$, we have that
    \begin{equation*}
        \sup_{r \in (0,1)} \, \frac{r}{\varphi(r)}\left\|s^{-1+\frac{m-(k+1)}{n}}\chi_{{(r^n,1)}}(s)\right\|_{{X'(0,1)}}
       \  \lesssim \ \left\|\chi_{{(0,1)}}\right\|_{{X'(0,1)}}<\infty,
    \end{equation*}
    whence~\eqref{E:sobolev-to-campanato-condition-subcritical} follows.
\\
    $(iii)\Rightarrow(ii)$.  Unless otherwise stated,  in the proof of this implication  the constants involved in the various relations only depend on $n$, $m$, $k$ and $\varphi$. 
        Fix $k\in\{0,\dots,m-1\}$, and let $u \in W^mX(\Omega)$.
        Given a nonempty open ball $B$ such that $\overline B\subset\Omega$, set $r=|B|$.
    Owing to H\"older's inequality if $k\le n-2$, or trivially if $k\ge n-1$, one has that
    \begin{equation*}
        \dashint_{B} \left|u-{P}^{k}_{B}[u] \right|\,\d x \ \le \
            \begin{cases}
                r^{-1+\frac{k+1}{n}}     \left\|u-{P}^{k}_{B}[u]\right\|_{L^{\frac{n}{n-(k+1)}}(B)}   &\quad \text{if $k\le n-2$}
                    \\[1ex]
                  \|u-{P}^{k}_{B}[u]\|_{L^{\infty}(B)}&\quad \text{if $k\ge n-1$.}
            \end{cases}
    \end{equation*}
    This estimate, combined with either~\eqref{E:L1-1} if $k\le n-2$, or~\eqref{E:L1-2} if $k\ge n-1$, yields 
    \begin{equation}\label{E:upper-by-gradient}
        \dashint_{B} \left|u-{P}^{k}_{B}[u] \right|\,\d x \  \lesssim \
            r^{-1+\frac{k+1}{n}} \left\|\nabla^{k+1}u\right\|_{L^1(B)}.
    \end{equation} 
   Assume first that $m<n+k+1$ and $k\leq m-2$. Then, by equation~\eqref{E:upper-by-gradient} and  the inequality~\eqref{E:H}, 
    \begin{equation}\label{E:upper}
        \begin{split}
            \dashint_{B} \left|u-{P}^{k}_{B}[u] \right|\,\d x\  &\lesssim\  r^{-1+\frac{k+1}{n}} \left\|\nabla^{k+1}u\right\|_{X_{m-(k+1)}(\Omega)} \left\|\chi_{B}\right\|_{X'_{m-(k+1)}(\Omega)}
                \\[1ex]
            &  = r^{-1+\frac{k+1}{n}} \left\|\nabla^{k+1}u\right\|_{X_{m-(k+1)}(\Omega)} \left\|\chi_{(0,r)}\right\|_{X'_{m-(k+1)}(0,1)},
        \end{split}
    \end{equation}
    where $X_{m-(k+1)}(\Omega)$ is the norm defined as in 
\eqref{E:eucl_opt_norm-john}, with $m$ replaced with $m-(k+1)$. 
 
  \noindent    Since $|B|=r$, from the inequality~\eqref{E:upper}  one infers that
\begin{equation}\label{E:upper-divided}
        \begin{split}
            \frac{1}{\varphi(|B|^{\frac{1}{n}})|B|^{\frac{k}{n}}} \, \dashint_{B}\left|u-{P}^{k}_{B}[u] \right|\,\d x \
                \lesssim \ \frac{ r^{-1 +\frac{1}{n}}}{\varphi(r^{\frac{1}{n}})} \,\left\|\chi_{(0,r)}\right\|_{X'_{m-(k+1)}(0,1)}\left\|\nabla^{k+1}u\right\|_{X_{m-(k+1)}(\Omega)}.
        \end{split}
    \end{equation}
   Hence,
    \begin{equation}\label{E:upper-sups-prelim}
        |u|_{\mathcal L^{k, \varphi(\cdot)}(\Omega)}
        \ \lesssim \ \sup_{r\in(0,1)} \, \frac{r^{-1 +\frac{1}{n}}}{\varphi(r^{\frac{1}{n}})} \,\left\|\chi_{(0,r)}\right\|_{X'_{m-(k+1)}(0,1)}\left\|\nabla^{k+1}u\right\|_{X_{m-(k+1)}(\Omega)}.
    \end{equation}
        By Lemma~\ref{L:standard}, applied with $\theta(r)= {r^{-\frac{1}{n}}}{\varphi(r^{\frac{1}{n}})}$ and the non-increasing function $\eta(r)=r^{-1}{\left\|\chi_{(0,r)}\right\|_{X'(0,1)}}$, 
    \begin{equation*}
        \sup_{r\in(0,1)}\, \frac{r^{-1 +\frac{1}{n}}}{\varphi(r^{\frac{1}{n}})} \left\|\chi_{(0,r)}\right\|_{X'_{m-(k+1)}(0,1)}
       \  \lesssim \  \sup_{r\in(0,\frac 12)}\, \frac{r^{-1 +\frac{1}{n}}}{\varphi(r^{\frac{1}{n}})} \left\|\chi_{(0,r)}\right\|_{X'_{m-(k+1)}(0,1)}.
    \end{equation*} 
    Notice that here we have also made use of the assumption \eqref{E:admissible-new}.
    Coupling the latter inequality with the inequality~\eqref{E:upper-sups-prelim} yields:
    \begin{equation}\label{E:upper-sups}
        |u|_{\mathcal L^{k, \varphi(\cdot)}(\Omega)}
        \ \lesssim \ \sup_{r\in\left(0,\frac{1}{2}\right)}\, \frac{r^{-1 +\frac{1}{n}}}{\varphi(r^{\frac{1}{n}})} \left\|\chi_{(0,r)}\right\|_{X'_{m-(k+1)}(0,1)}\left\|\nabla^{k+1}u\right\|_{X_{m-(k+1)}(\Omega)}.
    \end{equation}
    Thanks to Lemma~\ref{L:interval-norm} applied   with   $\alpha= {n^{-1}}(m-k-1)$,  
     \begin{equation}\label{E:fundamental-estimate}
        \begin{split}
            \left\|\chi_{(0,r)}\right\|_{X'_{m-(k+1)}(0,1)}&=\|s^{\frac{m-k-1}{n}}\chi^{**}_{(0,r)}(s)\|_{X'(0,1)} \ 
            \lesssim \  r \left\|s^{-1+\frac{m-k-1}{n}}  \chi_{(r,1)}(s)\right\|_{X'(0,1)} 
        \end{split}
    \end{equation}
    for   $r\in\left(0,\tfrac{1}{2}\right)$.
    By~\cite[Theorem~6.2]{CPS},
    \begin{equation*}
        W^{m-(k+1)}X(\Omega) \to X_{m-(k+1)}(\Omega).
    \end{equation*}
    Thus,
    \begin{equation}\label{E:iterated-embedding}
        \left\|\nabla^{k+1}u\right\|_{X_{m-(k+1)}(\Omega)} \ \lesssim \ \sum_{j=0}^{m-(k+1)}\|\nabla^j (\nabla^{k+1}u)\|_{X(\Omega)}
       \  = \ \sum_{j=k+1}^{m}\|\nabla^j u \|_{X(\Omega)},
    \end{equation}
    up to a constant    depending  on $m, k, \|\cdot \|_{X(0,1)}$, and $\Omega$. Merging~\eqref{E:fundamental-estimate} and~\eqref{E:iterated-embedding}     into~\eqref{E:upper-sups} results in:
    \begin{align*}
            |u|_{\mathcal L^{k, \varphi(\cdot)}(\Omega)}
            &\lesssim\sup_{r\in\left(0,\frac{1}{2}\right)}\, \frac{r^{\frac{1}{n}}}{\varphi(r^{\frac{1}{n}})}\left\|s^{-1+\frac{m-k-1}{n}}\chi_{(r,1)}(s)\right\|_{X'(0,1)} \sum_{j=k+1}^{m}\|\nabla^j u \|_{X(\Omega)}
                \\
            &\le \sup_{r\in(0,1)}\, \frac{r^{\frac{1}{n}}}{\varphi(r^{\frac{1}{n}})}\left\|s^{-1+\frac{m-k-1}{n}}\chi_{(r,1)}(s)\right\|_{X'(0,1)} \sum_{j=k+1}^{m}\|\nabla^j u \|_{X(\Omega)}
    \end{align*}
    up to a constant    depending   on $n, m, k, \varphi,  \|\cdot \|_{X(0,1)}$,  and $\Omega$.
    Consequently, the   inequality~\eqref{E:sobolev-to-campanato-inequality} follows from \eqref{E:sobolev-to-campanato-condition-subcritical}.
\\  Next,  assume that  $k=m-1$. From equation \eqref{E:upper-by-gradient} and the inequality
\eqref{E:H} we analogously deduce that
    \begin{equation}\label{E:upper-sups-critical}
        |u|_{\mathcal L^{k, \varphi(\cdot)}(\Omega)}
        \ \lesssim\ \sup_{r\in(0,1)}\, \frac{1}{\varphi(r^{\frac{1}{n}})r^{\frac{1}{n'}}} \left\|\chi_{(0,r)}\right\|_{X'(0,1)}\left\|\nabla^{m}u\right\|_{X(\Omega)}.
    \end{equation} 
    Hence, if ~\eqref{E:sobolev-to-campanato-condition-critical} is in force, then inequality~\eqref{E:sobolev-to-campanato-inequality} holds.
\\ 
    Finally, consider the case when $m\ge n+k+1$. Then~\eqref{E:sobolev-to-campanato-condition-subcritical} reduces to~\eqref{E:lower-bound-for-varphi}.
    From~\eqref{E:upper-by-gradient} we obtain that
    \begin{equation*}
        \dashint_{B} \left|u-{P}^{k}_{B}[u] \right| \,\d x \ \lesssim \ r^{-1+\frac{k+1}{n}} \left\|\nabla^{k+1}u\right\|_{L^{\infty}(B)} \left\|\chi_{B}\right\|_{L^1(B)}
               = r^{\frac{k+1}{n}} \left\|\nabla^{k+1}u\right\|_{L^{\infty}(B)}.
    \end{equation*} 
    From the latter inequality and~\eqref{E:lower-bound-for-varphi} one has that
    \begin{equation*}
         \frac{1}{\varphi(|B|^{\frac{1}{n}})|B|^{\frac{k}{n}}}\dashint_{B} \left|u-{P}^{k}_{B}[u] \right| \,\d x
        \  \lesssim \ \frac{r^{\frac{1}{n}}}{\varphi(r^{\frac{1}{n}})}\left\|\nabla^{k+1}u\right\|_{L^{\infty}(B)}
         \lesssim \left\|\nabla^{k+1}u\right\|_{L^{\infty}(B)}.
    \end{equation*}
    As a consequence,
    \begin{equation*}
         |u|_{\mathcal L^{k, \varphi(\cdot)}(\Omega)}
         \ \lesssim \ \left\|\nabla^{k+1}u\right\|_{L^{\infty}(\Omega)}.
    \end{equation*}
     Owing to the assumption $m\ge n+k+1$ and to the second embedding in~\eqref{E:imm},  
    \begin{equation*}
        W^{m-(k+1)}X(\Omega)\to W^{n}L^1(\Omega)\to L^{\infty}(\Omega).
    \end{equation*}
   Hence,
    \begin{equation*}
         |u|_{\mathcal L^{k, \varphi(\cdot)}(\Omega)} \  \lesssim \  \sum_{j=0}^{n}\left\|\nabla^j(\nabla^{k+1}u)\right\|_{L^{1}(\Omega)}
         \le \sum_{j=k+1}^{n+k+1}\left\|\nabla^ju\right\|_{L^{1}(\Omega)}
         \  \lesssim \  \sum_{j=k+1}^{m}\left\|\nabla^ju\right\|_{X(\Omega)},
    \end{equation*}
    namely, equation \eqref{E:sobolev-to-campanato-inequality}.
\\
    $(ii)\Rightarrow(i)$.  This implication is trivial. 
\end{proof}

\begin{proof}[Proof of Theorem~\ref{T:campanato-optimal-range}]  The function  $\overline\varphi$ defined by~\eqref{E:optimal-campanato-range-k} is easily seen to satisfy the condition \eqref{E:admissible-new}.
Moreover, trivially,
    \begin{equation*}
        \sup_{r\in(0,1)}\, \frac{r} {\overline\varphi(r)}\left\|s^{-1+\frac{m-(k+1)}{n}}\chi_{{(r^n,1)}}(s)\right\|_{{X'(0,1)}}=1
        \quad \ \text{if $k\in\{0,\dots,m-2\}$}
    \end{equation*}
    and
    \begin{equation*}
        \sup_{r\in(0,1)}\, \frac{1}{\overline\varphi(r)r^{n-1}}\left\|\chi_{{(0,r^n)}}\right\|_{{X'(0,1)}}=1
        \quad \ \text{if $k=m-1$.}
    \end{equation*}  The embedding~\eqref{E:optimal-campanato-range-embedding} hence follows from Theorem~\ref{T:3}.
\\
  As far as the optimality of the space $\mathcal L^{k, \overline\varphi(\cdot)}(\Omega)$ 
 is concerned, assume that the embedding~\eqref{E:sobolev-to-campanato-embedding}
  holds for some admissible function $\varphi\colon(0,\infty)\to(0,\infty)$. We have to show that
\begin{equation}\label{E:OPT}\mathcal L^{k, \overline\varphi(\cdot)}(\Omega) \to \mathcal L^{k, \varphi(\cdot)}(\Omega).\end{equation}
By Theorem~\ref{T:3}, the embedding~\eqref{E:sobolev-to-campanato-embedding} implies that either $k\in\{0,\dots,m-1\}$ and~\eqref{E:sobolev-to-campanato-condition-subcritical} holds, or $k=m-1$ and~\eqref{E:sobolev-to-campanato-condition-critical} holds.     Hence, according to   the definition of $\overline\varphi$,  in either case  one has that
    \begin{equation*}\label{E:comparison}
       \sup_{r\in(0,1)}\frac{\overline\varphi(r)}{\varphi(r)}<\infty,
    \end{equation*}
    which entails the  embedding~\eqref{E:OPT}.
\end{proof}

\begin{proof}[Proof of Theorem~\ref{T:campanato-optimal-domain}]   Let us first focus on the case when
  $m\leq n+k$. The fact that $\|\cdot\|_{\widehat X(0,1)}$ is a rearrangement-invariant function norm can be readily verified.  Observe that
    \begin{equation*}
        \|\chi_{(0,1)}\|_{\widehat X(0,1)}        
          \  \approx \ \sup\limits_{r\in(0,1)}\frac{r^{\frac{1}{n}}\left(1-r^{\frac{m-(k+1)}{n}}\right)}{\varphi(r^{\frac{1}{n}})} \le \sup\limits_{r\in(0,1)}\frac{r^{\frac{1}{n}}}{\varphi(r^{\frac{1}{n}})}
          \quad \ \text{if} \   k\in\{0,\dots,m-2\} 
                     \end{equation*}
and
    \begin{equation*}        \|\chi_{(0,1)}\|_{\widehat X(0,1)}=\sup\limits_{r\in(0,1)}\frac{r^{\frac{1}{n}}}{\varphi(r^{\frac{1}{n}})}
          \quad \ \text{if} \  k=m-1,
           \end{equation*}    hence property~\eqref{N4}   follows from~\eqref{E:Campanato-condition-on-phi} and \eqref{E:admissible-new}.
           \\
           New, let $\|\cdot\|_{X(0,1)}$ be  any rearrangement-invariant function norm   and set
    \begin{equation}\label{Bk}
    B(k,\varphi) =
        \begin{cases}
            \sup\limits_{r\in(0,1)}\frac{r^{\frac{1}{n}}}{\varphi(r^{\frac{1}{n}})}\left\|s^{-1+\frac{m-(k+1)}{n}}\chi_{{(r,1)}}(s)\right\|_{{X'(0,1)}}
            &\quad \text{if} \ k\in\{0,\dots,m-2\}  
                \\[4ex]
    \sup\limits_{r\in(0,1)}\frac{1}{\varphi(r^{\frac1n})r^{\frac{1}{n'}}}\left\|\chi_{{(0,r)}}\right\|_{{X'(0,1)}}
            &\quad \text{if} \  k=m-1.
         \end{cases}
    \end{equation}
    We claim that
    \begin{equation}\label{E:star-equivalence}
        B(k,\varphi) \ \approx \ \sup_{\|f\|_{X(0,1)}\le1}\|f\|_{\widehat X(0,1)} 
    \end{equation}  for every $k\in\{0,\dots,m-1\}$. Here, and in the rest of this proof, the equivalence holds up to 
      constants   depending possibly   on $n,m,k$ and $\varphi$.
     \\
 To verify equation~\eqref{Bk}, fix $r\in \left(0,\frac14\right)$.     If $k\in\{0,\dots,m-2\}$, then   
 thanks to~\eqref{n.assoc.2}  and Lemma~\ref{L:double-integral} (applied with $\alpha= {n^{-1}}(m-k-1)$), the following chain holds:
    \begin{align*}
        \|s^{-1+\frac{m-(k+1)}{n}}\chi_{(r,1)}(s)\|_{X'(0,1)} &= \sup_{\|f\|_{X(0,1)}\le1} \int_{0}^{1}\left[(\cdot)^{-1+\frac{m-(k+1)}{n}}\chi_{(r,1)}(\cdot)\right]^*(s) f^{*}(s)\, \d s
            \\
        &= \sup_{\|f\|_{X(0,1)}\le1} \int_{0}^{1}(r+s)^{-1+\frac{m-(k+1)}{n}} \chi_{(0,1-r)}(s) f^{*}(s)\, \d s
            \\
        &\approx \sup_{\|f\|_{X(0,1)}\le1}\left(r^{-1+\frac{m-(k+1)}{n}}\int_{0}^{r}f^{*}(s)\,\d s+\int_{r}^{1-r}s^{-1+\frac{m-(k+1)}{n}}f^{*}(s)\,\d s\right)
            \\
        & \approx \sup_{\|f\|_{X(0,1)}\le1}\int_{r}^{1}s^{-1+\frac{m-(k+1)}{n}}f^{**}(s)\, \d s .
    \end{align*} 
   If $k=m-1$, it follows   from~\eqref{n.assoc.2} that
    \begin{equation*}
    \left\|\chi_{(0,r)}\right\|_{X'(0,1)} =  \sup_{\|f\|_{X(0,1)}\le1}\int_{0}^{r}f^{*}(s)\, \d s.
    \end{equation*}
    Therefore,
    \begin{equation*}         \sup_{r\in\left(0,\frac14\right)}\frac{r^{\frac{1}{n}}}{\varphi(r^{\frac{1}{n}})}\left\|s^{-1+\frac{m-(k+1)}{n}}\chi_{(r,1)}(s)\right\|_{X'(0,1)} \ \approx \ \sup_{\|f\|_{X(0,1)}\le1}\sup_{r\in\left(0,\frac14\right)}\frac{r^{\frac{1}{n}}}{\varphi(r^{\frac{1}{n}})}\int_{r}^{1}s^{-1+\frac{m-(k+1)}{n}}f^{**}(s)\, \d s
    \end{equation*}
    if $k\in\{0,\dots,m-2\}$, and
    \begin{equation*}
    \sup_{r\in\left(0,\frac14\right)}\, \frac{1}{\varphi(r^{\frac1n})r^{\frac{1}{n'}}}\left\|\chi_{(0,r)}\right\|_{X'(0,1)} = \sup_{\|f\|_{X(0,1)}\le1}\sup_{r\in\left(0,\frac14\right)}\, \frac{r^{\frac{1}{n}}}{\varphi(r^{\frac1n})}f^{**}(r)
    \end{equation*}
    if $k=m-1$. Hence    one can conclude via  Lemma~\ref{L:standard} that property~\eqref{E:star-equivalence} holds.
       \\
    The choice $\|\cdot\|_{X(0,1)}=\|\cdot\|_{\widehat X(0,1)}$ in~\eqref{E:star-equivalence} yields, via an application of  Theorem~\ref{T:3}, that the  embedding~\eqref{E:optimal-campanato-domain-embedding} holds.  
      Moreover, if the embedding~\eqref{E:sobolev-to-campanato-embedding}
  holds for some rearrangement-invariant function norm $\|\cdot\|_{X(0,1)}$, then Theorem~\ref{T:3} tells us that $B(k,\varphi)<\infty$ and, according to~\eqref{E:star-equivalence}, 
    \begin{equation*}
        \sup_{\|f\|_{X(0,1)}\le1} \|f\|_{\widehat X(0,1)}<\infty.
    \end{equation*}
   Thus,   $X (0,1) \to \widehat X (0,1)$, and this proves that  $\widehat X(\Omega)$ is the optimal (largest possible) rearrangement-invariant space in~\eqref{E:optimal-campanato-domain-embedding}.
\\ 
As for the case $m\ge n+k+1$,  thanks to Theorem~\ref{T:3},
    \begin{equation*}
        W^mL^{1}(\Omega) \to \mathcal L^{k, \varphi_1(\cdot)}(\Omega),
    \end{equation*}
     where $\varphi_{1}(r)=r$ for $r\in(0,\infty)$. If $\varphi$ is an admissible function  fulfilling~\eqref{E:Campanato-condition-on-phi}, then      clearly   $\varphi_1 \lesssim \varphi$ on $(0,r_0)$, for some $r_0 \in (0,1)$.  Consequently, $\mathcal L^{k, \varphi_1(\cdot)}(\Omega)\to\mathcal L^{k, \varphi(\cdot)}(\Omega)$, whence
    \begin{equation*}
        W^mL^{1}(\Omega) \to \mathcal L^{k, \varphi(\cdot)}(\Omega).
    \end{equation*}
    The optimality of $L^1(\Omega)$  in~\eqref{E:optimal-campanato-domain-embedding} follows from the second embedding in~\eqref{E:imm}.
\\
Finally, assume that the condition~\eqref{E:Campanato-condition-on-phi} is not satisfied.   Let $\|\cdot\|_{X(0,1)}=\|\cdot\|_{L^{\infty}(0,1)}$. Then $B(k,\varphi)=\infty$, and from Theorem~\ref{T:3} and \eqref{Bk} one infers that the embedding
    \begin{equation*}
        W^mL^{\infty}(\Omega) \to \mathcal L^{k, \varphi(\cdot)}(\Omega)
    \end{equation*}
    does not hold. Hence, the fact that any rearrangement-invariant space $X(\Omega)$  satisfies~\eqref{E:imm} tells us that the embedding~\eqref{E:sobolev-to-campanato-embedding}  fails for every rearrangement-invariant space $X(\Omega)$.  
\end{proof}

\begin{proof}[Proof of Theorem~\ref{T:sobolev-campanato-vanishing}]
  $(i) \Rightarrow (iii)$.   
    Assume first that $m-k\le n$.
    As in the proof of analogous implications above, we assume, for simplicity of notation, that   $\Omega$ is the ball  centered at $0$  with $|\Omega|=1$.
    Fix any  homogeneous  harmonic polynomial $H$ of degree $k+1$. With each  measurable function $f \colon (0, \infty) \to [0, \infty)$, such that  $\supp f \subset \left(0,\tfrac12\right)$, we associate the function $w_f$ defined as in~\eqref{E:v}. Fix $r\in\left(0,\tfrac{1}{4}\right)$ and let $B(r)$ be the ball obeying~\eqref{fi5}.
\\ If  $k\in\{0,\dots,m-2\}$,  then
    \begin{equation}\label{E:SCV1}
        \begin{split}
            &\left\|s^{-1+\frac{m-(k+1)}{n}}\chi_{\left(r,\frac{1}{2}\right)}(s)\right\|_{X'(0,1)}
                \\
            &\qquad \qquad= \sup\left\{\int_{r}^{\frac{1}{2}}s^{-1+\frac{m-(k+1)}{n}}f(s)\, \d s\colon  f\ge0,\ \supp f\subset\left(0,\tfrac12\right),\ \|f\|_{X(0,1)}\le1\right\}.
        \end{split}
    \end{equation}
     Thus, by~\eqref{E:SCV1}, and~\eqref{E:lower-estimate-for-v}, 
    \begin{align*}
         \frac{r^{\frac{1}{n}}}{\varphi(r^{\frac{1}{n}})}&\left\|s^{-1+\frac{m-(k+1)}{n}}\chi_{\left(r,\frac{1}{2}\right)}(s)\right\|_{X'(0,1)}
           \\
         & \lesssim \ \frac{r^{\frac{1}{n}}}{\varphi(r^{\frac{1}{n}})}\sup\left\{r^{-\frac{k+1}{n}}\dashint_{B(r)}|w_f| \, \d x\colon  f\ge0,\ \supp f\subset\left(0,\tfrac12\right),\ \|f\|_{X(0,1)}\le1\right\}
           \\
         & = \sup\left\{\frac{1}{\varphi(r^{\frac{1}{n}})r^{\frac{k}{n}}}\dashint_{B(r)}|w_f|\, \d x\colon  f\ge0,\ \supp f\subset\left(0,\tfrac12\right),\ \|f\|_{X(0,1)}\le1\right\}.
    \end{align*}
 The inequality above holds up to a constant depending only on $n$, $m$, $k$ and $H$. The same dependence holds for the various relations 
 throughout the proof of the current part of the proof. 
\\
    Owing to~\eqref{E:upper-estimate-for-v}  and~\eqref{E:campanato-seminorm-of-v},  
       \begin{align*}
        \frac{r^{\frac{1}{n}}}{\varphi(r^{\frac{1}{n}})}&\left\|s^{-1+\frac{m-(k+1)}{n}}\chi_{\left(r,\frac{1}{2}\right)}(s)\right\|_{X'(0,1)}
          \\
        & \lesssim \sup\left\{\frac{1}{\varphi(r^{\frac{1}{n}})r^{\frac{k}{n}}}\dashint_{B(r)}|w_f|\, \d x\colon  \|w_f\|_{W^mX(\Omega)}\le1\right\}
          \\
        & \lesssim \sup\left\{\sup_{B\subset \Omega,\,|B|\le r}\frac{1}{\varphi(|B|^{\frac{1}{n}})|B|^{\frac{k}{n}}}\dashint_{B} \left|w_f-P^k_{B}[w_f]\right|\, \d x\colon  \|w_f\|_{W^mX(\Omega)}\le1\right\}
          \\
        & = \sup\left\{\varrho_{\varphi,k,w_f}(r)\colon  \|w_f\|_{W^mX(\Omega)}\le1\right\}
          \\
        & \le \sup\left\{\varrho_{\varphi,k,u}(r)\colon  \|u\|_{W^mX(\Omega)}\le1\right\}.
    \end{align*} 
    Thanks to Lemma~\ref{L:half-interval},  
    \begin{equation*}
        \frac{r^{\frac{1}{n}}}{\varphi(r^{\frac{1}{n}})}\left\|s^{-1+\frac{m-(k+1)}{n}}\chi_{(r,1)}(s)\right\|_{X'(0,1)}
           \  \lesssim \ \sup\left\{\varrho_{\varphi,k,u}(r)\colon  \|u\|_{W^mX(\Omega)}\le1\right\}.
    \end{equation*}
    By~\eqref{E:vanishing-full},
    \begin{equation*}
        \lim_{r\to0^+}\sup\left\{\varrho_{\varphi,k,u}(r)\colon  \|u\|_{W^mX(\Omega)}\le1\right\}=0.
    \end{equation*}
    Altogether, equation~\eqref{E:sobolev-to-campanato-vanishing-condition-subcritical} follows.
\\ If $k = m-1$, then, by~\eqref{E:lower-estimate-for-v} and~\eqref{E:upper-estimate-for-v} with $h\geq k+2$,
    \begin{align*}
    \frac{1}{\varphi(r^{\frac1n})r^{\frac{1}{n'}}}\|\chi_{(0,r)}\|_{X'(0,1)} &= \frac{1}{\varphi(r^{\frac1n})r^{\frac{1}{n'}}}\|\chi_{(r,2r)}\|_{X'(0,1)}
    \ \lesssim \ \frac{r^{\frac{1-m+h}{n}}}{\varphi(r^{\frac1n})}\|s^{-1+\frac{m-h}{n}}\chi_{(r,2r)}(s)\|_{X'(0,1)}
        \\
    & \leq \frac{r^{\frac{1-m+h}{n}}}{\varphi(r^{\frac1n})}
        \sup\left\{\int_{r}^{\frac12}f(s) s^{-1+\frac{m-h}{n}}\, \d s\colon  f\ge0,\ \|f\|_{X(0,1)}\le 1,\ \supp f\subset\left(0,\tfrac12\right)\right\}
        \\
    &\lesssim \frac{1}{\varphi(r^{\frac1n})r^{\frac{m-1}{n}}}
        \sup\left\{\dashint_{B(r)}|w_f|\, \d x\colon  f\ge0,\ \|f\|_{X(0,1)}\le 1,\ \supp f\subset\left(0,\tfrac12\right)\right\}
        \\
    &\lesssim \frac{1}{\varphi(r^{\frac1n})r^{\frac{m-1}{n}}}
        \sup\left\{\dashint_{B(r)} \left|u-P^k_{B(r)}[u]\right|\, \d x\colon  \|u\|_{W^mX(\Omega)}\le 1\right\}
        \\
    &\le \sup\left\{\varrho_{\varphi,m-1,u}(r)\colon  \|u\|_{W^mX(\Omega)}\le 1\right\}.
    \end{align*}
    Equation~\eqref{E:vanishing-full} entails that
    \begin{equation*}
        \lim_{r\to0^+}\sup\left\{\varrho_{\varphi,m-1,u}(r)\colon \|u\|_{W^mX(\Omega)}\le 1\right\}=0.
    \end{equation*}
    Consequently,~\eqref{E:sobolev-to-campanato-vanishing-condition-critical} follows.
    \\
    Now assume that $m\ge n+ k+ 1$. We have to verify~\eqref{E:sobolev-to-campanato-vanishing-condition-subcritical}. We  claim that 
    \begin{equation}\label{E:limit-of-the-basic-fraction}
        \lim_{r\to0^+}\frac{r}{\varphi(r)}=0.
    \end{equation}
    Fix any homogeneous harmonic polynomial $H$ of degree $k+1$. Since $H\in W^mX(\Omega)\subset V\!\mathcal L^{k, \varphi(\cdot)}(\Omega)$, via~\eqref{nov140} one infers that
    \begin{equation*}
        \lim_{r\to 0^+}\sup_{B\subset \Omega,|B|\le r}\frac{1}{\varphi(|B|^{\frac{1}{n}})|B|^{\frac{k}{n}}} \dashint_{B}|H|\,\d x=0.
    \end{equation*}
    Therefore,
    \begin{align*}
    0 &=\lim_{r\to 0^+}\sup_{B\subset \Omega,|B|\le r} \, \frac{1}{\varphi(|B|^{\frac{1}{n}})|B|^{\frac{k}{n}}} \, \dashint_{B}|H|\,\d x
        \\  
    & \ge
    \lim_{r\to 0^+}\frac{1}{\varphi(r)r^k}\dashint_{B(r)}|H|\,\d x \approx  \lim_{r\to 0^+}\frac{r^{k+1}}{\varphi(r)r^k} = \lim_{r\to 0^+}\frac{r}{\varphi(r)},
    \end{align*}
    up to constants depending on $n$, $k$ and $H$.  Hence, 
   equation~\eqref{E:limit-of-the-basic-fraction} is established.
    Owing   to~\eqref{E:limit-of-the-basic-fraction} and the fact that $-1+\frac{m-(k+1)}{n}\geq 0$, we have that
    \begin{equation*}
        \lim_{r\to 0^+}\frac{r^{\frac 1n}}{\varphi(r^{\frac1n})}\left\|s^{-1+\frac{m-(k+1)}{n}}\chi_{{(r,1)}}(s)\right\|_{{X'(0,1)}}
        \leq\lim_{r\to 0^+}\frac{r^{\frac 1n}}{\varphi(r^{\frac1n})}\left\|\chi_{{(0,1)}}\right\|_{{X'(0,1)}}=0,
    \end{equation*}
    and~\eqref{E:sobolev-to-campanato-vanishing-condition-subcritical} follows.
\\
    $(iii)\Rightarrow(ii)$. Let $\overline\varphi$ be the function from~\eqref{E:optimal-campanato-range-k} and~\eqref{E:optimal-campanato-range-m-1}. By Theorem~\ref{T:campanato-optimal-range},  
    \begin{equation*}
        W^mX(\Omega)\to \mathcal L^{k, \overline\varphi(\cdot)}(\Omega).
    \end{equation*}
    On the other hand, Theorem~\ref{T:3} ensures that
    \begin{equation*}
        |u|_{\mathcal L^{k, \overline\varphi(\cdot)}(\Omega)}\  \lesssim \ \sum_{j=k+1}^{m}\|\nabla^j u \|_{X(\Omega)}
    \end{equation*}
    for $u \in W^mX(\Omega)$.
    Owing to~\eqref{E:campanato-seminorm}, the latter inequality yields
    \begin{equation*}
        \sup_{B\subset\Omega} \, \frac{1}{\overline\varphi(|B|^{\frac{1}{n}})|B|^{\frac{k}{n}}} \, \dashint_{B} \left|u-{P}^{k}_{B}[u] \right|\, \d  x \ \lesssim \  \sum_{j=k+1}^{m}\|\nabla^j u \|_{X(\Omega)}
    \end{equation*}
     for $u \in W^mX(\Omega)$.
    Hence, given any function $u$ such that $\sum_{j=k+1}^{m}\|\nabla^j u \|_{X(\Omega)}\le1$ and any ball $B\subset \Omega$, one has that
    \begin{equation*}
        \frac{1}{|B|^{\frac{k}{n}}} \, \dashint_{B}\left|u-{P}^{k}_{B}[u] \right|\, \d  x \ \lesssim \  \overline\varphi(|B|^{\frac{1}{n}}),
    \end{equation*}
    up to a constant independent of $B$.
    Fix $r\in(0,1)$. Let $k\in\{0,\dots,m-1\}$. Therefore,
    \begin{align}\label{feb200}
   & \sup\left\{\varrho_{\varphi, k, u}(r) \colon \sum_{j=k+1}^m\|\nabla^j u\|_{X(\Omega)}\le 1\right\}
        \\ \nonumber
    &\qquad \qquad \qquad  = \sup\left\{\sup_{B\subset\Omega,\ |B|\le r} \, \frac{1}{\varphi(|B|^{\frac{1}{n}})|B|^{\frac{k}{n}}} \, \dashint_{B}\left|u-{P}^{k}_{B}[u] \right|\, \d  x\colon \sum_{j=k+1}^m\|\nabla^j u\|_{X(\Omega)}\le 1\right\}
        \\ \nonumber 
    &\qquad \qquad \qquad \le \sup_{B\subset\Omega,\ |B|\le r} \,\frac{\overline\varphi(|B|^{\frac1n})}{\varphi(|B|^{\frac1n})} \le \sup_{s\in(0,r)} \, \frac{\overline\varphi(s^{\frac1n})}{\varphi(s^{\frac1n})}.
    \end{align}
    Coupling~\eqref{feb200} with 
   either~\eqref{E:sobolev-to-campanato-vanishing-condition-subcritical} or~\eqref{E:sobolev-to-campanato-vanishing-condition-critical} yields \eqref{E:vanishing-nonfull}.
\\
   $(ii)\Rightarrow(i)$. This implication is trivial.
\end{proof}

\section{Proofs of embeddings for special classes of Sobolev type spaces}\label{special}

We conclude by deriving the embeddings stated in   Example~\ref{BMOex} and Section~\ref{S:ex}  from our general results. The  embedding from  Example~\ref{BMOex} is the subject of the following proposition.

\begin{prop}{\rm\bf{ [Optimal Sobolev domain for BMO]}}\label{P:optimal-domain-for-BMO} Assume that $\Omega$
 be
a bounded John domain
 in $\mathbb R^n$.
    Let $m \in \mathbb N$ be such that $m \leq n$. Then, 
 \begin{equation}\label{E:orlicz-campanato-optimal-range2}
        W^{m}L^{\frac nm,\infty}(\Omega)\to\BMO(\Omega).
    \end{equation}
Moreover, $L^{\frac{n}{m},\infty}(\Omega)$ is the optimal (largest possible) rearrangement-invariant domain space in~\eqref{E:orlicz-campanato-optimal-range2}.
\end{prop}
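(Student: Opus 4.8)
The plan is to read off both assertions directly from Theorem~\ref{T:campanato-optimal-domain}. Note that $\BMO(\Omega)=\mathcal L^{0,\varphi(\cdot)}(\Omega)$ for the constant admissible function $\varphi\equiv1$, which satisfies condition~\eqref{E:Campanato-condition-on-phi} since $\liminf_{r\to0^+}\varphi(r)/r=\liminf_{r\to0^+}1/r=\infty$. Applying Theorem~\ref{T:campanato-optimal-domain} with $k=0$ and this $\varphi$ (and noting that $m\le n=n+k$, so its first alternative is in force) we obtain that $W^{m}\widehat X(\Omega)\to\BMO(\Omega)$ and that $\widehat X(\Omega)$ is the optimal (largest possible) rearrangement-invariant domain in this embedding, where $\|\cdot\|_{\widehat X(0,1)}$ is the function norm in~\eqref{E:optimal-campanato-domain}. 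The whole proposition therefore reduces to showing that $\widehat X(0,1)=L^{\frac nm,\infty}(0,1)$ up to equivalent norms.

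Recall from Subsection~\ref{S:morrey} that the Marcinkiewicz norm of $L^{\frac nm,\infty}(0,1)$ equals $\sup_{t\in(0,1)}t^{m/n}f^{**}(t)$ (with the case $m=n$ reading as $\sup_t t\,f^{**}(t)=\|f\|_{L^1(0,1)}$). If $m=1$ then $k=m-1$ in~\eqref{E:optimal-campanato-domain}, so $\|f\|_{\widehat X(0,1)}=\sup_r r^{1/n}f^{**}(r)=\|f\|_{L^{n,\infty}(0,1)}$, and we are done. If $2\le m\le n$ then $k=0\in\{0,\dots,m-2\}$ and $\|f\|_{\widehat X(0,1)}=\sup_{r\in(0,1)}r^{1/n}\int_r^1 s^{-1+\frac{m-1}{n}}f^{**}(s)\,\d s$. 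After reducing this supremum to $r\in(0,\tfrac14)$ by Lemma~\ref{L:standard} and applying Lemma~\ref{L:double-integral} with $\alpha=\frac{m-1}{n}\in(0,1)$, and using $\frac1n-1+\frac{m-1}{n}=\frac mn-1$, we get
\[
r^{1/n}\int_r^1 s^{-1+\frac{m-1}{n}}f^{**}(s)\,\d s\ \approx\ r^{m/n}f^{**}(r)+r^{1/n}\!\int_r^{1-r}s^{-1+\frac{m-1}{n}}f^{*}(s)\,\d s .
\]
Upon taking the supremum, the first summand yields $\sup_r r^{m/n}f^{**}(r)=\|f\|_{L^{\frac nm,\infty}(0,1)}$, and it also controls the second: from $f^*(s)\le f^{**}(s)\le s^{-m/n}\|f\|_{L^{\frac nm,\infty}(0,1)}$ one gets $\int_r^{1-r}s^{-1+\frac{m-1}{n}}f^*(s)\,\d s\le\|f\|_{L^{\frac nm,\infty}(0,1)}\int_r^{1}s^{-1-\frac1n}\,\d s\le n\,r^{-1/n}\|f\|_{L^{\frac nm,\infty}(0,1)}$. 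Hence $\|f\|_{\widehat X(0,1)}\approx\|f\|_{L^{\frac nm,\infty}(0,1)}$; when $m=n$ this reads $\widehat X(0,1)=L^1(0,1)$, consistent with the Marcinkiewicz convention $L^{1,\infty}(0,1)=L^1(0,1)$.

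With this identification in hand, both the embedding~\eqref{E:orlicz-campanato-optimal-range2} and the optimality of $L^{\frac nm,\infty}(\Omega)$ follow at once from Theorem~\ref{T:campanato-optimal-domain}. The only substantive point is the one-dimensional norm identification above, whose crux is the use of Lemma~\ref{L:double-integral} to convert the average $f^{**}$ appearing in~\eqref{E:optimal-campanato-domain} into quantities directly comparable with the Marcinkiewicz norm of $L^{\frac nm,\infty}(0,1)$; the borderline value $m=n$ requires additionally reading $L^{1,\infty}$ as $L^1$.
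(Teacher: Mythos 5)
Your proof is correct and follows the same top-level strategy as the paper: both reduce the proposition to Theorem~\ref{T:campanato-optimal-domain} with $k=0$ and $\varphi\equiv1$, and then to the one-dimensional identification $\widehat X(0,1)=L^{\frac nm,\infty}(0,1)$. The difference is only in how this last identification is established for $2\le m\le n$. The paper bounds $\sup_r r^{1/n}\int_r^1 s^{-1+\frac{m-1}{n}}f^{**}(s)\,\d s$ directly: from above by substituting $f^{**}(s)\le \|f\|_{L^{n/m,\infty}}s^{-m/n}$ and integrating $s^{-1-1/n}$; from below by restricting the integral to $(r,2r)$ and using the monotonicity of $f^{**}$. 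You instead first convert the $f^{**}$-integral to an $f^{*}$-expression via Lemma~\ref{L:double-integral} (with $\alpha=\frac{m-1}{n}$), which cleanly isolates the term $r^{m/n}f^{**}(r)$ and then only requires an upper bound on the remaining $f^*$-integral. Your route is slightly longer but parallels the way Lemma~\ref{L:double-integral} is used elsewhere in the paper (e.g., in Theorems~\ref{T:morrey-optimal-domain} and~\ref{T:campanato-optimal-domain}), whereas the paper's direct estimate is more economical for this particular formula. You also make the case $m=1$ explicit (where $k=0=m-1$ so the second alternative of~\eqref{E:optimal-campanato-domain} applies), which the paper's proof passes over; this is a small but welcome improvement in rigor. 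A minor point you leave implicit, as does the paper: after restricting to $r\in(0,\tfrac14)$ via Lemma~\ref{L:standard}, one should observe that $\sup_{r\in(0,1/4)}r^{m/n}f^{**}(r)\approx\sup_{r\in(0,1)}r^{m/n}f^{**}(r)$, which follows from the monotonicity of $f^{**}$.
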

\begin{proof}[Proof of Proposition~\ref{P:optimal-domain-for-BMO}]
    Set $k=0$ and $\varphi(r)=1$ for $r\in(0,\infty)$. Then $\mathcal L^{k, \varphi(\cdot)}(\Omega)=\BMO(\Omega)$. Note that~\eqref{E:Campanato-condition-on-phi} is satisfied. Thus, by Theorem~\ref{T:campanato-optimal-domain}, the optimal rearrangement-invariant domain space $\widehat X(\Omega)$ in
    \begin{equation*}
        W^m\widehat X(\Omega)\to\BMO(\Omega)
    \end{equation*}
    is associated with the rearrangement-invariant function norm given by
    \begin{equation*}
        \|f\|_{\widehat X(0,1)}=\sup_{r\in(0,1)}r^{\frac{1}{n}}\int_{r}^{1}s^{-1+\frac{m-1}{n}}f^{**}(s)\,\d s
    \end{equation*} 
    for $f \in L^0_+(0,1)$.
    Thus, it suffices   to
   show that
    \begin{equation}\label{E:domain-for-bmo-equivalence}
        \sup_{r\in(0,1)}r^{\frac{1}{n}}\int_{r}^{1}s^{-1+\frac{m-1}{n}}f^{**}(s)\,\d s\   \approx \ \sup_{r\in(0,1)}r^{\frac{m}{n}}f^{**}(r)
    \end{equation}
    with equivalence constants    independent on $f$.
\\
    Given $f \in L^0_+(0,1)$,  
       one has that
    \begin{equation}\label{feb202}
        \sup_{r\in(0,1)}r^{\frac{1}{n}}\int_{r}^{1}s^{-1+\frac{m-1}{n}}f^{**}(s)\,\d s
        \le \sup_{r\in(0,1)}r^{\frac{m}{n}}f^{**}(r) \sup_{r\in(0,1)}r^{\frac{1}{n}}\int_{r}^{1}s^{-1-\frac{1}{n}}\,\d s \   \approx \ \sup_{r\in(0,1)}r^{\frac{m}{n}}f^{**}(r)
    \end{equation}
    with equivalence constants    depending only on $n$. Conversely,
    \begin{equation*}\label{feb201}
        r^{\frac{1}{n}}\int_{r}^{1}s^{-1+\frac{m-1}{n}}f^{**}(s)\,\d s
        \ge r^{\frac{1}{n}}\int_{r}^{2r}s^{-1+\frac{m-1}{n}}f^{**}(s)\,\d s
        \ge r^{\frac{1}{n}} f^{**}( 2r)\int_{r}^{2r}s^{-1+\frac{m-1}{n}}\,\d s
       \   \approx \ r^{\frac{m}{n}}f^{**}(2r)
    \end{equation*}
    for $r\in \left(0,\frac12\right)$,
    with equivalence constants    depending only on $n$. Since
    \begin{equation*}
\sup_{r\in\left(0,\frac12\right)}r^{\frac{m}{n}}f^{**}(2r)\   \approx \ \sup_{r\in(0,1)}r^{\frac{m}{n}}f^{**}(r),
    \end{equation*}
    with equivalence constants   depending only on $m$ and $n$, the reverse inequality in~\eqref{feb202} follows. 
    This establishes~\eqref{E:domain-for-bmo-equivalence} and completes the proof.
\end{proof}

\begin{proof}[Proof of Theorem~\ref{T:orlicz-morrey-optimal-range}] The result follows via Theorem \ref{T:morrey-optimal-range}, since~\cite[Lemma~2]{Cia:96} tells us that
$$  
    \|s^{-1+\frac{m}{n}}\chi_{(r^n,1)}(s) \|_{L^{\widetilde A}(0,1)} \  \approx \  \frac{1}{ r^{ n-m }E_{m}^{-1}\left(r^{-n}\right)} \quad \ \text{near} \ 0.
    $$
\end{proof}

\begin{proof}[Proof of Theorem~\ref{P:orlicz-campanato-optimal-range}]
    The assertion follows   from Theorem~\ref{T:campanato-optimal-range}. Specifically, if    $k\in\{0,\dots,m-2\}$, one has to  make use of equation~\eqref{B.7}
    and~\cite[Lemma~2]{Cia:96} to deduce that
    \begin{align}\label{E:Orlicz-optCrange1}
        &\|s^{-1+\frac{m-k-1}{n}}\chi_{(r^n,1)}(s) \|_{L^{\widetilde A}(0,1)} \  \approx \  \frac{1}{ r^{ m-k-n }E_{m,k}^{-1}\left(r^{-n}\right)}\quad \ \text{near} \ 0.
        \end{align}
    If    $k=m-1$, one can exploit the fact that, by equation \eqref{young with conj},
        \begin{align}
        \label{E:Orlicz-optCrange2}
        \|\chi_{(0,r^n)}\|_{L^{\widetilde A}(0,1)} \   \approx \ \frac{1}{\widetilde A^{-1}\left(r^{-n}\right)} \ \approx \ r^{n}A^{-1}(r^{-n}) \quad \ \text{near} \ 0.
    \end{align}
\end{proof}

 \par\noindent {\bf Data availability statement.} Data sharing not applicable to this article as no datasets were generated or analysed during the current study.

\section*{Compliance with Ethical Standards}\label{conflicts}

\smallskip
\par\noindent
{\bf Funding}. This research was partly funded by:
\\ (i)\ Research Project   of the University of Salerno
  \lq \lq Regularity for PDEs and  functional methods'',
grant number 300396FRB22SOFTO  (P.~Cavaliere);
\\ (ii)\  GNAMPA   of the Italian INdAM - National Institute of High Mathematics (grant number not available)  (P.~Cavaliere, A.~Cianchi);
\\ (iii)\ Research Project   of the Italian Ministry of Education, University and
Research (MIUR) Prin 2017 \lq \lq Direct and inverse problems for partial differential equations: theoretical aspects and applications'',
grant number~201758MTR2 (A.~Cianchi);
\\ (iv)\ Research Project   of the Italian Ministry of Education, University and
Research (MIUR) Prin~2022 \lq \lq Partial differential equations and related geometric-functional inequalities'',
grant number 20229M52AS, cofunded by~PNRR (A.~Cianchi);
\\  (v)\  Grant number~23-04720S of the Czech Science
Foundation  (L.~Pick, L.~Slav\'ikov\'a);
\\ (vi)\  Primus research programme PRIMUS/21/SCI/002 of Charles University (L.~Slav\'ikov\'a);
\\ (vii)\  Charles University Research Centre program number~UNCE/24/SCI/005 (L.~Slav\'ikov\'a).

\bigskip
\par\noindent
{\bf Conflict of Interest}. The authors declare that they have no conflict of interest.
 
\bigskip


\begin{thebibliography}{99}


   
\bibitem{AX}
\by{\name{D.R.}{Adams}, \name{J.}{Xiao}} \paper{Morrey spaces in harmonic analysis} \jour{Ark.~Mat.} \yr{2012} \vol{50} 
\pages{201--230}
\endpaper
 

 
 \bibitem{AF}
\by{\name{R.A.}{Adams}, \name{J.J.F.}{Fournier}} \book{Sobolev spaces. Second edition} \publ{Elsevier/Academic Press, Amsterdam, 2003}
\endbook


 
\bibitem{AlS}
\by{\name{A.}{Almeida}, \name{S.}{Samko}} \paper{Approximation in Morrey spaces} 
\jour{J.~Funct.~Anal.} \yr{2017} \vol{272}
\pages{2392--2411}
\endpaper
 
 

 
\bibitem{AlS2}
\by{\name{A.}{Almeida}, \name{S.}{Samko}} \paper{Approximation in generalized Morrey spaces} \jour{Georgian Math.~J.} \yr{2018} \vol{25}
\pages{155--168}
\endpaper

 \bibitem{AG}
\by{\name{D.H.}{Armitage}, \name{S.J.}{Gardiner}} \book{Classical potential theory} \publ{Springer-Verlag London, Ltd., London, 2001}
\endbook


 
\bibitem{BDVS}
\by{\name{C.}{Bennett}, \name{R.A.}{DeVore}, \name{R.}{Sharpley}} \paper{Weak-$L^\infty$ and BMO} \jour{Ann.~Math.~(2)} \yr{1981} \vol{113}
\pages{601--611}
\endpaper

 
\bibitem{BS}
\by{\name{C.}{Bennett}, \name{R.}{Sharpley}} \book{Interpolation of
Operators} \publ{Academic Press, Inc., Boston, MA, 1988}  \endbook


 
\bibitem{BCDS}
\by{\name{D.}{Breit}, \name{A.}{Cianchi}, \name{L.}{Diening}, \name{S.}{Schwarzacher}} \paper{Global Schauder estimates for the $p$-Laplace system} \jour{Arch.~Ration.~Mech.~Anal.} \yr{2022} \vol{243}
\pages{201--255}
\endpaper

  \bibitem{Bur:24}
\by{\name{A.}{Burchard}, \name{G.}{Dafni}, \name{R.}{Gibara}}
\paper{Mean oscillation bounds on rearrangements} 
\jour{Trans.~Amer.~Math.~Soc.} 
\vol{375}
\yr{2022} 
\pages{4429--4444} 
\endpaper

 
\bibitem{Bur:22}
\by{\name{A.}{Burchard}, 
\name{G.}{Dafni}, \name{R.}{Gibara}}
\paper{Vanishing mean oscillation and continuity of rearrangements} 
\jour{Adv.~Math.} 
\vol{437}
\yr{2024} 
\pages{Paper No.~109379, 25 pp} 
\endpaper
 


 
\bibitem{BuDa}
\by{\name{A.}{Butaev}, \name{G.}{Dafni}} \paper{Approximation and extension of functions of vanishing mean oscillation} \jour{J.~Geom.~Anal.} \yr{2021} \vol{31}
\pages{6892--6921}
\endpaper

\bibitem{Ca0}
\by{\name{S.}{Campanato}}
\paper{Propriet\`a di h\"{o}lderianit\`a di alcune classi di funzioni} \jour{Ann.~Scuola Norm.~Sup.~Pisa} \yr{1963} \vol{17}
\pages{175--188}
\endpaper


\bibitem{Ca}
\by{\name{S.}{Campanato}}
\paper{Propriet\`a di una famiglia di spazi funzionali} \jour{Ann.~Scuola Norm.~Sup.~Pisa} \yr{1964} \vol{18}
\pages{137--160}
\endpaper
 
\bibitem{CW}
\by{\name{D.}{Chae}, \name{J.}{Wolf}} \paper{Transport equation in generalized Campanato spaces} 
\jour{Rev.~Mat.~Iberoam.} \yr{2023} \vol{39}
\pages{1725--1770}
\endpaper

\bibitem{Cia:96}
\by{\name{A.}{Cianchi}}
\paper{A sharp embedding theorem for Orlicz-Sobolev spaces}
\jour{Indiana Univ.~Math.~J.}
\vol{45}
\yr{1996}
\pages{39--65}
\endpaper

 

\bibitem{CPbmo}
\by{\name{A.}{Cianchi}, \name{L.}{Pick}}
\paper{Sobolev embeddings into BMO, VMO, and $L^{\infty}$} \jour{Ark.~Mat.} \yr{1998} \vol{36}
\pages{317--340}
\endpaper


\bibitem{CPcamp}
\by{\name{A.}{Cianchi}, \name{L.}{Pick}}
\paper{Sobolev embeddings into spaces of Campanato, Morrey, and H\"{o}lder type} \jour{J.~Math.~Anal.~Appl.} \yr{2003} \vol{282}
\pages{128--150}
\endpaper



\bibitem{CPS}
\by{\name{A.}{Cianchi}, \name{L.}{Pick}, \name{L.}{Slav\'{\i}kov\'a}}
\paper{Higher-order Sobolev embeddings and isoperimetric inequalities} \jour{Adv.~Math.} \yr{2015} \vol{273}
\pages{568--650}
\endpaper

\bibitem{Monia1}
\by{\name{A.}{Cianchi}, \name{M.}{Randolfi}}
\paper{On the modulus of continuity of weakly differentiable functions} \jour{Indiana Univ.~Math.~J.} \yr{2011} \vol{60}
\pages{1939--1973}
\endpaper


\bibitem{CiSchw}
\by{\name{A.}{Cianchi}, \name{S.}{Schwarzacher}}
\paper{Potential estimates for the $p$-Laplace system with data in divergence form} \jour{J.~Differential Equations} \yr{2018} \vol{265}
\pages{478--499}
\endpaper
  
 
\bibitem{DaGi}
\by{\name{G.}{Dafni}, \name{R.}{Gibara}} \paper{BMO on shapes and sharp constants} \jour{Contemp.~Math.} \yr{2020} \vol{748}
\pages{1--33}
\endpaper
 



 
\bibitem{DDY}
\by{\name{D.}{Deng}, \name{X.T.}{Duong}, \name{L.}{Yan}} \paper{A characterization of the Morrey-Campanato spaces} \jour{Math.~Z.} \yr{2005} \vol{250}
\pages{641--655}
\endpaper

 
\bibitem{Dev:84}
\by{\name{R.A.}{DeVore}, \name{R.C.}{Sharpley}}
\paper{Maximal functions measuring smoothness} 
\jour{Mem.~Amer.~Math.~Soc.}
\yr{1984}
\vol{47}
\pages{viii+115}
\endpaper

\bibitem{EKP}
\by{\name{D.E.}{Edmunds}, \name{R.}{Kerman}, \name{L.}{Pick}}
\paper{Optimal Sobolev imbeddings involving
rearrangement-invariant quasinorms}
\jour{J.~Funct.~Anal.}
\yr{2000}
\vol{170}
\pages{307--355}
\endpaper


 

 
 \bibitem{HO}
\by{\name{M.}{Hol\'{\i}k}}\paper{Reduction theorems for Sobolev embeddings into the spaces of
              H\"older, Morrey and Campanato type} \jour{Math.~Nachr.} \yr{2016} \vol{289}
\pages{1626--1635}
\endpaper
 


\bibitem{JN}
\by{\name{J.}{John}, \name{L.}{Nirenberg}}
\paper{On functions of bounded mean oscillation} \jour{Comm.~Pure Appl.~Math.} \yr{1961} \vol{14}
\pages{415--426}
\endpaper



\bibitem{KP1}
\by{\name{R.}{Kerman}, \name{L.}{Pick}}
\paper{Optimal Sobolev imbeddings} \jour{Forum Math.} \yr{2006} \vol{18}
\pages{535--570}
\endpaper

 

  \bibitem{Korenobook}
 \by{\name{A.}{Korenovskii}}
  \book{Mean oscillations and equimeasurable rearrangements of functions} \publ{Springer, BerlinUMI, Bologna, 2007}
\endbook


 
 
\bibitem{Man:21}
\by{\name{G.}{Manzo}}
\paper{Some characterizations of a family of spaces defined by means of oscillations} 
\jour{Houston J.~Math.} 
\vol{47}
\yr{2021} 
\pages{907--930} 
\endpaper
  

\bibitem{Lubook}
\by{\name{L.}{Pick}, \name{A.}{Kufner}, \name{O.}{John}, \name{S.}{Fu\v c\'{\i}k}} \book{Function spaces. Vol.~1. Second revised and extended edition} \publ{
Walter de Gruyter \& Co., Berlin, 2013}
\endbook





\bibitem{Samko}
\by{\name{H.}{Rafeiro}, \name{N.}{Samko}, \name{S.}{Samko}}
\paper{Morrey-Campanato spaces: an overview} 
\jour{Oper.~Theory Adv.~Appl.,  Birkhäuser/Springer Basel AG, Basel} \yr{2013} \vol{228}
\pages{293--323}
\endpaper
 


\bibitem{Sp}
\by{\name{S.}{Spanne}}
\paper{Some function spaces defined using the mean oscillation over cubes} 
\jour{Ann.~Sc.~Norm.~Super.~Pisa Cl.~Sci.~(5)} \yr{1965} \vol{19}
\pages{563--608}
\endpaper

 \bibitem{Stbook}
\by{\name{E.}{Stein}} \book{Harmonic analysis: real-variable methods, orthogonality, and oscillatory integrals} 
\publ{Princeton University Press, Princeton, NJ, 1993}
\endbook

 
\bibitem{Trie}
\by{\name{H.}{Triebel}}\book{Theory of Function Spaces.  Vol. {II}}
              \publ{Birkhäuser Verlag, Basel, 1992}
 \endbook

 
 \end{thebibliography}
\end{document}